\begin{document}  
\bibliographystyle{plain}

\newtheorem{thm}{Theorem}[section]
\newtheorem{lem}[thm]{Lemma}
\newtheorem{prop}[thm]{Proposition} 
\newtheorem{conj}[thm]{Conjecture}
\newtheorem{quest}[thm]{Question}

\theoremstyle{definition}
\newtheorem{defn}[thm]{Definition}

\newtheorem*{namedtheorem}{Theorem \ref{thm:main}}

\newcommand{\h}{\ensuremath{\mathbb{H} } }

\newcommand{\Hom}{\ensuremath{\operatorname{Hom} } }
\newcommand{\Isom}{\ensuremath{\operatorname{Isom} } }
\newcommand\E {\ensuremath{\mathcal{E} } }
\newcommand{\ad}{\ensuremath{\operatorname{ad} } }
\newcommand{\Ad}{\ensuremath{\operatorname{Ad} } }
\newcommand{\so}{\ensuremath{\operatorname{\mathfrak{so} } } }
\newcommand{\sech}{\ensuremath{\operatorname{sech} } }

\def\urltilda{\kern -.15em\lower .7ex\hbox{\~{}}\kern .04em}

\numberwithin{equation}{section}
\numberwithin{figure}{section}
\numberwithin{table}{section}

\newcommand{\R}{\ensuremath{\mathbb{R} } }
\newcommand{\skewm}{\ensuremath{\operatorname{skew} } }
\newcommand{\rotm}{\ensuremath{\operatorname{rot} } }
\newcommand{\tr}{\ensuremath{\operatorname{tr} } }
\newcommand{\curl}{\ensuremath{\operatorname{curl} } }
\newcommand{\del}{\nabla }

\renewcommand{\kbldelim}{(}
\renewcommand{\kbrdelim}{)}

\def\lap{\triangle}
\def\grad{{\nabla}}
\def\homeo{{\approx}}
\def\W{{Weitzenb\"ock  }}


\def\bA{\textbf{A}}
\def\bB{\textbf{B}}
\def\bC{\textbf{C}}
\def\bD{\textbf{D}}
\def\bE{\textbf{E}}
\def\bF{\textbf{F}}
\def\bG{\textbf{G}}
\def\bH{\textbf{H}}


\title{Local Rigidity of Hyperbolic Manifolds with Geodesic Boundary}
\author{Steven P. Kerckhoff and Peter A. Storm} \date{\today}

\begin{abstract}
Let $W$ be a compact hyperbolic $n$-manifold with totally geodesic boundary.  We prove that if $n>3$ then the holonomy representation of $\pi_1 (W)$ into the isometry group of hyperbolic $n$-space is infinitesimally rigid.  
\end{abstract}

\thanks{Kerckhoff and Storm were partially supported by NSF grants DMS-0605151 and DMS-0741604, respectively.  Storm also received support from the Roberta and Stanley Bogen Visiting Professorship at Hebrew University. } 
\maketitle

\section*{}\label{sec:introduction}
\setcounter{section}{1} 

Starting with a complete hyperbolic manifold $X$, when is it possible to deform its hyperbolic
structure?  If $X$ is a finite area 
surface, its metric always has nontrivial deformations 
through complete hyperbolic metrics. 
However, in all higher dimensions, the work of Calabi, Weil, and Garland combine to prove that there are no nontrivial 
deformations through complete metrics when $X$ has finite volume \cite{Cal,We,Garland}.  
In search of flexibility, one is led to study infinite volume hyperbolic manifolds.

In dimension $3$ infinite volume hyperbolic manifolds have been studied for many years.
A particularly well-understood class of such manifolds are the convex cocompact ones. 
They are topologically the interior of a compact $3$-manifold with boundary consisting of surfaces of genus at least $2$.  
Convex cocompact hyperbolic $3$-manifolds have large deformation spaces parametrized by conformal structures on their boundary surfaces inherited from the sphere at infinity.
In higher dimensions the situation becomes more mysterious, but one expects to find less flexibility than in dimension $3$.

This article's research began by looking for a large class of infinite volume hyperbolic manifolds, 
present in all dimensions, and rigid in dimensions greater than $3$.  In high dimensions it is difficult 
to construct interesting infinite volume hyperbolic manifolds.  One method is to begin with a closed 
hyperbolic manifold (often constructed by arithmetic tools) that contains an 
embedded totally geodesic hypersurface and 
cut along the hypersurface to obtain a compact hyperbolic manifold $W^n$ with totally geodesic boundary.  
There is a canonical extension of such a structure to a complete, infinite volume hyperbolic 
metric on an open manifold $X^n$ without boundary which is diffeomorphic to the interior of $W^n$. 
This is the class of manifolds we will study.

Associated to any hyperbolic manifold is a representation, called the \emph{holonomy representation}, of its fundamental group
in the group $\Isom (\h^n)$ of isometries of $n$-dimensional hyperbolic space.
When it is a compact hyperbolic manifold $W$ with totally geodesic boundary,
the representation is discrete and faithful.  If we denote the image of 
the representation by
$\Gamma$, then $W = K/\Gamma$ where $K$ is a closed subset of $\h^n$ bounded by a
collection of geodesic hyperplanes that map onto the boundary of $W$.
The quotient $\h^n/\Gamma$ of all of hyperbolic space by $\Gamma$ is a complete, infinite volume hyperbolic $n$-manifold $X$ that we call the extension of $W$.  It is diffeomorphic to the interior of $W$ and contains $W$ as a compact, convex subset. In particular,
$\pi_1 X = \pi_1 W$ and the holonomy representations of $X$ and $W$ are the same.

The holonomy representation $\rho$ of $\pi_1 W$ is uniquely determined, up to conjugation in $\Isom (\h^n)$, by the hyperbolic structure on $W$.  The space $\Hom(\pi_1 W, \Isom (\h^n))$ is topologized by the 
compact-open topology.  The holonomy representation $\rho$ is said to be \emph{locally rigid}  
if a neighborhood of it in $\Hom(\pi_1 W, \Isom (\h^n))$ consists entirely of conjugate 
representations. In this case, we will refer to the hyperbolic manifold $W$ (or $X$) as locally rigid.  Note that nearby representations of $\pi_1 W$ need not correspond to
hyperbolic structures with geodesic boundary.

The main result of this paper is:

\begin{thm} \label{thm:intromain}
Let $W$ be  a compact hyperbolic $n$-manifold with totally geodesic boundary.  If $n>3$ then the holonomy representation of $W$ is locally rigid.
\end{thm}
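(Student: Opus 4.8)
The plan is to deduce local rigidity from an infinitesimal statement and then to establish that statement by a \W argument. By Weil's rigidity theorem it suffices to prove that the holonomy $\rho$ of $W$ is infinitesimally rigid, i.e.\ that $H^1(\pi_1 W;\so(n,1)_{\Ad\rho})=0$, where $\so(n,1)$ carries the adjoint action through $\rho$. Passing to twisted de Rham cohomology, this becomes $H^1(W;\E)$, where $\E$ is the flat $\so(n,1)$-bundle on $W$ attached to $\Ad\rho$ (equivalently the corresponding bundle on the complete extension $X$); recall that $\E$ is self-dual via the Killing form and that $H^0(W;\E)=0$ because $\rho(\pi_1 W)$ is Zariski dense. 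So the goal is to show $H^1(W;\E)=0$.

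The key device is the double $DW = W\cup_{\partial W} W$. Since $\partial W$ is totally geodesic, $DW$ is a \emph{closed} hyperbolic $n$-manifold, and its flat $\so(n,1)$-bundle restricts to $\E$ on each copy of $W$. I would first record the decomposition of $\E|_{\partial W}$: as $\partial W$ is a closed totally geodesic hyperbolic $(n-1)$-manifold with holonomy into $\so(n-1,1)\subset\so(n,1)$, the restriction splits as a flat bundle,
\[
\E|_{\partial W}\;\cong\;\so(n-1,1)_{\Ad}\;\oplus\;\R^{n-1,1},
\]
the adjoint bundle of $\partial W$ together with the flat bundle of the standard $n$-dimensional representation of $\so(n-1,1)$; this is precisely the $\pm1$-eigenbundle decomposition for the boundary reflection acting on $\E|_{\partial W}$. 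Now Mayer--Vietoris for $DW = W\cup_{\partial W} W$, combined with the Calabi--Weil infinitesimal rigidity of the \emph{closed} hyperbolic $n$-manifold $DW$ (valid since $n\geq 3$, so $H^1(DW;\E)=0$), shows that the restriction map $H^1(W;\E)^{\oplus 2}\to H^1(\partial W;\E|_{\partial W})$ is injective; in particular $H^1(W;\E)$ embeds in $H^1(\partial W;\E|_{\partial W})$. Splitting the target via the displayed decomposition, the summand $H^1(\partial W;\so(n-1,1)_{\Ad})$ vanishes by Calabi--Weil applied to the closed hyperbolic $(n-1)$-manifold $\partial W$ --- here one uses $\dim\partial W = n-1\geq 3$ --- so the entire problem is reduced to proving
\[
H^1\!\big(\partial W;\,\R^{n-1,1}\big)=0 .
\]

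This last vanishing is the step I expect to be the main obstacle. It concerns a closed hyperbolic manifold $N=\partial W$ of dimension $m=n-1\geq 3$ with coefficients in the flat standard-representation bundle, which is $TN\oplus\underline{\R}$ as a Riemannian bundle. I would attack it with a \W identity in the style of Matsushima--Murakami and Calabi--Weil: write the flat connection on $TN\oplus\underline{\R}$ as the metric connection plus an explicit symmetric $\mathfrak p$-valued one-form, and expand the twisted Hodge Laplacian on $\R^{n-1,1}$-valued $1$-forms as the rough Laplacian of the metric connection plus a zeroth-order curvature operator assembled from $\operatorname{Ric}_N=-(m-1)$ and the Casimir of the standard representation. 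The point demanding care is that this operator is borderline: the dimension bound is \emph{sharp}, since a hyperbolic surface has $H^1(N;\R^{2,1})$ of dimension $6g-6\neq 0$, so pointwise positivity must fail at $m=2$; one therefore integrates the \W identity over the closed manifold $N$ and performs a careful integration by parts to extract a strictly positive contribution exactly when $m\geq 3$, forcing every twisted harmonic $1$-form to vanish. (One can alternatively organize the whole argument so that the \W computation is carried out directly on $W$ with boundary, producing a single extra boundary integral over $\partial W$; because $\partial W$ is totally geodesic its second fundamental form drops out of that integral, which then reduces to precisely the $(n-1)$-dimensional quantity above. This is where the totally geodesic hypothesis does its work.) Granting the $(n-1)$-dimensional vanishing, the embedding of the previous paragraph forces $H^1(W;\E)=0$, hence the infinitesimal and therefore the local rigidity of $\rho$ whenever $n>3$.
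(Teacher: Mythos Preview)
Your reduction via the double and Mayer--Vietoris is correct up to the point where you claim
\[
H^1\!\big(\partial W;\,\R^{n-1,1}\big)=0,
\]
but this vanishing is \emph{false} in general, and the whole argument collapses there. Writing $M=\partial W$ and $m=n-1$, the group in question is exactly the paper's $H^1(M;\mathcal{S})$, and Theorem~\ref{harmonicbends} computes its harmonic representatives: they are the $TM$-valued $1$-forms that are symmetric, traceless, closed and co-closed. The zeroth-order operator $H=TT^*+T^*T$ in the \W identity for this bundle is only \emph{semi}-definite for every $m\ge 2$: its kernel is precisely the symmetric traceless part of $\Omega^1(M;TM)$, so no amount of integration by parts on the closed manifold $M$ will produce strict positivity. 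Concretely, whenever $M$ contains a closed totally geodesic hypersurface one obtains nonzero classes in $H^1(M;\mathcal{S})$ by bending (Johnson--Millson), and the Gromov--Thurston examples discussed in the introduction show this occurs for boundaries $M$ of compact hyperbolic $n$-manifolds $W$ in every dimension $n\ge 3$. Thus the target of your embedding $H^1(W;\E)\hookrightarrow H^1(M;\E)$ need not be zero, and the argument as written does not conclude.

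What the paper does instead is exactly the alternative you allude to parenthetically but do not carry out: it runs the \W argument on $W$ itself, and the content of the proof is in controlling the boundary integral (\ref{Weitzbdry}). The point is not that the restriction to $M$ vanishes, but that a class in $H^1(W;\E)$ has a harmonic representative $\omega$ of a very particular shape near $\partial W$: it agrees with the pullback $\omega_0$ of the harmonic $\mathcal{S}$-valued form $B$ on $M$ (Proposition~\ref{pullback}), corrected by $d_\E s$ where $s$ solves an elliptic boundary value problem with mixed Dirichlet/Neumann conditions (Section~\ref{sec:bdryvalue}). These boundary conditions force $T^*\omega=0$ and make four of the eight blocks of $d_\E s$ vanish on $M$ (Lemma~\ref{0blocks}); the remaining boundary integrand is $2\langle \bA,\hat\bB\rangle_M$, which vanishes because $\hat\bB$ satisfies $D^*\hat\bB=0$ and $\bA$ is the symmetric part of $Dv$ for a \emph{global} vector field $v$ on $M$. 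None of this is visible from the cohomology of $M$ alone; it genuinely uses that the class extends over $W$.
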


We will actually prove (Theorem \ref{thm:main}) the slightly stronger result that such 
hyperbolic manifolds $W$ are \emph {infinitesimally rigid}, a concept that will be discussed in Section \ref{sec:algebraic preliminaries}.

Theorem \ref{thm:intromain} was known only in a couple special cases prior to this paper.  Initially, 
M. Kapovich observed that Mostow rigidity could be applied to prove an orbifold version of Theorem \ref{thm:intromain} 
for the group generated by reflections in $119$ of the $120$ walls ($3$-cells) of the hyperbolic right-angled 
$120$-cell in $\mathbb{H}^4$.  This group produces an orbifold quotient of $\mathbb{H}^4$ with totally geodesic boundary
in a suitable orbifold sense.  Again starting with the hyperbolic $120$-cell, Aougab and the second author 
verified Theorem \ref{thm:intromain} for a specific group generated by reflections in $96$ walls of the hyperbolic 
right-angled $120$-cell \cite{St13}.  These $96$ walls were chosen so the complementary $24$ walls form a 
maximal collection of pairwise disjoint walls. 

We remark that, at least when $n = 4$, it is essential that the manifolds $W$ 
with totally geodesic boundary be compact.  
Specifically, the hypotheses of Theorem \ref{thm:intromain} cannot be loosened to permit 
$W$ to have finite volume.  With such a hypothesis Theorem \ref{thm:intromain} is false.  This
is due to the $4$-dimensional example constructed in \cite{KS}.  There, the authors
study in detail deformations of reflection groups derived from the $4$-dimensional hyperbolic $24$-cell.
These deformations involved a $4$-dimensional and infinite volume analog of $3$-dimensional hyperbolic Dehn filling. 
The proof of Theorem \ref{thm:intromain} shows that, in a suitable sense, the deformations of \cite{KS} are the only type possible
for $4$-manifolds with totally geodesic boundary.  In particular, we conjecture that this is a purely $4$-dimensional phenomenon and that Theorem \ref{thm:intromain} remains true
in dimensions $\geq 5$ even if $W$ is allowed to be non-compact with finite volume. 
We will comment on this further in Section \ref{sec:conjectures}.

\medskip

One of the central ideas in the proof of Theorem \ref{thm:intromain} is the interaction
between the deformations of the holonomy representation of the $n$-dimensional manifold
$W$ and those of its $(n-1)$-dimensional geodesic boundary.
Starting with a hyperbolic $n$-manifold $W^n$ with totally geodesic boundary $M^{n-1}$,  we
can restrict the holonomy representation $\rho$ of $\pi_1 W$ to the fundamental group
of any component of $M$.  For simplicity of discussion, we 
will assume that $M$ is connected.  We denote the restricted holonomy representation by 
$\rho_M$.  Since $M$ is an $(n-1)$-dimensional hyperbolic manifold it is the quotient
of a geodesic hyperplane which is preserved by the isomorphic image of 
$\pi_1 M$ under $\rho_M$.   Now glue two copies
of $W$ along the boundary to 
obtain a hyperbolic structure on the double of
$W$.  The double is a closed manifold which can't have any nontrivial deformations when $n\geq 3$.  
This implies, in these dimensions,
that there are no nontrivial deformations of the holonomy representation of $W$ 
through hyperbolic structures with totally geodesic boundary, ie. deformations 
where the restriction $\rho_M$ continues to preserve a geodesic hyperplane.

Let $\Gamma$ denote the
image $\rho_M(\pi_1 M)$ in $\Isom(\h^n)$ and consider the $n$-dimensional hyperbolic
manifold $X = \h^n/\Gamma$.  Topologically, $X$ is a product $M \times \R$.  Geometrically, it has infinite volume and
contains a totally geodesic submanifold isometric to $M$ that completely determines its 
geometry.  We will call such an $n$-dimensional hyperbolic manifold \emph{Fuchsian}.  

In all dimensions, Johnson and Millson \cite{JMbending} found Fuchsian hyperbolic
manifolds $X = M \times \R$ with
nontrivial deformations.  These deformations depend on the existence of 
a codimension $1$
totally geodesic submanifold in $M.$  This geodesic submanifold allows one to deform
the holonomy representation of $X$ in $\Isom(\h^n)$ using the process of bending.  (See Example 8.7.3 of \cite{Th} or \cite{JMbending} for an explanation of bending.) On the other hand, specific 
examples of $4$-dimensional Fuchsian manifolds without deformations were discovered by Kapovich \cite{Kap2} and 
Scannell \cite{Scannell}.   

The flexibility or rigidity of an $n$-dimensional Fuchsian manifold $X = M \times \R$ is really a question
about the $(n-1)$-dimensional hyperbolic manifold $M$.
We will say that $M^{n-1}$ is either 
\emph{deformable} or \emph{locally rigid} in $\Isom(\h^n)$ depending on whether or not
its associated Fuchsian manifold $X^n$ has nontrivial deformations.  
Although both Kapovich and Scannell
construct infinite families of rigid examples, it remains unclear
what geometric or topological condition on $M^{n-1}$ would imply local rigidity in $\Isom(\h^n)$,
even when $n=4$.  
This is a very interesting question about which little is known.  We do not address 
it here (though see Theorem \ref{harmonicbends}).

The discussion above implies that, for the holonomy representation $\rho$ of 
a hyperbolic $n$-manifold $W$ with geodesic boundary to have nontrivial deformations, its
$(n-1)$-dimensional boundary $M$ must be deformable in $\Isom(\h^n)$.  
This suggests
the possibility that Theorem \ref{thm:intromain} is true simply because any closed 
$(n-1)$-dimensional hyperbolic manifold $M$ which is the totally geodesic boundary
of an $n$-dimensional manifold $W$ is, in fact, locally rigid in $\Isom(\h^n)$.
However, examples of Gromov and Thurston show this is definitely not the case \cite{GromThur}. 

For each dimension $n>2$ they construct 
an infinite number of closed hyperbolic $n$-manifolds $V$ with the following properties:  
$V$ has a codimension $1$ embedded, totally geodesic submanifold $M$ which itself has a 
codimension $1$ embedded, totally geodesic submanifold $P$ (so $P$ is codimension $2$ in $V$).  
For each $V$, cutting along $M$ provides an $n$-dimensional hyperbolic manifold $W$ with
geodesic boundary $M$.  The existence of the geodesic submanifold $P$ in $M$ implies
that $\rho_M$ is deformable in $\Isom(\h^n)$ by bending.  

This point of view provides another interpretation of Theorem \ref{thm:intromain}.
It says that, for $n\ge 4$, when a closed $(n-1)$-dimensional hyperbolic manifold $M$
is the boundary of a compact $n$-dimensional manifold $W$, even when $M$ is deformable
in $\Isom(\h^n)$, this deformation does not extend to $W$.

\medskip

In dimension $2$ it is easy to construct compact hyperbolic surfaces of any genus with totally geodesic boundary.   Thurston's  hyperbolization theorem for Haken manifolds implies that 
any compact, irreducible $3$-manifold with genus $g \geq 2$ boundary has a hyperbolic 
structure with totally geodesic boundary as long as it contains no nontrivial annuli or tori. 
In higher dimensions it is much more difficult to find explicit examples.  It follows from a 
theorem of Wang \cite{Wa} that in any fixed dimension $n > 3$ the number $N(V)$ of 
(isometry classes of) compact hyperbolic $n$-manifolds with nonempty totally geodesic boundary 
and volume less than $V$ is finite.
(It becomes infinite in dimension $3$.) Nonetheless, $N(V)$ grows faster than $V^{aV}$ for some $a>0$.  
In particular, in \cite{Rat2} Ratcliffe and Tschantz construct an explicit infinite family of compact hyperbolic $4$-manifolds with nonempty totally geodesic boundary that provides such a lower bound.  
For general $n\geq 4$ it is shown in \cite{BGLM} that the number of \emph {closed} hyperbolic $n$-manifolds 
of volume at most $V$ grows at least this fast.  But their examples (which are covering spaces 
of a single example) all contain embedded totally geodesic hypersurfaces along which one 
can cut to obtain a sufficient number of manifolds with totally geodesic boundary.  
(One needs also to bound the number of pieces that become isometric after cutting. 
We will not provide that argument here.)   Thus, while the structures covered by
Theorem \ref{thm:intromain} are not as prevelant as those in lower dimensions, they are,
nonetheless, numerous.

\medskip

A compact hyperbolic $n$-manifold $W$ with totally geodesic boundary has an extension $X$ which is an infinite volume, complete hyperbolic manifold without boundary.  Since 
$\pi_1 X = \pi_1 W$ and the images of the holonomy representations of $X$ and $W$
are the same, Theorem \ref{thm:intromain} can be rephrased in terms of the local
rigidity of the holonomy representation of $X$.  This, in turn, can be described
directly in Riemannian geometric terms.  

The hyperbolic structure on $X$ is convex cocompact, which means that it contains an $n$-dimensional convex, compact submanifold $C$ that is homotopy equivalent to $X$.
(This concept is discussed in more detail in Section \ref{sec:hyperbolic preliminaries}.)
When $X$ is the extension of a hyperbolic manifold $W$ with geodesic boundary, $C$ can 
be taken to equal $W$.  Conversely, if $X$ has such an $n$-dimensional convex, compact submanifold $C$ with totally geodesic boundary, it is the extension of $C$.   We say that
$X$ has \emph {Fuchsian ends} in this case.  Each component of the complement $X-C$ is diffeomorphic 
to the product $\partial C \times (0,\infty)$. The term 
``Fuchsian end'' reflects the fact that each component of $\partial C$ is an $(n-1)$-dimensional 
hyperbolic manifold and hence is determined by a discrete group of isometries of 
$\h^{n-1}.$   (This term also applies to the case when $X$ is actually Fuchsian, but
we are ruling out this case by the assumption that $C$ is $n$-dimensional.)
Thus we have a $1{-}1$ correspondence between 
(non-Fuchsian) hyperbolic manifolds with Fuchsian ends and compact hyperbolic manifolds 
with totally geodesic boundary.  

If $X$ is deformable, any nearby structure will still be convex cocompact.  When 
$n \geq 3$ any nontrival deformation cannot still have Fuchsian ends 
(by local rigidity of the double of $C$).
If $X$ is $3$-dimensional, it has a large deformation space, parametrized by conformal structures on surfaces at infinity (which are diffeomorphic to 
$\partial C$).  However, for $n >3$, quite the opposite is true and there are no nearby non-isometric hyperbolic structures on $X$.  This is the Riemannian version of 
Theorem \ref{thm:intromain} which we state as a separate theorem below.
In Section \ref{sec:hyperbolic preliminaries} the relationship between the two theorems will be discussed in more detail. 

\begin{thm}\label{thm:intro}
Let $X$ be an infinite volume convex cocompact complete hyperbolic $n$-manifold without boundary. 
Assume that $X$ has Fuchsian ends but is not Fuchsian.  If $n>3$ then $X$ has no nontrivial deformations.  Specifically, if $g_n$ is a sequence of complete hyperbolic metrics on $X$ converging to the given metric in the compact-$\mathcal{C}^\infty$ topology, then for sufficiently large $n$ the Riemannian manifolds $(X,g_n)$ are all isometric.
\end{thm}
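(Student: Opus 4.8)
\emph{Proof proposal.} The plan is to reduce the Riemannian statement to infinitesimal rigidity of the holonomy representation, and then to prove that by a Bochner--Weitzenb\"ock argument on the compact convex core whose only non-routine ingredient is the analysis of a boundary term supported on the core's totally geodesic boundary.

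First, I would reduce to a cohomological vanishing. Write $\Gamma = \rho(\pi_1 X)$, so $X = \h^n/\Gamma$, and let $W \subset X$ be the compact convex core, which by hypothesis is an $n$--manifold with totally geodesic boundary $M$ and is aspherical. A sequence $g_m \to g$ of complete hyperbolic metrics on $X$, convergent in the compact--$\mathcal C^\infty$ topology, has holonomy representations $\rho_m \to \rho$ in $\Hom(\pi_1 X, \Isom(\h^n))$, since the developing map varies continuously with the metric. If $\rho$ is locally rigid, then for large $m$ one has $\rho_m = c_m\,\rho\,c_m^{-1}$ with $c_m \to 1$; and since $(X,g_m)$ is complete, its developing map is a diffeomorphism onto $\h^n$, so $(X, g_m)$ is isometric to $\h^n/\rho_m(\pi_1 X) \cong \h^n/\Gamma = (X,g)$. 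Thus it suffices to prove local rigidity of $\rho$, and for that it is enough to show
\[ H^1\bigl(\pi_1 W;\ \so(n,1)_{\Ad\rho}\bigr) \;=\; H^1(W;E) \;=\; 0, \]
where $E = \widetilde W \times_{\Ad\rho} \so(n,1)$ is the flat bundle with fibre the Lie algebra of $\Isom(\h^n)$: this group is the Zariski tangent space to the character variety at $[\rho]$, so its vanishing makes $[\rho]$ an isolated point.

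Next, I would set up Hodge theory for $E$ on $W$ with its hyperbolic metric. As $\so(n,1)$ carries no positive definite $\Ad$--invariant form, I would replace the flat connection by the metric connection $\nabla$ attached to the positive definite fibre metric determined by the hyperbolic structure; $\nabla$ has explicit curvature, algebraic in the metric, and the twisted de Rham complex still computes $H^*(W;E)$. Because $M = \partial W$ is totally geodesic, absolute boundary conditions single out harmonic $E$--valued $1$--forms representing $H^1(W;E)$ while relative ones represent $H^1(W,M;E)$, and Poincar\'e--Lefschetz duality with the invariant form on $\so(n,1)$ gives $H^1(W,M;E) \cong H^{n-1}(W;E)^{*}$, a fact I expect to exploit through the long exact sequence of $(W,M)$. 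One disposes at once of the ``doubling'' directions: if a harmonic representative $\omega$ restricts to an exact form on $M$, then after subtracting a coboundary $\omega|_M = 0$, and reflecting $\omega$ across $M$ produces a closed $E$--valued $1$--form on the closed hyperbolic $n$--manifold $DW = W \cup_M W$; since $n \ge 3$, Calabi--Weil rigidity gives $H^1(DW) = 0$, so $\omega$ is exact and $[\omega] = 0$. Hence the real content lies in the restriction $H^1(W;E) \to H^1(M; E|_M)$, and the theorem says this map vanishes.

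The crux, and the step I expect to be the main obstacle, is the boundary term in the Weitzenb\"ock identity
\[ 0 \;=\; \int_W \bigl(|\nabla\omega|^2 + \langle\mathcal R\,\omega,\omega\rangle\bigr) \;+\; \int_M B(\omega) \]
for a harmonic representative $\omega$. By the Matsushima--Murakami curvature computation for the adjoint bundle on a hyperbolic $n$--manifold, the interior integrand is nonnegative once $n \ge 3$ (this is exactly what forces rigidity in the closed case), so everything reduces to showing $B(\omega) \ge 0$, which then gives $\nabla\omega \equiv 0$ and $\omega \equiv 0$. Total geodesy of $M$ makes its second fundamental form drop out, so $B(\omega)$ involves only $\omega|_M$ and its derivative in the geodesic normal direction. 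Along $M$ the fibre splits $\Ad\rho_M$--equivariantly as $E|_M = \so(n-1,1) \oplus \R^{n-1,1}$ --- the tangential Killing fields of the bounding hyperplane together with the standard module of its normal directions --- and I would decompose $\omega|_M$ accordingly. The $\so(n-1,1)$--part is controlled by the infinitesimal rigidity of the \emph{closed} $(n-1)$--manifold $M$, that is $H^1(M;\so(n-1,1)) = 0$, which holds precisely because $n - 1 \ge 3$: this is the sole source of the hypothesis $n > 3$, and in dimension $3$, where $M$ is a surface with $H^1(M;\so(2,1)) \ne 0$, the term is genuinely uncontrolled --- matching the known flexibility of convex cocompact hyperbolic $3$--manifolds. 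The $\R^{n-1,1}$--part encodes the infinitesimal bending of the Fuchsian ends; here one must rule out a nonzero contribution, that is, show that a genuine bending of the boundary hyperplane cannot be continued into $W$ as an $E$--valued harmonic form with the prescribed boundary behavior. I would approach this by separation of variables for harmonic $E$--valued forms on a Fuchsian end $M \times (0,\infty)$ with metric $dt^2 + \cosh^2(t)\,g_M$ --- determining which asymptotics are compatible both with finiteness of energy and with matching the compact core --- or, alternatively, by iterating the long exact sequence of $(W,M)$ together with the duality above so as to trade the offending boundary data against $H^{n-1}(W;E)$ until both vanish. Granting $B(\omega) \ge 0$, one obtains $H^1(W;E) = 0$, hence by the first step local rigidity of $\rho$ and the claimed isometric stability of $(X,g)$.
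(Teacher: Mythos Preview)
Your high-level outline is right: reduce to $H^1(W;\E)=0$, apply the Weitzenb\"ock formula on the compact core $W$, use rigidity of the closed $(n{-}1)$-manifold $M$ to kill the $\so(n{-}1,1)$-piece of $\omega|_M$, and then confront the boundary term coming from the remaining ``bending'' factor $\mathcal S$ (your $\R^{n-1,1}$). But the step you flag as the crux---handling the bending contribution to the boundary term---is exactly where your plan is not yet a proof, and neither of your two proposed attacks is what actually works.

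First, the difficulty is \emph{not} to show that a bending class in $H^1(M;\mathcal S)$ fails to extend over $W$: by the Gromov--Thurston examples it can be nonzero and can sit in the image of the restriction map, so no long-exact-sequence or duality bookkeeping will make it disappear. Second, separation of variables on the Fuchsian end gives you the model harmonic form $\omega_0$ near $M$, but by itself says nothing about the boundary integrand once you correct $\omega_0$ to a global harmonic form on $W$. Third, standard absolute boundary conditions are not what is used; the paper solves a \emph{custom} elliptic boundary value problem for a correction section $s$ (a canonical lift) with the mixed conditions $h=0$ and $\nabla_n v=0$ on $M$, chosen precisely so that the harmonic representative $\omega=\omega_0+d_\E s$ satisfies $T^*\omega=0$ and so that several blocks of $d_\E s$ vanish on $M$.

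With that representative in hand, the boundary term is not merely $\ge 0$ but is computed \emph{exactly}: it equals $2\langle \bA,\hat\bB\rangle_M$, where $\hat\bB\in\Omega^1(M;TM)$ is the harmonic bending form on $M$ (symmetric, traceless, and $D^*\hat\bB=0$ by the boundary Hodge theory) and $\bA$ is the symmetric part of $Dv$ for the \emph{globally defined} vector field $v=s|_{TM}$. The whole argument then collapses to a single integration by parts on the closed manifold $M$:
\[
\langle \bA,\hat\bB\rangle_M \;=\; \langle Dv,\hat\bB\rangle_M \;=\; \langle v,\,D^*\hat\bB\rangle_M \;=\; 0.
\]
This is the missing idea. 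It requires (i) knowing that the harmonic representative of the bending class on $M$ is symmetric, traceless, and $D$-coclosed, (ii) engineering boundary conditions so that the block pairing reduces to $\langle Dv,\hat\bB\rangle_M$ with $v$ global on $M$, and (iii) verifying that $T^*\omega=0$ so the second piece of the boundary integrand drops out. None of these follows from the absolute Hodge theory, from doubling, or from the long exact sequence you propose.
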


Theorem \ref{thm:intro} can be viewed as a generalization of the local 
rigidity theorems of Calabi and Weil to the infinite volume setting.  It is natural to ask whether 
a generalization of the global rigidity theorem of Mostow holds in this case; in other words, does 
the smooth manifold underlying $X$ support a unique complete hyperbolic structure?  
(See Question \ref{quest:Mostow}.) 
Our methods cannot answer this global question.


\section{Preliminaries}\label{sec:preliminaries}

\subsection{Hyperbolic preliminaries}\label{sec:hyperbolic preliminaries}

In this subsection we will define some useful terms from hyperbolic geometry, including some
already mentioned in the introduction. 
We will use the standard convention that
a \emph{manifold} has empty 
boundary, unless explicitly
stated otherwise.  However, we will occasionally refer to a \emph{manifold without boundary} for
the sake of clarity.

A hyperbolic structure on an $n$-dimensional manifold $X$ determines a developing map
from the universal cover $\widetilde X$ of $X$ to $n$-dimensional hyperbolic space $\h^n$.
It is equivariant with respect to the action of $\pi_1 X$ on $\widetilde X$ and $\Isom(\h^n)$
on $\h^n$, inducing a representation $\rho: \pi_1 X \to \Isom(\h^n)$, called the 
\emph{holonomy representation} of $X$.  The developing map is well-defined up to 
post-multiplication by an element in $\Isom(\h^n)$, and the holonomy representation
is uniquely determined up to conjugation in $\Isom(\h^n)$ by the hyperbolic structure.  
If 
the hyperbolic structure on $X$ is complete then 
the developing map is a diffeomorphism and the holonomy representation is faithful.
In this case we can identify $\widetilde X$ with $\h^n$ by choosing a particular developing
map.  This choice 
determines $\rho$, identifies $\pi_1 X$ with $\Gamma = \rho(\pi_1 X)$, and induces
an isometry between $X$ and $\h^n/\Gamma$.  

Associated to any complete hyperbolic structure on $X$ is its convex core, which we now define.
For more information see \cite{EM}.

\begin{defn}\label{defn:convex core}
Let $X$ be a complete hyperbolic $n$-manifold, inducing an action of $\pi_1 (X)$ on $\h^n$ by isometries.  
Let $\mathcal{S}$ be the set of subsets $Y$ of $\h^n$ satisfying:
\begin{itemize}
\item $Y$ is closed and nonempty.
\item $Y$ is convex.
\item $\gamma \cdot Y = Y$ for all $\gamma \in \pi_1(X)$.
\end{itemize}
The \emph{convex hull} $\operatorname{Hull}(X) \subseteq \h^n$ is defined as the 
intersection $\cap_{Y \in \mathcal{S}} Y$.  The quotient 
\[C(X) := \operatorname{Hull}(X) / \pi_1(X) \subseteq X\]
is the convex core of $X$.
\end{defn}

The convex hull of $X$ is convex, and thus contractible.  Therefore $C(X)$ is homotopy equivalent to $X$.  
If $X$ has finite volume then $\operatorname{Hull}(X) = \h^n$ and $C(X)=X$.  When $X$ has infinite volume, the convex core is typically an $n$-dimensional submanifold with boundary, but, at one extreme, it can be all of $X$, or, 
at the other extreme, a lower dimensional submanifold.  In all cases $X$ is homeomorphic to the gluing
\[ C_{\varepsilon}(X) \cup_{\partial C_{\varepsilon}(X)} \left( \partial C_{\varepsilon} (X) \times [0,\infty) \right), \]
where $C_{\varepsilon}(X)$ is a closed 
$\varepsilon$-neighborhood of $C(X)$ in $X$.

\begin{defn}\label{defn:convex cocompact}
If the convex core of $X$ is compact then $X$ is \emph{convex cocompact}.
\end{defn}

Although we will not need it in this paper, for cultural context we mention that 
$X$ is called geometrically finite if $C_{\varepsilon}(X)$ has finite volume for some $\varepsilon>0$.  This allows 
the image of the holonomy representation to contain parabolic isometries, 
which is not possible when $X$ is convex cocompact.

\begin{defn}\label{defn:Fuchsian}
A complete hyperbolic $n$-manifold $X$ is \emph{Fuchsian} if it has a holonomy representation $\pi_1(X) \to \Isom(\h^n)$ whose image $\Gamma$ lies in a subgroup $\Isom(\mathbb{H}^m)$ for $m<n$.   We assume that the quotient $\h^m/\Gamma$ has finite volume.
\end{defn}

A Fuchsian manifold $X$ 
comes from an $m$-dimensional hyperbolic manifold $M = \h^m/\Gamma_M$ 
whose holonomy image $\rho_M(\pi_1(M)) = \Gamma_M$ is equal to $\Gamma$.  The convex core $C(X) \subset X$ is a totally geodesic submanifold of dimension $m$, isometric to $M$, which uniquely determines
the geometry of $X$.   We will only be considering the case when $m = n-1$.
Then $C(X)$ has codimension $1$ and $X$ is isometric to the warped product metric on $C(X) \times \mathbb{R}$ given by
\begin{equation}\label{eqn:warped}
(\cosh^2 t) g \oplus dt^2, 
\end{equation}
where $g$ is the metric on $C(X)$.  

The next definition is used to describe hyperbolic structures whose ends are isometric to 
a Fuchsian structure with codimension $1$ convex core. 

\begin{defn}\label{defn:Fuchsian ends}
Let $X$ be a complete hyperbolic $n$-manifold without boundary.  Then $X$ \emph{has Fuchsian ends} 
and is not Fuchsian 
if its convex core $C(X)$ is an $n$-manifold with totally geodesic boundary, and some $\varepsilon$-tubular neighborhood of $C(X)$ has finite volume. 
\end{defn}

In general, this terminology allows $C(X)$ to have finite volume ends. 
We will avoid this possibility by assuming that $X$ is convex compact.  Each 
end of $X$ is geometrically determined by a component $S$ of $\partial C(X)$. The end beginning at $S$ is
isometric to $S \times [0,\infty)$ equipped with the Fuchsian warped product metric (\ref{eqn:warped}). 

Given a hyperbolic manifold $W$ with totally geodesic boundary, we can form a manifold $X$ with Fuchsian ends and empty boundary 
by simply isometrically gluing ends $\partial W \times [0,\infty)$ onto the boundary of $W$ equipped with the Fuchsian warped product metric (\ref{eqn:warped}).  The convex core of $X$ will be $W$.  This relationship is canonical: a hyperbolic manifold with nonempty totally geodesic boundary determines a unique manifold with Fuchsian ends and vice versa.

Given this tight relationship, it is natural to ask whether it's possible to phrase Theorem \ref{thm:intro} in terms of deformations of compact hyperbolic manifolds with totally geodesic boundary.  This is possible, but requires some care.  Let $X$ be a convex cocompact hyperbolic manifold with Fuchsian ends, dimension $n \ge 4$, and $n$-dimensional convex core $C$. 
Then simply replacing $X$ with $C$ everywhere in Theorem \ref{thm:intro} produces a false statement.  There are many hyperbolic metrics on the underlying manifold with boundary 
which is
diffeomorphic to $C$.  One could, for example, take a closed $\varepsilon$-neighborhood $C_\varepsilon$ of $C$ inside $X$.  The manifold $C_\varepsilon$ would be diffeomorphic but not isometric to $C$ because the boundary of $C_\varepsilon$ would be strictly convex rather than totally geodesic.  However, it is clear that such a deformation is, in some sense, trivial.  One can rule out such trivial deformations, yielding the following restatement of Theorem \ref{thm:intro}.

\begin{thm}\label{thm:intro with boundary}
Let $C$ be a compact hyperbolic $n$-manifold with nonempty totally geodesic boundary.  Let $X$ be the 
manifold with Fuchsian ends determined by $C$.  If $n>3$ then $C$ has no nontrivial deformations in the following sense:
if $g_n$ is a sequence of hyperbolic metrics on $C$ converging to the given metric in the $\mathcal{C}^\infty$ topology,
then for sufficiently large $n$ the Riemannian manifolds $(C,g_n)$ admit isometric embeddings into $X$, and 
the boundaries of the images converge in the $\mathcal{C}^{\infty}$ topology to $\partial C$.
\end{thm}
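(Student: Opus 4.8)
The plan is to reduce Theorem \ref{thm:intro with boundary} to the local rigidity of the holonomy representation --- that is, to Theorem \ref{thm:intromain} --- by developing the nearby hyperbolic structures on $C$ into $\h^n$ and showing that, after a suitable normalization, the resulting developing maps converge to the defining inclusion $C \hookrightarrow X$.

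First I would set up the developing data. A hyperbolic metric $g$ on the compact manifold-with-boundary $C$ determines, once a point $p \in C$ and an orthonormal frame at $p$ are fixed, a developing map $D \colon \widetilde C \to \h^n$ together with a holonomy representation $\rho \colon \pi_1 C \to \Isom(\h^n)$, with $D$ a $\rho$-equivariant local isometry. For the given metric $g_0$ this pair is, after identifying $\widetilde C$ with the convex hull $\operatorname{Hull}(X) \subset \h^n$ and the group $\Gamma := \rho_0(\pi_1 C)$ with the holonomy group of $X$, simply the inclusion $\operatorname{Hull}(X) \hookrightarrow \h^n$. The analytic input is the standard continuity of developing maps (see \cite{Th}): if $g_m \to g_0$ in the $\mathcal{C}^\infty$ topology on $C$, then $\rho_m \to \rho_0$ in $\Hom(\pi_1 C, \Isom(\h^n))$ (compact-open topology) and $D_m \to D_0$ in $\mathcal{C}^\infty$, uniformly on compact subsets of $\widetilde C$.

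Next I would invoke Theorem \ref{thm:intromain}. Since $\pi_1 C = \pi_1 X$, the representation $\rho_0$ is the holonomy of the convex cocompact manifold $X$ with Fuchsian ends, and $n > 3$, a neighborhood of $\rho_0$ in $\Hom(\pi_1 C, \Isom(\h^n))$ consists of conjugates of $\rho_0$; hence for all large $m$ we may write $\rho_m = h_m\, \rho_0\, h_m^{-1}$ with $h_m \in \Isom(\h^n)$. Because $\rho_0$ is nonelementary, the orbit map $h \mapsto h\, \rho_0\, h^{-1}$ is a local homeomorphism near $\operatorname{id}$, so the $h_m$ may be chosen with $h_m \to \operatorname{id}$. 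Replacing $D_m$ by $h_m^{-1} \circ D_m$ gives developing maps for $(C, g_m)$ that are equivariant with respect to the \emph{fixed} representation $\rho_0$ and still converge in $\mathcal{C}^\infty$, uniformly on compact sets, to the inclusion $\operatorname{Hull}(X) \hookrightarrow \h^n$. Passing to the quotient by $\Gamma$, these descend to local isometries $f_m \colon (C, g_m) \to X = \h^n/\Gamma$; since $C$ is compact, the $f_m$ converge to the inclusion $C \hookrightarrow X$ in the $\mathcal{C}^\infty$ topology on all of $C$.

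Finally, a $\mathcal{C}^1$-small perturbation of a smooth embedding of a compact manifold with boundary is again an embedding, so for all large $m$ the map $f_m$ is injective; being also a local isometry with compact domain, it is then an isometric embedding of $(C, g_m)$ into $X$, and $f_m(\partial C)$ converges in $\mathcal{C}^\infty$ to $\partial C$. This is exactly the asserted conclusion. The step I expect to require the most care --- the ``care'' alluded to above --- is the bookkeeping behind the continuity statement for the developing pair $(D_m,\rho_m)$: one must check that the developing map of a hyperbolic manifold \emph{with boundary} depends continuously, in $\mathcal{C}^\infty$ on compacta, on the hyperbolic metric, and that this is compatible with the chosen basepoint and frame. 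This follows from the usual construction of the developing map by analytic continuation, whose dependence on the metric is governed by ODEs with coefficients depending smoothly on the metric, and no essential new difficulty arises at $\partial C$; one may also carry this out after passing to a fixed collar extension of $C$ and restricting back. Granting this, the theorem is a direct consequence of Theorem \ref{thm:intromain}.
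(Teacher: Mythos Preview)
Your argument is correct and is essentially the approach the paper has in mind; the paper's own treatment is only a one-sentence sketch, citing Theorem~\ref{thm:intro} together with the analytic thickening of $(C,g_n)$ to an open hyperbolic manifold from \cite{CEG} and the fact that $\mathcal{C}^\infty$-close boundaries are nearly totally geodesic. The thickening step is exactly the ``fixed collar extension'' alternative you mention at the end, and is the paper's device for handling the boundary bookkeeping you flagged; otherwise the two arguments coincide.
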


The proof of Theorem \ref{thm:intro with boundary} uses Theorem \ref{thm:intro}, the fact that the manifolds 
$(C,g_n)$ admit analytic thickenings to open manifolds \cite{CEG}, and that $\mathcal{C}^\infty$ convergence 
guarantees the boundaries are nearly totally geodesic.  As it is not central to this paper, the details are omitted.

\subsection{Algebraic preliminaries}\label{sec:algebraic preliminaries}

The holonomy representation leads to an algebraic description of the
deformation theory of hyperbolic structures.  In this subsection we briefly outline 
this algebraic structure.  For more detailed presentations of this material see \cite{We,We2,Rag,Gold1,HK1,HK3}.

Let $X$ be a complete hyperbolic $n$-manifold with universal cover $\widetilde X$. Let $G$ denote 
the group of isometries of $\h^n$. Choose an isometric identification of $\widetilde X$ with $\h^n$ and 
let $\rho: \pi_1(X) \to G$ be the associated
holonomy representation.  Denote by $\Gamma$ the image group $\rho(\pi_1(X))$ so that we can
identify $X$ with the quotient $\h^n/\Gamma$.  The representation $\rho$ is a point in the
space of representations 
$\Hom(\pi_1(X),G)$, which is equipped with the compact-open topology.  A sequence of hyperbolic metrics 
$g_n$ on $X$ determines a sequence of holonomy representations, each of which is only determined 
up to conjugacy.  However, it is a standard fact that if the metrics converge smoothly then it is possible to choose a convergent sequence of holonomy representations.

\begin{lem}\label{lem:Riem2holonomy}
Suppose $g_n$ is a sequence of hyperbolic metrics on $X$ converging in the compact-$\mathcal{C}^\infty$ 
topology to the initial metric $g$.  Then it is possible to choose holonomy representations $\rho_n$ of $(X,g_n)$ 
such that $\rho_n \to \rho$ in $\Hom(\pi_1(X),G)$.
\end{lem}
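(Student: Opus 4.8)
The plan is to produce, for each $n$, an explicit developing map for $(X,g_n)$ that depends continuously on $g_n$, and to read off its holonomy. First I would use the given complete hyperbolic metric $g$ to identify the universal cover $\widetilde X$ with $\h^n$, so that $\pi_1(X)$ is identified with $\Gamma = \rho(\pi_1(X))$, a subgroup of $G$ acting by the standard isometries, and $\rho$ is the inclusion. Pulling each $g_n$ back to $\widetilde X = \h^n$ yields a complete, simply connected metric $\widetilde g_n$ of constant curvature $-1$ on $\h^n$ on which $\Gamma$ acts by isometries (as deck transformations), and $\widetilde g_n \to g_{\h^n}$ in the compact-$\mathcal{C}^\infty$ topology, where $g_{\h^n}$ is the standard metric. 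Since $(X,g_n)$ is complete, its developing map is a diffeomorphism (equivalently, by the classification of space forms, $(\h^n,\widetilde g_n)$ is isometric to standard $\h^n$), so there is an isometry $\phi_n \colon (\h^n, \widetilde g_n) \to (\h^n, g_{\h^n})$. It is unique up to post-composition with an element of $G$, and I would normalize it by fixing a basepoint $o$ and a frame at $o$ and requiring $\phi_n(o) = o$ with $d\phi_n|_o$ carrying the $\widetilde g_n$-Gram--Schmidt orthonormalization of the fixed frame to a fixed $g_{\h^n}$-orthonormal frame. Then $\phi_n$ is a developing map for $(X,g_n)$, its holonomy is $\rho_n(\gamma) := \phi_n \circ \gamma \circ \phi_n^{-1} \in G$, and $(X,g_n) \cong \h^n/\rho_n(\Gamma)$.

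It then remains to show $\rho_n \to \rho$. Because $\pi_1(X)$ is discrete, convergence in $\Hom(\pi_1(X), G)$ with the compact-open topology is just pointwise convergence, so it suffices to show $\rho_n(\gamma) \to \gamma$ in $G$ for each fixed $\gamma$. An element $\psi \in G$ is determined by, and depends continuously on, the pair $(\psi(o), d\psi|_o)$, since $\psi = \exp^{g_{\h^n}}_{\psi(o)} \circ d\psi|_o \circ (\exp^{g_{\h^n}}_o)^{-1}$. As $\phi_n(o) = o$, one computes $\rho_n(\gamma)(o) = \phi_n(\gamma(o))$ and $d\rho_n(\gamma)|_o = d\phi_n|_{\gamma(o)} \circ d\gamma|_o \circ (d\phi_n|_o)^{-1}$, with $\gamma(o)$ a fixed point of $\h^n$; hence it is enough to prove $\phi_n \to \mathrm{id}$ in the compact-$\mathcal{C}^1$ (indeed compact-$\mathcal{C}^\infty$) topology.

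For the latter I would express $\phi_n$ through exponential maps. By Cartan--Hadamard, $\exp^{\widetilde g_n}_o$ and $\exp^{g_{\h^n}}_o$ are diffeomorphisms from $T_o\h^n$ onto $\h^n$, and since isometries carry geodesics to geodesics, $\phi_n = \exp^{g_{\h^n}}_o \circ A_n \circ (\exp^{\widetilde g_n}_o)^{-1}$, where $A_n = d\phi_n|_o$ is the normalizing linear isometry $(T_o\h^n, \widetilde g_n|_o) \to (T_o\h^n, g_{\h^n}|_o)$. Now $A_n \to \mathrm{id}$ because $\widetilde g_n|_o \to g_{\h^n}|_o$ and the Gram--Schmidt process is continuous, and $\exp^{\widetilde g_n}_o \to \exp^{g_{\h^n}}_o$ in the compact-$\mathcal{C}^\infty$ topology because the geodesic ODE has coefficients (Christoffel symbols) depending continuously on the metric in $\mathcal{C}^\infty_{\mathrm{loc}}$, so its flow and all of the flow's derivatives depend continuously on the metric over compact subsets; taking inverses of these diffeomorphisms on a fixed compact ball and composing gives $\phi_n \to \mathrm{id}$ locally uniformly in all derivatives. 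The one step requiring genuine care — and the only real obstacle — is precisely this continuous dependence of the developing data on the metric: it is routine because everything reduces to dependence of ODE solutions on parameters over a compact interval (and for each $\gamma$ one needs only control of $\phi_n$ near the single point $\gamma(o)$), but it must be set up carefully rather than merely asserted.
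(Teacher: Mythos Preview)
Your proof is correct and follows the standard route one would expect: normalize developing maps by a basepoint and frame, express them via exponential maps, and invoke smooth dependence of ODE solutions on parameters to get $\phi_n \to \mathrm{id}$ in $\mathcal{C}^\infty_{\mathrm{loc}}$, hence $\rho_n(\gamma) = \phi_n \circ \gamma \circ \phi_n^{-1} \to \gamma$. The paper itself gives no proof of this lemma; it is introduced with the phrase ``it is a standard fact'' and stated without argument, so there is nothing to compare your approach against beyond noting that what you wrote is exactly the kind of argument the authors are implicitly appealing to.

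One small remark: you invoke completeness of each $g_n$ to conclude that the developing map is a global diffeomorphism, but the lemma as stated does not assume the $g_n$ are complete. This is harmless, since the developing map and holonomy are defined for any hyperbolic structure via analytic continuation of local isometric charts, and your exponential-map construction (restricted to a star-shaped domain large enough to reach $\gamma(o)$ for the finitely many $\gamma$ under consideration) goes through verbatim without global completeness. In the paper's intended application $X$ is convex cocompact and nearby structures are complete anyway, so the point is moot in context.
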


\begin{defn}
A hyperbolic manifold $X$ is \emph{locally rigid} if a holonomy representation $\rho$ for $X$ has a 
neighborhood $U \subseteq \Hom(\pi_1(X),G)$ such that any representation in $U$ is conjugate to $\rho$.
\end{defn}

By Lemma \ref{lem:Riem2holonomy}, if $X$ is locally rigid then $X$ is locally rigid in the sense 
of Theorem \ref{thm:intro}.
The converse is not generally true.  However,
it is true when $X$ is compact or, more generally, when it is convex cocompact. 
This converse direction will not be used here, and is not pursued further. 

Suppose $\rho_t$ is a smooth path in $\Hom(\pi_1(X),G)$ such that $\rho_0 = \rho$.  Then for each $\gamma \in \pi_1(X)$,  
$\rho_t(\gamma)$ is a smooth path in $G$ and 
the derivative 
\[ \frac{d}{dt} \left( \rho_t (\gamma) \cdot \rho(\gamma)^{-1}\right)\] 
at $t=0$ determines
an element in the Lie algebra $\mathfrak{g}$, where the latter is identified with the
tangent space to $G$ at the identity.  This determines a set 
map $z: \pi_1(X) \to  \mathfrak{g}$.  The fact that $\rho_t$ is a homomorphism implies that
$z(\gamma_1 \gamma_2) = z(\gamma_1) + \rho(\gamma_1) z(\gamma_2) \rho(\gamma_1)^{-1}.$   
Define a $\pi_1(X)$-action on $\mathfrak{g}$ by $\gamma \cdot V := \Ad (\rho(\gamma))\cdot V$, 
where $\Ad$ denotes the adjoint action.  The 
previous equation becomes
$z(\gamma_1 \gamma_2) = z(\gamma_1) + \gamma_1 \cdot z(\gamma_2).$
A map satisfying this equation is defined to be a \emph{$1$-cocycle} for the group cohomology of 
$\pi_1(X)$ with coefficients in $\mathfrak{g}$ twisted by the above $\pi_1(X)$-action.  
The group of such cocycles is denoted by $  Z^1(\pi_1(X); \mathfrak{g}_{\Ad  \rho} ).$

Given a smooth path $g_t \in G$ so that $g_0 = e$ one can define a path of representations 
$\rho_t =  g_t \rho g_t^{-1}$.  The associated cocycle $z$ in this case equals 
$z(\gamma) = V - \gamma \cdot V$ where $V \in \mathfrak{g}$ is the vector tangent to $g_t$ at 
$t=0$.  The right hand side of this equation defines a cocycle for any $V \in \mathfrak{g}$.  
The set of such cocycles, called coboundaries, is denoted by 
$  B^1(\pi_1(X); \mathfrak{g}_{\Ad \rho} ).$

The \emph{cohomology group} $H^1 (\pi_1(X); \mathfrak{g}_{\Ad \rho})$ is defined to be the quotient 
\[ \frac{Z^1 (\pi_1(X); \mathfrak{g}_{\Ad \rho})}{B^1(\pi_1(X); \mathfrak{g}_{\Ad \rho} )}.\]
This quotient group can be interpreted as an algebraic description of the tangent 
space at $\rho$ to the representation space $\Hom(\pi_1(X),G)$ modulo conjugation.  Under 
certain smoothness hypotheses it is isomorphic to the actual tangent space of this quotient space.  
In any case, it is referred to as the space of \emph{infinitesimal deformations} of the
hyperbolic structure on $X$.
This leads us naturally to the next definition.

\begin{defn}\label{defn:infrig}
A hyperbolic manifold $X$ is \emph{infinitesimally rigid} if 
\[H^1 (\pi_1(X); \mathfrak{g}_{\Ad \rho}) = 0.\]
\end{defn}

Infinitesimal rigidity states that any path $\rho_t$ through the holonomy representation $\rho$ agrees to first order at $\rho$ with a geometrically trivial path consisting entirely of conjugate representations.  An application of the implicit function theorem proves the following lemma, known as Weil's lemma.

\begin{lem}\label{lem:Weil}\cite{We2}
If $X$ is infinitesimally rigid then it is locally rigid.  
\end{lem}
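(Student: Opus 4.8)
The plan is to realize the representation variety locally as a real-analytic subset of a finite-dimensional vector space cut out by the relations in a presentation of $\pi_1(X)$, and to show that infinitesimal rigidity forces this subset to be a single orbit near $\rho$. First I would reduce to the case where $\pi_1(X)$ is finitely generated; this is legitimate since $X$ is homotopy equivalent to its convex core, which is compact (we are in the convex cocompact setting of Theorem \ref{thm:intro}), so $\pi_1(X) = \langle \gamma_1, \dots, \gamma_k \mid r_1, r_2, \dots \rangle$ is finitely presented. Evaluation on the generators embeds $\Hom(\pi_1(X), G)$ as a closed subset of $G^k$, namely the common zero set of the real-analytic maps $R_j \colon G^k \to G$ recording the relators $r_j$. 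Thus a neighborhood of $\rho$ in $\Hom(\pi_1(X),G)$ is a real-analytic variety sitting inside the smooth manifold $G^k$.

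Next I would identify the tangent and obstruction data of this variety with group cohomology. The Zariski tangent space to $\Hom(\pi_1(X),G)$ at $\rho$ — the kernel of the differential of $(R_1, R_2, \dots)$ at $\rho$ — is exactly the space of $1$-cocycles $Z^1(\pi_1(X); \mathfrak{g}_{\Ad\rho})$, by the cocycle computation carried out in Section \ref{sec:algebraic preliminaries}. Meanwhile, the orbit $G \cdot \rho$ under conjugation is a smooth submanifold of $G^k$ through $\rho$ whose tangent space at $\rho$ is precisely the space of coboundaries $B^1(\pi_1(X); \mathfrak{g}_{\Ad\rho})$; indeed the orbit map $g \mapsto g\rho g^{-1}$ is real-analytic with differential at $e$ given by $V \mapsto (\gamma \mapsto V - \gamma\cdot V)$, and its image is the orbit. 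The hypothesis $H^1(\pi_1(X); \mathfrak{g}_{\Ad\rho}) = 0$ says exactly that $Z^1 = B^1$, i.e.\ the Zariski tangent space to the representation variety at $\rho$ coincides with the tangent space to the conjugation orbit.

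Now comes the step I expect to be the crux: upgrading this infinitesimal coincidence to an honest local coincidence of sets. The orbit $G\cdot\rho$ is a real-analytic submanifold of $G^k$ contained in the analytic variety $\Hom(\pi_1(X),G)$, and at the point $\rho$ their tangent spaces agree. One invokes the following standard fact from real-analytic geometry (or applies the implicit function theorem after slicing): if an analytic set $Z$ contains a submanifold $N$ through a point $p$ with $T_p N = T_p Z$ (the Zariski tangent space), then $Z$ coincides with $N$ in a neighborhood of $p$. Concretely, after a linear change of coordinates on $G^k$ one may assume $N$ is a coordinate subspace near $\rho$; the defining equations of $Z$, having no linear part transverse to $N$, cut out $N$ itself locally once one checks there is no higher-order contribution forcing $Z$ to be strictly smaller — but $Z \supseteq N$, so $Z = N$ locally. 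Hence every representation in a neighborhood $U$ of $\rho$ in $\Hom(\pi_1(X),G)$ lies on the conjugation orbit of $\rho$, which is the definition of local rigidity. The delicate point is purely the transition from the first-order statement ($Z^1 = B^1$) to the set-theoretic one, and it is exactly here that one needs the ambient variety to be analytic (so that it is not "thicker" than its tangent cone in a way that could evade the orbit) rather than merely smooth — this is the content of Weil's original argument in \cite{We2}, and I would cite it for the technical real-analytic input rather than reprove it in full.
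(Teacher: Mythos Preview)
The paper does not actually prove this lemma: immediately before the statement it says only that ``an application of the implicit function theorem proves the following lemma, known as Weil's lemma,'' and cites \cite{We2}. Your sketch is precisely the standard argument behind that citation---embed $\Hom(\pi_1(X),G)$ as a real-algebraic subvariety of $G^k$, identify its Zariski tangent space at $\rho$ with $Z^1$ and the tangent space of the conjugation orbit with $B^1$, and use $H^1=0$ to conclude these coincide---so you are elaborating exactly what the paper leaves implicit.

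One wording issue in your last paragraph is worth tightening. You write that one must check ``there is no higher-order contribution forcing $Z$ to be strictly smaller,'' but the inclusion $N\subseteq Z$ is already given; the genuine worry is that $Z$ might be strictly \emph{larger} than $N$ near $\rho$. The clean way to dispatch this is a dimension count: since $N\subseteq Z$ we have $\dim N \le \dim_\rho Z$, while for any real-analytic (indeed real-algebraic) germ $\dim_\rho Z \le \dim T_\rho Z$; by hypothesis $\dim T_\rho Z = \dim T_\rho N = \dim N$, so equality holds throughout. Equality of local dimension and Zariski tangent dimension means $Z$ is nonsingular at $\rho$, hence a manifold there of the same dimension as $N$, and a connected manifold containing a submanifold of the same dimension coincides with it locally. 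This is the ``implicit function theorem'' step the paper alludes to. (You should also note, for completeness, that the conjugation orbit really is a locally embedded submanifold: the centralizer of the holonomy image of a convex cocompact hyperbolic manifold is finite, so the orbit map $G\to G\cdot\rho$ is a finite covering onto its image.)
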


With this we are ready to state the main theorem of this paper.

\begin{thm}\label{thm:mainFuchs}
Let $X$ be  a convex cocompact hyperbolic $n$-manifold without boundary which has Fuchsian ends 
but is not Fuchsian.  If $n>3$ then $X$ is infinitesimally rigid.
\end{thm}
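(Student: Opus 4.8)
The plan is to prove infinitesimal rigidity by combining a Hodge/Bochner-style analysis on the convex core $W$ with a control of the boundary contribution coming from the Fuchsian ends. Since $H^1(\pi_1 X;\mathfrak g_{\Ad\rho})\cong H^1(\pi_1 W;\mathfrak g_{\Ad\rho})$, and $W$ is a compact manifold with totally geodesic boundary $M$, the natural framework is de Rham cohomology with coefficients in the flat bundle $E=\widetilde W\times_\rho\mathfrak g$. An infinitesimal deformation is represented by a closed $E$-valued $1$-form; using the decomposition $\mathfrak g=\so(n)\oplus\mathfrak p$ one identifies the $\mathfrak p$-part with a symmetric $2$-tensor (an infinitesimal change of metric) and the $\so(n)$-part with an infinitesimal change of framing. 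The standard move (as in Calabi--Weil, and in the Matsushima--Murakami/Raghunathan Bochner formula) is to pick the harmonic representative in each cohomology class and show it must vanish. On a closed manifold the Weitzenb\"ock formula forces vanishing because the relevant curvature term is negative; here, because $\partial W\neq\emptyset$, integration by parts produces a boundary term over $M$, and the whole difficulty is showing that this boundary term has the right sign (or vanishes) under the correct choice of boundary conditions.

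The key steps, in order, would be: (1) Set up the deformation complex $0\to\Omega^0(E)\to\Omega^1(E)\to\cdots$ on $W$, recall that the $\mathfrak p$-valued part of a deformation $1$-form corresponds to a traceless Codazzi-type tensor, and recall the Weitzenb\"ock identity relating $\Delta$ on $E$-valued $1$-forms to the rough Laplacian plus a curvature operator whose sign is controlled by the constant negative curvature of $\h^n$. (2) Identify the correct boundary condition along the totally geodesic $M=\partial W$: because each end of $X$ is the Fuchsian warped product $(\cosh^2 t)\,g_M\oplus dt^2$ over $M$, extending a deformation of $W$ across the ends imposes precisely that the normal component / shape-operator-variation of the $1$-form restricted to $M$ be controlled by an infinitesimal deformation of $M$ inside $\Isom(\h^{n-1})$ — i.e., the boundary data lives in $H^1(\pi_1 M;\mathfrak{so}(n-1,1)_{\Ad\rho_M})$ (or rather the relevant summand). (3) Run the integration by parts: $\int_W \langle \Delta\omega,\omega\rangle = \int_W |\nabla\omega|^2 + \int_W (\text{curvature term}) + \int_{\partial W}(\text{boundary term})$. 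The bulk terms already give $\omega\equiv 0$ on a closed manifold; the task is to show the boundary term over $M$ is $\le 0$ for a harmonic representative satisfying the Fuchsian-end boundary condition. (4) Analyze the boundary term: it should decompose into a piece that is manifestly non-positive (using that $M$ is totally geodesic, so the second fundamental form vanishes and only the warping $\cosh t$ enters) plus a piece governed by $H^1$ of the boundary, and here one invokes that $n>3$: the dimension hypothesis should enter through the sign of some coefficient (something like $(n-3)$ appearing when one commutes the warping derivative past the tangential Laplacian on $M$) or through a spectral estimate on $M$ that fails only when $n-1=2$, i.e. $n=3$, where surfaces genuinely deform. (5) Conclude $\omega\equiv 0$, hence $H^1=0$.

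I expect the main obstacle to be step (4): proving that the boundary integral over the totally geodesic $M$ is non-positive for harmonic representatives, and pinpointing exactly where $n>3$ is used. The subtlety is that $M$ itself may be deformable in $\Isom(\h^n)$ (by the Gromov--Thurston bending examples cited in the introduction), so one cannot simply assert the boundary data vanishes; instead one must show that even a nonzero bending deformation of $M$ produces a boundary term that, together with the bulk Weitzenb\"ock term, still forces the extension to $W$ to be trivial. Concretely, I anticipate needing a refined Weitzenb\"ock/Bochner computation adapted to the warped-product end: writing the harmonic $1$-form on the end in separated form $e^{\lambda t}(\text{tensor on }M)$, one gets an indicial equation whose roots depend on $n$, and decay of the form on the infinite-volume end selects the branch; the requirement that this decaying solution match a closed form on $W$ at $t=0$ is what kills the deformation. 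The case $n=3$ fails precisely because the indicial roots degenerate (or the relevant tensor space on the surface $M^2$ is too large — holomorphic quadratic differentials). A secondary technical point will be handling the $\so(n)$-valued (framing) part of the $1$-form and showing it contributes no genuine cohomology, and making the functional-analytic setup (existence of harmonic representatives with prescribed boundary behavior on a manifold with boundary and with infinite-volume ends attached) rigorous — most cleanly by working on the compact $W$ with the boundary condition extracted in step (2) rather than on the noncompact $X$ directly.
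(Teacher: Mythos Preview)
Your outline has the right skeleton---work on the compact convex core $W$, represent classes by harmonic $E$-valued $1$-forms, apply the Weitzenb\"ock formula, and analyze the boundary integral over $M=\partial W$---but two of the central mechanisms are misidentified, and without them the argument does not close.

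First, the role of $n>3$ is not what you guess. It does not enter through a coefficient like $(n-3)$ in the boundary computation, nor through indicial roots on the warped-product end, nor through a spectral gap on $M$. It is used once, and early: restricting $\E$ to $M$ gives an orthogonal splitting $\E|_M\cong\mathcal H\oplus\mathcal S$ with $\mathcal H$ the $\so(1,n-1)$-summand, and when $\dim M=n-1\ge 3$ Calabi--Weil rigidity for the \emph{closed} manifold $M$ gives $H^1(M;\mathcal H)=0$. Thus the restriction of any class lands entirely in $H^1(M;\mathcal S)$, whose harmonic representatives are traceless symmetric $TM$-valued $1$-forms $\hat B$ with $D\hat B=D^*\hat B=0$. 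This structural fact about the boundary data is what the rest of the argument feeds on; without it the model form near $\partial W$ does not have the required shape.

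Second, the boundary term is not shown to be $\le 0$; it is shown to be exactly $0$, and the mechanism is not a sign estimate but an integration-by-parts identity on $M$. One first extends $\hat B$ to a model harmonic form near $\partial W$, then solves a carefully chosen elliptic boundary value problem (Dirichlet on the normal component, Neumann on the tangential component of the correcting vector field, packaged via ``canonical lifts'') to obtain a global harmonic representative $\omega$. These boundary conditions force $T^*\omega=0$ and reduce the boundary integrand to $2\,\bA\cdot\hat\bB$, where $\bA$ is the symmetric part of $Dv$ for a \emph{global} vector field $v$ on $M$. Then $\int_M \bA\cdot\hat\bB=\langle Dv,\hat\bB\rangle_M=\langle v,D^*\hat\bB\rangle_M=0$. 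Your proposed indicial/separation-of-variables analysis on the end, and the hope that bending deformations of $M$ merely contribute with the right sign, are not the route; the argument hinges on the specific boundary conditions making the term vanish identically.
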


Theorem \ref{thm:mainFuchs} implies Theorem \ref{thm:intro} of the introduction, but the converse implication need not hold.  There do exist locally rigid manifolds that are not infinitesimally rigid \cite{GoldMill}.  

Despite its simple description  the cohomology group $H^1 (\pi_1(X); \mathfrak{g}_{\Ad h})$
is extremely difficult to compute.  Given an explicit presentation of $\pi_1(X)$ and a formula
for $\rho$, computations are occasionally 
possible, at least with a computer.   However, 
a direct algebraic computation for any large class of examples is generally infeasible.
We will need to use analytic and geometric tools to describe this group.

Let $X$ be a complete hyperbolic manifold which has been identified with a quotient
$\h^n/\Gamma$ where $\Gamma = \rho(\pi_1(X))$ is the image in $G$ of a particular holonomy
representation $\rho$.
Consider the product $\widetilde \E  := \mathbb{H}^n \times \mathfrak{g}$ equipped with the diagonal $\Gamma$-action.
The quotient is a $\mathfrak{g}$-bundle $\E$ over $X$.  Note that the $\Gamma$-action on $\widetilde \E$ takes a constant section to a constant section.  Thus constant sections define a flat bundle structure on $\widetilde \E$ that descends to $\E$.

The group cohomology $H^1 (\pi_1(X); \mathfrak{g}_{\Ad \rho})$ is isomorphic to the singular 
cohomology group $H^1(X;\E)$.  (This is true for any connected CW complex with fundamental group equal to 
$\pi_1(X)$.)  In turn, an extension of the usual deRham theorem shows that singular
cohomology equals deRham cohomology with values in $\E$.  The latter can be 
described as follows.  Consider the space $\Omega^{*}(\h^n; \widetilde \E)$ of $\widetilde \E$-valued differential forms on $\h^n$.  
Given a basis $\{B_i \}$ for $\mathfrak{g}$, such a $k$-form is given concretely as a finite linear 
combination $\sum B_i \otimes \theta_i$, where each $\theta_i$ is a real-valued differential $k$-form.  
Then $\Gamma$ acts on $\Omega^{*}(\h^n; \widetilde \E)$ via the adjoint action on $\widetilde \E$ and 
the usual action on differential forms. The $\widetilde \E$-valued 
forms fixed by this action form 
the space $\Omega^{*}(X;\E)$ of 
$\E$-valued differential forms on $X$.  Using the constant basis, the differential $d_\E$ can be defined on $\Omega^{*}(\h^n;\widetilde \E)$ as simply
\[ d_\E  \left( \sum B_i \otimes \theta_i \right) := \sum B_i \otimes d\theta_i .\]
On $\Omega^{*}(X;\E)$ the differential $d_\E$ is defined by applying the above definition on small open 
neighborhoods of $X$.  With this differential on 
the space of forms $\Omega^{*}(X;\E)$, we can define the deRham cohomology groups $H_{dR}^{*} (X;\E)$ as the closed $\E$-valued forms modulo the exact ones.

Of course, deRham cohomology is also difficult to compute directly.  In order to understand these groups, 
it will be necessary to use Hodge theory.  
We need a metric on the fibers of $\E$ to define the notion of a harmonic form in $\Omega^{*}(X;\E)$. 
For this a little terminology is required.  Represent $G= \Isom(\h^n)$ explicitly as the index $2$ subgroup of $\operatorname{O}(1,n)$ preserving the upper hyperboloid and let $K$ denote the maximal compact 
subgroup $\operatorname{O}(n)$ of $G$ fixing $(1,0,\ldots,0)$, 
so that $\h^n = G/K$.  Then consider the $1$-parameter 
family $g_t$ in $G$ given by the matrices of the form
\[ \begin{pmatrix}
A & 0 \\
0 & I
\end{pmatrix}, \quad \text{where} \quad A =
\begin{pmatrix}
\cosh t & \sinh t \\
\sinh t & \cosh t
\end{pmatrix}  \]
and $I$ is the $(n-1)$-dimensional identity matrix.  Taking the derivative at $t = 0$ determines a tangent vector at the identity element $e \in G$, hence an element of the Lie algebra $V \in \mathfrak{g}.$  
The family $g_t$ acts on $\h^n$ by isometries, translating along a geodesic through the basepoint 
$x_0 \in \h^n$ which is the image of $e \in G$ under the projection to $\h^n/K$.  (Viewing 
$\h^n$ as the upper hyperboloid in $\R^{1,n}$, $x_0$ is the point $(1,0,\ldots,0)$.)  Taking the derivative 
at $t=0$ determines a tangent vector based at $x_0.$  Conjugation by an element $\gamma \in G$ determines 
a family $\gamma g_t \gamma^{-1}$ that acts
on $\h^n$ by translation along a geodesic through $\gamma x_0$. 
The Lie algebra element obtained by taking the derivative of the 
conjugated family at $t=0$ equals $\Ad(\gamma)\cdot V$.  Taking the derivative of the conjugated action 
on $\h^n$ at $t=0$ determines a unit tangent vector in $T_{\gamma x_0} \h^n$. 
In this way a unit vector at a point $p \in \h^n$ uniquely determines a Lie algebra element,
and we refer to that element as the \emph{infinitesimal translation} at $p$
in that direction. We view it as an element of the $\mathfrak{g}$-bundle $\widetilde \E$ over $\h^n.$

Given an orthonormal frame $\{ e_i \}$ on an open patch of $\h^n$, we define the corresponding sections $\{ E_i \}$ of the $\mathfrak{g}$-bundle $\widetilde \E$, where $E_i (p)$ is the infinitesimal translation at $p$ in the direction $e_i (p)$.  Define $R_{ij}(p) := \left[ E_i (p) , E_j (p) \right]$ (for $i \neq j$) to be a unit infinitesimal rotation at $p$.  
It represents the skew-symmetric mapping of the tangent space at $p$ that takes $e_j$ to $e_i$, reflecting the fact that $\h^n$ has negative curvature.  It annihilates all vectors orthogonal to 
the plane spanned by $e_i$ and $e_j.$  Define a positive metric $\langle \cdot, \cdot \rangle$ on the bundle $\widetilde \E = \h^n \times \mathfrak{g}$ by taking $\{E_i(p), R_{ij}(p)\}_{i \neq j}$ as an orthonormal basis of the fiber over $p$.  With this metric, the action of $G$ on $\widetilde \E$ is isometric, and 
descends to $\E$ over $X$.
Equipped with this metric on $\E$ 
we can define a pairing on $\Omega^{*}(X;\E)$ by
\[ \langle \alpha, \beta \rangle := \int_X \alpha \wedge {*} \beta, \]
where the coefficients in $\E$ of $\alpha$ and $\beta$ are paired to produce a real number.  Use this pairing to define an adjoint $\delta_\E : \Omega^{k+1}(X;\E) \to \Omega^k (X;\E)$ to $d_\E$ on forms with compact support.  

It is important to note that none of the sections $p \mapsto E_i (p)$ and 
$p \mapsto R_{ij}(p)$ are flat.  
They never represent a locally constant element in $\mathfrak{g}$.  This can be 
seen geometrically from the fact that if the $1$-parameter subgroup determined by 
an infinitesimal rotation at a point $p$ does not fix $q$, then it cannot 
correspond to the same Lie algebra
element as an infinitesimal rotation at $q$.  Similarly, since an infinitesimal
translation at $p$ exponentiates to translation along a geodesic it can only correspond to
infinitesimal translations at other points along that geodesic; the corresponding Lie algebra
elements cannot 
be constant on an open set.

However, it is possible to relate the flat derivative $d_\E$ and its adjoint to the 
covariant derivative (and its adjoint)
determined by the hyperbolic metric on $\h^n.$  The covariant derivative has the advantange 
that it can be understood in local differential geometric terms.  The difference between the 
flat and the geometric derivative is a purely algebraic operator.

It is convenient to describe this relationship in terms of a local orthonormal frame $\{ e_i \}$ of $X$ and the resulting orthonormal basis $\{ E_i, R_{ij} \}$ of $\E$.  Let $\{ \omega_i \}$ be the dual coframe of 
$X$.  Then \cite[Ch.6]{Wu}\cite{MatMur,HK1}
\begin{align*}
d_\E  (V \otimes \alpha) & = \sum_j \left( \omega_j \wedge \nabla_{e_j} (V \otimes \alpha ) +
    \omega_j \wedge ( [E_j,V] \otimes \alpha ) \right)\\
\delta_\E  (V \otimes \alpha) & = - \sum_j i(e_j) \left( \nabla_{e_j} ( V \otimes \alpha) - [E_j,V] \otimes \alpha \right).
\end{align*}

To make sense of these equations we need to explain the term $\nabla_{e_j} (V \otimes \alpha)$, where $V$ is a local section of $\E$ and $\alpha$ is a differential form on $X$.
It equals $(\nabla_{e_j} V) \otimes \alpha + V \otimes \nabla_{e_j} \alpha,$
in other words the connection acts on both $V$ and $\alpha$. Here $\nabla_{e_j} \alpha$ 
denotes the covariant derivative with respect to the Levi-Civita connection associated 
to the hyperbolic metric.  Similarly, the connection $\nabla$ is defined on $\E$ using 
the Levi-Civita connection as follows.  First, $\nabla_{e_j} E_k$ is the Lie algebra element corresponding to $\nabla_{e_j} e_k$.  (Compute $\nabla_{e_j} e_k$ and then capitalize all the $e$'s.)  
Next, $\nabla_{e_k} R_{ij}$ is defined by considering $R_{ij}$ 
as a section of $\so(TX)$, the bundle of skew symmetric endomorphisms of $TX$, and using
the Levi-Civita connection to differentiate this section.
The result will again be skew symmetric, and one converts it back into a linear combination of infinitesimal rotations.
In short, if one writes a local section $V$ as a linear combination of the $\{E_i\}$ and 
of the $\{R_{ij}\}$, it can be viewed as a pair $(u,\widetilde u)$ consisting of a 
section $u$ of $TX$ and $\widetilde u$ of $\so(TX)$, and 
$\nabla V = (\nabla u, \nabla \widetilde u)$
where the Levi-Civita connection is used on these bundles. 
Finally, $i(e_j)$ indicates left 
contraction of the resulting form along the vector $e_j$ 
(eg. $i(v) \alpha \wedge \beta = \alpha(v) \cdot \beta - \beta(v) \cdot \alpha$).

It is useful to express this decomposition as $d_\E  = D + T$ and $\delta_\E  = D^{*} + T^{*}$, where
\begin{equation} \label{DTeqns} \begin{split} 
D & = \sum_j \omega_j \wedge \nabla_{e_j} \\
T & = \sum_j \omega_j \wedge \ad (E_j) \\
D^{*} & = - \sum_j i(e_j) \nabla_{e_j} \\
T^{*} & = \sum_j i(e_j) \ad (E_j),
\end{split}\end{equation}
and $\ad (E_j) (V \otimes \alpha) := [E_j,V]\otimes \alpha$.  Note that $D$ and $D^{*}$ take 
infinitesimal translations (resp. rotations) to infinitesimal translations (resp. rotations), while 
$T$ and $T^{*}$ take infinitesimal translations (resp. rotations) to infinitesimal rotations (resp. translations).

In particular, if we write a local section $s$ of $\E$ as a pair $(u,\widetilde u)$
consisting of a local section of $TX$ (i.e., a vector field) and of $\so(TX)$, 
then both $Du$ and $T \tilde u$ are $1$-forms with values in the infinitesimal translations 
of $X$ and so can be 
thought of as local sections of $TX \otimes T^{*} X$.  Using the identification of 
$TX \otimes T^{*} X$ with $\Hom(TX,TX)$, $T \widetilde u$ is skew-symmetric and corresponds to $-\widetilde u$.  Of particular importance are sections of $\E$ of where 
$T \widetilde u$ cancels out the skew-symmetric part of $Du$.

\begin{defn} \label{defn:canon_lift}
A local section $s = (u, \widetilde u)$ is a \emph{canonical lift} if the skew symmetric part of $D u$ equals $- T \widetilde u$.  A canonical lift is clearly determined by its vector field part $u$, and will be called the canonical lift of $u$.
\end{defn}

To help the reader digest this concept, we give two equivalent definitions.  First, 
the local section $s = (u, \widetilde u)$ is a canonical lift if and only if the skew symmetric part of $D u$ equals $\widetilde u$ viewed as a local section of $\so(TX)$.  Second, if we write 
\begin{equation}\label{eqn:canon_lift}
 d_\E s = \left( \sum b_{ij} E_i \otimes \omega_j \right) +
    \left( \sum c_{k \ell m} R_{k \ell} \otimes \omega_m \right).
\end{equation}
then $s$ is a canonical lift if and only if $b_{ij} = b_{ji}$.  This implies a locally constant section is a canonical lift, because its differential is zero.  We leave the proof of these equivalences to the reader.

On a contractible neighborhood a closed $1$-form $\omega$ (with coefficients in $\E$) has a section $s$ satisfying $d_\E s = \omega$.  This section is unique up to adding a locally constant section, implying that being a canonical lift is a property of the $1$-form $\omega$.  If, on any contractible neighborhood, there is always a canonical lift $s$ satisfying $d_\E s = \omega$, then we will say that $\omega$ locally admits a canonical lift.  Using equation (\ref{eqn:canon_lift}) it follows that $\omega$ admits a canonical lift if and only if $b_{ij} = b_{ji}$.

\begin{lem}\label{lem:canon_lift}\cite[Prop.2.3(c)]{HK1}
Given (globally defined) closed $1$-form $\omega \in \Omega^1 (X,\E)$, there is a global section $s$ such that $\omega + d_\E s$ locally admits a canonical lift.
\end{lem}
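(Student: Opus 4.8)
The plan is to reduce the global statement to a Hodge-theoretic correction. Write the flat differential as $d_\E = D + T$ and, following (\ref{eqn:canon_lift}), decompose $d_\E \omega$ — or rather, for a section $s$, the translation-part coefficients of $d_\E s$ — into a symmetric and a skew-symmetric piece, indexed by the $b_{ij}$. The obstruction to $\omega$ locally admitting a canonical lift is precisely the skew-symmetric part of the translation component of $d_\E s$ for any local primitive $s$ of $\omega$; as noted before the statement, this is a well-defined quantity depending only on $\omega$ (changing $s$ by a locally constant section does not change $d_\E s$). Call this obstruction $\beta(\omega)$; it is a globally defined section of the appropriate bundle (a $2$-form with values in infinitesimal rotations, or equivalently the skew part of a translation-valued piece). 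The goal is to find a global $s$ with $\beta(\omega + d_\E s) = 0$, i.e. to solve $\beta(d_\E s) = -\beta(\omega)$.

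First I would observe that $\beta(d_\E s)$, as an operator in $s$, only sees the part of $s$ coming from infinitesimal translations: if $s = (u, \widetilde u)$, then by the discussion preceding Definition \ref{defn:canon_lift} the translation-valued part of $d_\E s$ is $Du + T\widetilde u$, and $\beta(d_\E s)$ measures the failure of its skew part to vanish. Since $T\widetilde u$ is itself skew (corresponding to $-\widetilde u$), one can, given any $u$, first adjust $\widetilde u$ to kill the skew part of $Du$ — i.e. replace $s$ by the canonical lift of $u$ — and then ask how $\beta$ of a canonical lift behaves. For a canonical lift $s_u$ of a vector field $u$, the obstruction $\beta(d_\E s_u)$ becomes a second-order differential operator in $u$ alone; identifying infinitesimal translations with $TX$, this operator is essentially the one whose symmetrized Hessian-type part must match $-\beta(\omega)$. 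So the problem becomes: given the globally defined obstruction section $\beta(\omega)$, find a global vector field $u$ on $X$ with $\mathcal{L}(u) = -\beta(\omega)$ for this operator $\mathcal{L}$.

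Next I would solve this equation using Hodge theory / elliptic theory on the (non-compact but convex cocompact) manifold $X$, exactly as in the cited \cite[Prop.2.3(c)]{HK1}. The operator $\mathcal{L}$ should be (up to lower-order and algebraic terms, which on $\h^n$ are controlled by the constant curvature) a Laplace-type operator on vector fields, or can be embedded into the $d_\E$–$\delta_\E$ Laplacian $\Delta_\E$ on $\Omega^1(X;\E)$; one then writes $\omega = $ harmonic part $+ d_\E \delta_\E G\omega + \delta_\E d_\E G \omega$ using the Green's operator, and the term $d_\E(\delta_\E G \omega)$ supplies a candidate correction $s = \delta_\E G\omega$. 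One must check that this $s$, or its canonical-lift adjustment, actually zeroes out $\beta$. The key point is that the "bad" skew part of $Du$ is governed by the antisymmetrization of a covariant Hessian, and on a hyperbolic manifold this antisymmetrization is, via the Weitzenböck/curvature identity, expressible through the curvature operator and hence can be absorbed — this is where negative curvature of $\h^n$ enters, and it is the same mechanism by which $T\widetilde u$ was available to cancel skew parts in the first place.

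The main obstacle will be the analytic step: $X$ is non-compact with infinite volume, so one does not have the naive compact Hodge decomposition, and one must justify the existence of the Green's operator and the relevant $L^2$ (or weighted) decomposition on a convex cocompact hyperbolic manifold with Fuchsian ends, controlling the behavior of $s$ in the ends. Since $\omega$ is only assumed closed and globally defined — not $L^2$ — one may need to first correct $\omega$ by an exact form to make it decay, or work locally near the ends using the explicit warped-product structure (\ref{eqn:warped}) and then patch with a partition of unity. In the compact case (Theorem \ref{thm:intromain} for $W$) the elliptic theory is standard and the only real content is the algebraic identification of $\beta(d_\E s_u)$ as an elliptic operator in $u$ with the correct symbol; in the convex cocompact case one leans on the cited analytic framework. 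I expect the cleanest route is to quote \cite{HK1} for the structure of the argument and supply only the modifications needed for the Fuchsian-ends geometry.
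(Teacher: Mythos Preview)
There is a genuine gap, and it stems from using the rotational freedom in $s$ in exactly the wrong place. The condition to be arranged is that $\omega + d_\E s$ locally admits a canonical lift, which by the sentence just before the lemma is the pointwise algebraic condition that the translation part of $\omega + d_\E s$ be symmetric (i.e.\ $b_{ij}=b_{ji}$). Writing $s=(u,\widetilde u)$, the translation part of $d_\E s$ is $Du + T\widetilde u$, and the paper records that $T\widetilde u$ corresponds to $-\widetilde u\in\so(TX)$, an \emph{arbitrary} skew element. Hence the skew part of the translation component of $\omega + d_\E s$ equals
\[
\bigl(\text{skew part of the translation component of }\omega\bigr)\;+\;\operatorname{skew}(Du)\;-\;\widetilde u.
\]
Take $u=0$ and set $\widetilde u$ equal to the (globally defined) skew part of the translation component of $\omega$. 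This kills the obstruction pointwise. No PDE, no Hodge decomposition, no Green's operator, and no analysis on the ends is required; the correction is zeroth order. This is the content of the cited \cite[Prop.~2.3(c)]{HK1}, and the paper places all of the needed ingredients in the paragraph immediately preceding the lemma rather than writing out a separate proof.

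Your route fails for a concrete reason, not merely by overkill. You spend the freedom in $\widetilde u$ to make $s$ itself a canonical lift $s_u$; but then, by the very definition of canonical lift, the translation part of $d_\E s_u$ is the \emph{symmetric} part of $Du$, so $\beta(d_\E s_u)=0$ identically. Your proposed equation $\mathcal{L}(u)=-\beta(\omega)$ therefore reads $0=-\beta(\omega)$, which has no solution unless $\beta(\omega)$ was already zero. The claimed ``second-order operator in $u$'' does not exist: once you have forced $s$ to be a canonical lift, varying $u$ cannot change $\beta(d_\E s)$ at all. The fix is simply not to impose the canonical-lift condition on $s$, and instead use $\widetilde u$ directly to cancel $\beta(\omega)$.
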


Define the (flat) Laplacian on $\E$ as 
$\lap_\E  = d_\E  \delta_\E  + \delta_\E  d_\E .$  Then
\[ \lap_\E  = (D D^{*} + D^{*} D + T T^{*} + T^{*} T) + (D T^{*} + T D^{*} + D^{*} T + T^{*} D).\]
Conveniently, the second term $(D T^{*} + T D^{*} + D^{*} T + T^{*} D)$ is always zero \cite{MatMur}.  
In particular, the Laplacian preserves the decomposition into infinitesimal rotations and translations.  
Thus we can write \[ \lap_\E  = \lap_D + H,\]
where $\lap_D = D D^{*} + D^{*} D$ and $H= T T^{*} + T^{*} T$.  


The idea of the proof of Theorem \ref{thm:mainFuchs} is to represent every cohomology
class in $H^1(X;\E)$ by an $\E$-valued harmonic $1$-form $\omega$, 
one where $d_\E \omega = 0 = \delta_\E \omega$, hence $\lap_\E = 0$, and then show that such a form must equal $0$.  We will, in fact, not do this for the complete
manifold $X$ with Fuchsian ends, but rather for its convex core which is a
hyperbolic manifold $W$ with totally geodesic boundary.  On $W$ we will represent
each class in $H^1(W; \E)$ by a harmonic form satisfying certain boundary
conditions and show that such a form must equal $0$.  Since $H^1(X;\E) = H^1(W;\E)$,
this will prove Theorem \ref{thm:mainFuchs}.

We now state a key formula, called the \W formula, which will motivate the 
boundary conditions on $W$ that we will choose.  It is obtained via integration by parts.

\begin{prop}\label{prop:byparts}\cite[Prop.1.3]{HK1}
Let $W$ be a compact oriented hyperbolic manifold with boundary and let $\omega \in \Omega^1 (W;\E)$ be a smooth
$\E$-valued $1$-form.  Then
\begin{equation} \label{Weitzformula} 
(d_\E \omega, d_\E \omega) + (\delta_\E  \omega, \delta_\E  \omega) = (D \omega, D \omega) + (D^{*} \omega, D^{*} \omega) + (H \omega, \omega) + B,
\end{equation}
where $B$ denotes the boundary term
\begin{equation} \label{Weitzbdry} 
B = - \int_{\partial W} \left( {*}T\omega \wedge \omega + T^{*} \omega \wedge {*} \omega \right) .
\end{equation}
\end{prop}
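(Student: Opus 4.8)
The plan is to derive the identity by a direct integration-by-parts argument, using the decompositions $d_\E = D + T$ and $\delta_\E = D^{*} + T^{*}$ from~(\ref{DTeqns}) together with the vanishing of the cross-term $(DT^{*} + TD^{*} + D^{*}T + T^{*}D)$ established by Matsushima--Murakami. First I would write out the left-hand side as a pointwise quantity and massage it into something I can integrate. The starting point is that, for $\E$-valued forms with the given fiber metric, the Green-type formula $(d_\E\omega,\eta) = (\omega,\delta_\E\eta) + \text{(boundary)}$ holds, with the boundary contribution an integral over $\partial W$ of $\omega\wedge{*}\eta$ (up to sign conventions). Applying this to $\eta = d_\E\omega$ and, separately, the dual version with $\delta_\E\omega$, gives
\[
(d_\E\omega, d_\E\omega) + (\delta_\E\omega,\delta_\E\omega) = (\lap_\E \omega, \omega) + \int_{\partial W}\big({*}d_\E\omega\wedge\omega + \delta_\E\omega\wedge{*}\omega\big),
\]
where the interior term has been consolidated using $\lap_\E = d_\E\delta_\E + \delta_\E d_\E$.

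Next I would expand both sides using $d_\E = D + T$, $\delta_\E = D^{*} + T^{*}$. On the interior term, $\lap_\E = \lap_D + H$ with $\lap_D = DD^{*} + D^{*}D$ and $H = TT^{*} + T^{*}T$, so $(\lap_\E\omega,\omega) = (\lap_D\omega,\omega) + (H\omega,\omega)$. A second integration by parts on the closed manifold... rather, on $W$ with boundary, gives $(\lap_D\omega,\omega) = (D\omega,D\omega) + (D^{*}\omega,D^{*}\omega) + B_D$, where $B_D$ is the boundary term coming from the $D$-Green formula, namely an integral over $\partial W$ of ${*}D\omega\wedge\omega + D^{*}\omega\wedge{*}\omega$. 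Here I use that $D$ and $D^{*}$ are built from the Levi-Civita connection and hence satisfy the usual Riemannian adjointness with the usual boundary term. Substituting back, the interior parts assemble into exactly $(D\omega,D\omega) + (D^{*}\omega,D^{*}\omega) + (H\omega,\omega)$, and the leftover is a single boundary integral
\[
B = \int_{\partial W}\Big({*}d_\E\omega\wedge\omega + \delta_\E\omega\wedge{*}\omega - {*}D\omega\wedge\omega - D^{*}\omega\wedge{*}\omega\Big) = -\int_{\partial W}\big({*}T\omega\wedge\omega + T^{*}\omega\wedge{*}\omega\big),
\]
using $d_\E - D = T$ and $\delta_\E - D^{*} = T^{*}$ pointwise and the linearity of ${*}$ and of wedging. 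This is precisely~(\ref{Weitzbdry}) up to checking the sign, which I would pin down by carefully tracking the orientation of $\partial W$ and the convention for ${*}$ on the boundary.

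The main obstacle, and the only genuinely non-formal input, is the vanishing of the mixed cross-term $DT^{*} + TD^{*} + D^{*}T + T^{*}D$; without it the interior term would not split cleanly as $\lap_D + H$ and the right-hand side of~(\ref{Weitzformula}) would pick up extra integrated terms. I would cite~\cite{MatMur} for this, as the excerpt does, noting that it is a consequence of the structure of the flat connection relative to the Levi-Civita connection on the symmetric space (equivalently, of the integrability/curvature identities satisfied by the infinitesimal translations $E_j$). A secondary, more bookkeeping-level obstacle is sign and orientation consistency across the three Green formulas (for $d_\E$, for $D$, and the dual contractions), since $T$ and $T^{*}$ swap the translation and rotation factors; I would handle this by fixing one convention for ${*}$, one for the induced orientation on $\partial W$, and verifying the scalar case $\E = \R$ against the classical Green's formula as a sanity check before asserting the stated form of $B$.
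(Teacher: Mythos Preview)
The paper does not supply a proof of this proposition; it is simply quoted, with attribution to \cite[Prop.~1.3]{HK1}, and then used. So there is nothing in the paper to compare your argument against.

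That said, your outline is the standard derivation and is correct. Two applications of Green's identity---one for the pair $(d_\E,\delta_\E)$, one for $(D,D^{*})$---together with $\lap_\E = \lap_D + H$ (which is exactly where the Matsushima--Murakami vanishing of the cross-term is used) yield the formula, and the residual boundary integral is the difference of the two Green boundary contributions, which reduces to the $T,T^{*}$ terms since $d_\E-D=T$ and $\delta_\E-D^{*}=T^{*}$. Your caveat about the sign is well placed: the two Green identities are applied in opposite directions (one moves derivatives off $\omega$, the other moves them back on), so their boundary terms enter with opposite signs, and one must also track the sign $(-1)^{n-2}$ coming from commuting $\omega$ past ${*}T\omega$ in the wedge. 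The paper itself later remarks (Section~\ref{sec:bdryterm}) that it finds it easier to write the integrand as $(-1)^m\,\omega\wedge{*}T\omega$, which is the same bookkeeping issue. Once those signs are pinned down your argument is complete.
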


If $\omega$ is harmonic, which we always take to mean that it is closed and co-closed 
($d_\E  \omega = 0 = \delta_\E  \omega$), the left hand side of the formula (\ref{Weitzformula})
equals $0.$  
When $W$ is at least $3$ dimensional, Weil proved the existence of a $c>0$ such that
\[ ( H \omega, \omega) \ge c \, (\omega,\omega) \] 
for all $1$-forms $\omega \in \Omega^1 (W;\E)$ \cite{We}.  In the case when $W$ is closed, the boundary term $B$ 
is trivial and this formula implies that any harmonic $\E$-valued $1$-form must be $0.$  By the
Hodge theorem, this means that $H^1(W;\E) = 0$ which proves infinitesimal (hence local)
rigidity in the closed case for these dimensions.

Similarly, when $W$ has boundary, if one can show that any class in $H^1(W;\E)$ has a
harmonic representative where the the boundary term $B$ of (\ref{Weitzbdry}) 
is non-negative, this will again
prove that $H^1(W;\E) = 0.$  This is the basis of the proof of our main theorem, 
which we now state in the form it will be proved.


\begin{thm}\label{thm:main}
Let $W$ be a compact hyperbolic $n$-manifold with totally geodesic boundary.  
If $n>3$ then $H^1(W;\E) = 0$ so that $W$ is infinitesimally rigid.  Hence,
$W$ is locally rigid.
\end{thm}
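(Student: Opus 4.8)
The plan is to show that every class in $H^1(W;\E)$ has a harmonic representative $\omega$ for which the boundary term $B$ of \eqref{Weitzbdry} is non-negative; by the \W formula \eqref{Weitzformula} and Weil's positivity $(H\omega,\omega)\geq c(\omega,\omega)$, this forces $\omega=0$, and hence $H^1(W;\E)=0$ by the Hodge theorem for manifolds with boundary. The first step is therefore to set up the right Hodge theory: on the compact manifold $W$ with boundary we need a boundary value problem whose solution space computes $H^1(W;\E)$, i.e. a notion of harmonic form subject to boundary conditions that is both elliptic (so that finite-dimensionality and Hodge decomposition hold) and compatible with the cohomology we want. The natural candidates are variants of absolute/relative boundary conditions for the twisted de Rham complex; but the crucial point — and this is why the $(n-1)$-dimensional geodesic boundary $M=\partial W$ enters — is that we have freedom, via Lemma \ref{lem:canon_lift}, to modify $\omega$ within its cohomology class by $d_\E s$ so that it locally admits a canonical lift, and separately we may modify it along the boundary using the deformation theory of the Fuchsian end. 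So I would first pin down a class of boundary conditions, show the associated Laplacian $\lap_\E$ is self-adjoint with the boundary term vanishing in the integration by parts that defines it, and deduce that each cohomology class has a representative that is harmonic in this sense.

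Next I would analyze the boundary term $B = -\int_{\partial W}\left({*}T\omega\wedge\omega + T^{*}\omega\wedge{*}\omega\right)$ explicitly on the totally geodesic boundary $M$. Here the totally geodesic hypothesis is essential: along $M$ the second fundamental form vanishes, so the splitting of $TX|_M$ into $TM\oplus\nu$ (normal line) is parallel, and the algebraic operators $T,T^{*}$ — which only involve the infinitesimal translations $E_j$ and the bracket — decompose cleanly with respect to this splitting. Writing $\omega$ near $M$ in terms of its tangential and normal parts and using the canonical-lift normalization, I expect $B$ to reduce to an integral over $M$ of a quadratic expression in the restriction of $\omega$ (and its normal derivative) to $M$, which one recognizes as (a multiple of) the analogous \W boundary pairing for the $(n-1)$-dimensional manifold $M$ with coefficients in the corresponding bundle $\E_M$ — essentially the pairing controlling deformations of the Fuchsian manifold $M\times\R$. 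The strategy is then to choose the harmonic representative so that its boundary data is, in the terminology of the introduction, tangent to a \emph{trivial} deformation of the Fuchsian end, forcing $B\geq 0$; concretely, one shows the harmonic form can be taken so that its restriction to $M$ is a canonical lift of a divergence-free vector field on $M$, or lies in a subspace on which the boundary quadratic form is manifestly non-negative.

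The hard part, I expect, will be exactly this last matching: producing, within each cohomology class, a single representative that is simultaneously (i) harmonic for a good elliptic boundary value problem on $W$, (ii) locally a canonical lift on all of $W$ (Lemma \ref{lem:canon_lift} gives this only after a global gauge change $\omega\mapsto\omega+d_\E s$, and one must check this gauge change is compatible with harmonicity and with the boundary condition), and (iii) has boundary data on $M$ lying in the non-negativity locus of the boundary quadratic form. Steps (i) and (ii) pull in opposite directions — imposing a canonical-lift gauge is a first-order condition that may conflict with the second-order harmonicity condition unless the boundary conditions are chosen very carefully — and reconciling them is the technical core. I also anticipate needing the hypothesis $n>3$ twice: once for Weil's $c>0$ (which needs $\dim W\geq 3$), and once more in the boundary analysis, where for $n-1\geq 3$ the $(n-1)$-dimensional manifold $M$ again enjoys a Weil-type positivity that makes the boundary form controllable, whereas for $n-1=2$ the surface $M$ genuinely deforms and the argument must fail — consistent with the dimension hypothesis being sharp. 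Once $B\geq 0$ is established for the chosen representative, the conclusion is immediate: \eqref{Weitzformula} gives $0 = (D\omega,D\omega)+(D^{*}\omega,D^{*}\omega)+(H\omega,\omega)+B \geq c(\omega,\omega)$, so $\omega=0$, whence $H^1(W;\E)=0$, and Lemma \ref{lem:Weil} upgrades infinitesimal rigidity to local rigidity.
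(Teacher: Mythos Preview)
Your overall architecture is right---represent the class by a harmonic form with controlled boundary behavior and kill it with the \W formula---but two concrete ideas that make the argument actually run are missing, and one of your expectations about the mechanism is off.

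First, the paper does \emph{not} set up an absolute/relative Hodge theory on $W$ and then massage the representative. Instead it works in two stages. On the closed boundary $M$ the restricted class lies entirely in $H^1(M;\mathcal{S})$ (here is the precise use of $n>3$: $\dim M\ge 3$ forces $H^1(M;\mathcal{H})=0$, not merely a Weil-type inequality on $M$), and its harmonic representative is a symmetric traceless $TM$-valued $1$-form $\hat\bB$ with $D\hat\bB=D^*\hat\bB=0$. This $\hat\bB$ is then pulled back by orthogonal projection to a \emph{model harmonic form} $\omega_0$ on a Fuchsian collar of $M$, and one chooses $\hat\omega$ equal to $\omega_0$ near $\partial W$. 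The harmonic representative is obtained by solving for a global \emph{section} $s$ (a canonical lift) with $\delta_\E d_\E s=-\delta_\E\hat\omega$ under mixed boundary conditions on $s$: Dirichlet on the normal component of the vector-field part, Neumann on its tangential component. Because $\lap_\E$ corresponds to $\lap+2m$ on the dual $1$-form and $d$ commutes with this operator, canonical-lift gauge and harmonicity do not conflict; the problem reduces to a standard elliptic, self-adjoint, positive problem for a single $1$-form. These boundary conditions also force $T^*\omega=0$ via a separate argument showing $\operatorname{tr}\omega\equiv 0$.

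Second, the boundary term is not made non-negative by any positivity on $M$; it is shown to be \emph{exactly zero}. With $T^*\omega=0$ only $-\int_M *T\omega\wedge\omega$ survives, and a block-by-block computation (using that $\omega_0$ has only its $\hat\bB$-block nonzero, while the boundary conditions on $s$ kill the $\bB,\bD,\bE,\bG$ blocks of $d_\E s$) reduces the integrand to $2\,\bA\cdot\hat\bB$ with $\bA$ the symmetric part of $Dv$ for a global vector field $v$ on $M$. Then $\int_M \bA\cdot\hat\bB=\langle Dv,\hat\bB\rangle_M=\langle v,D^*\hat\bB\rangle_M=0$. So the crux is not a boundary \W inequality but an integration by parts on the closed manifold $M$ against the model form, which vanishes because $D^*\hat\bB=0$. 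Your sketch would need both the model-form construction and this vanishing mechanism to close.
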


\section{Harmonic forms on the boundary}\label{sec:bdryforms}


In this section we will begin the proof of Theorem \ref{thm:main}.

Let $W$ be a compact hyperbolic manifold with totally geodesic boundary and dimension 
$n \geq 4$.  Equivalently, let $X$ be a complete hyperbolic $n$-manifold which has Fuchsian ends and is not Fuchsian, and let $W$ be its convex core.  Define $M := \partial W$.  The goal is to prove $H^1(W;\E) \cong H^1(X;\E) = 0$.  The idea is to consider an infinitesimal deformation of the hyperbolic structure of $W$, 
described as a deRham cohomology class in 
$H^1(W;\E)$.  We find a harmonic representative $\omega$ of this cohomology class which is an $\E$-valued $1$-form on $W$ satisfying
certain boundary conditions.  We show that these boundary conditions can be chosen so 
that the boundary term $B$ in the \W formula from Proposition \ref{prop:byparts} will equal $0$.  
This implies that $\omega = 0$ and hence that $H^1(W;\E)$ is trivial.

There are three main steps in this proof.  Each has its own section. 
First, we analyze the structure of $\E$-valued harmonic forms on the boundary $M$  and show
how to extend them to model harmonic forms 
in a neighborhood of the boundary.  Any class 
in $H^1(W;\E)$ is represented by an $\E$-valued $1$-form ${\hat \omega}$ which is closed
and has this model harmonic 
structure near the boundary.  Thus, $d_\E  {\hat \omega} = 0$ on all of $W$, and 
$\delta_\E  {\hat \omega}$ is $0$ near the boundary, but $\delta_\E {\hat \omega}$ is not necessarily zero on all of $W$. 
To find a globally harmonic representative we must solve the problem of finding 
a section $s$ of $\E$ satisfying 
$\delta_\E  d_\E  s = - \delta_\E  {\hat \omega}$ on $W$.  Then 
$\omega = {\hat \omega} + d_\E  s$ is a harmonic form in the same cohomology class.  
In the second part we describe a boundary value problem for finding such a section $s$, 
and show that it is uniquely solvable.  In the final section we compute the boundary
term $B$ in the \W formula (\ref{Weitzformula}) and show that, for the harmonic form $\omega$ constructed
from our boundary value problem, this term is trivial.  This implies that $\omega$
itself must be trivial.

Because much of the analysis takes place on the boundary $M$ of 
the manifold $W$, we will denote the dimension of $M$ by its own letter, $m$.  
The dimension of $W$ will be then be denoted by $n = m+1$.  
Abusing notation, it will be convenient to let the letter $n$ also denote a vector normal to $M$. 

\bigskip
 
As sketched above, in this section we will describe harmonic $\E$-valued $1$-forms on, and in a 
neighborhood of the boundary $M$ of $W$.  
For this we choose local frames and coframes that reflect the geometry of the situation. 
At any point in $M$, choose a local orthonormal frame $\{n,e_1,\ldots,e_m\}$, where $n$ is an outward
normal vector, and the $e_i$ are tangent to $M$.
Let $\{N,E_1,\ldots,E_m\}$ denote the corresponding infinitesimal
translations, which are local sections of $\E$.  There are $m$ rotational generators $R_{ni}$, $i =1, \ldots, m$ corresponding to infinitesimal rotations in planes perpendicular to $M$.
They will play a special role in what follows.

As 
$W^{n}$ has totally geodesic boundary $M^m$, the 
restriction to a connected component of $M$ induces a representation of its fundamental group
into the orientation preserving isometries of $\h^m$ viewed as a subgroup of the isometries
of $\h^{n}$.  
For simplicity, we will discuss the case when the boundary is 
connected.  In general, the following 
argument goes through component by component,  
identifying a corresponding invariant hyperplane in the universal cover 
with $\h^m$ in each case.  

The adjoint action of the subgroup preserving $\h^m$ preserves a direct sum 
decomposition $\mathfrak{g} = \mathfrak{h} \oplus \mathfrak{s}$ of the Lie algebra
$\mathfrak{g}$ of $G = \Isom (\h^n)$. 
Here $\mathfrak{h} \cong \so (1,m)$
is the Lie algebra of the subgroup preserving $\h^m.$  The other factor
$\mathfrak{s}$ does not 
come from a subalgebra of $\mathfrak{g}$ but
is preserved under the adjoint action of the isometries of $\h^m$.
The infinitesimal version of this statement is that 
$[\mathfrak{h}, \mathfrak{s}] \subset \mathfrak{s}$.  We will describe this
factor in more detail below.

This decomposition of $\mathfrak{g}$ induces a decomposition of 
the bundle $\widetilde \E$ restricted to $\h^m.$  It is an orthogonal 
decomposition with respect to the metric on this bundle. 
Because it is invariant under the adjoint action, it descends to a decomposition 
of $\E$ over $M$; we denote the corresponding 
sub-bundles by $\mathcal{H}$ and $\mathcal{S}.$   Furthermore, it induces
a direct sum decomposition of the cohomology group $H^1(M;\E).$  
We write these decompositions
as $\E  \cong \mathcal{H} \oplus \mathcal{S}$ and 
$H^1(M;\E) \cong H^1(M;\mathcal{H}) \oplus H^1(M;\mathcal{S})$ .  

The cohomology group $H^1(M;\mathcal{H})$ parametrizes the infinitesimal
deformations of the $m$-dimensional hyperbolic structure on $M$.
In terms of the basis 
above, 
$\mathcal{H}$ is generated by $\{E_i\}$ and $\{R_{jk}\}$ for $i, j, k \in \{ 1,2,\ldots, m\}$. 
In other words, there is no normal component.

As a vector space, 
the factor $\mathcal{S}$ is generated by $N$ and $\{R_{ni}\}$.
This factor measures how 
the hyperplane $\h^m$ is being moved away from itself.  At any point in $\h^m$, the element
$N$ corresponds to infinitesimal translation in the direction
normal to the hyperplane.  An element $R_{ni}$ rotates in the plane
spanned by $n$ and $e_i$, rotating the tangent vector $e_i$ in the
hyperplane to 
the normal vector $n$.  The subspace in $\h^m$
orthogonal to $e_i$ is fixed under this rotation.  More intrinsically,
we can consider the space of all hyperplanes in $\h^{m+1}$, 
called $(m+1)$-dimensional deSitter space.  Then the fibers 
$\mathfrak{s}$ of $\mathcal{S}$ can be identified with the tangent space at our particular
hyperplane of deSitter space.

With respect to the positive definite 
metric on $\mathcal{S}$ induced as a subspace of 
$\E$, $N$ is orthogonal
to the 
$R_{ni}$.  Thus, $\mathcal{S}$ splits (as a metric bundle)
into a direct sum $\mathcal{S} \cong N \oplus \mathcal{B}$ (``$\mathcal{B}$"
for ``bending").  This is a reflection of the following useful
observation.  We can identify each infinitesimal rotation $R_{ni}$
with the corresponding infinitesimal translation $E_i$.  Extending 
this identification linearly, we can identify the subspace generated
by these rotations with the tangent space of 
$\h^m$.  The subspace
$N$ corresponds to the normal bundle of 
 $\h^m$ inside $\h^{n}$.
These identifications descend to $M$ and the bundle $\mathcal{S}$
can be identified with the tangent bundle of $M$ plus a trivial
line bundle.  Note that this decomposition of the bundle is not
compatible with the local flat structure; in particular, it does not
induce a decomposition of $H^1(M;\mathcal{S})$.

Now given an equivalence class $[\hat \omega] \in H^1(W;\E)$ of
infinitesimal deformations of the hyperbolic structure on $W$,
we can restrict it to the boundary $M$ and consider the
resulting class 
$[{\hat \omega}_M] \in H^1(M;\mathcal{H}) \oplus H^1(M;\mathcal{S}).$
When $M$ is compact and has dimension at least $3$, the first factor is
always trivial.  We assume that we are in this case.  (The case when $M$ has
dimension $2$ will be discussed briefly in Section \ref{sec:conjectures}.) 

Consider $[{\hat \omega}_M] \in  H^1(M;\mathcal{S}).$
Even when $M$ is compact and $m\geq 3$, it can be nontrivial.  Our
first step is to describe the 
harmonic elements in $ H^1(M;\mathcal{S})$.

\begin{thm} \label{harmonicbends}
Let $M$ be a closed hyperbolic $m$-manifold, $m\geq 2.$
Then $ H^1(M;\mathcal{S})$ is isomorphic to
the space of $TM$-valued $1$-forms $B$ on $M$ satisfying the equations 
$D B = 0 = D^{*} B$ and so that, when viewed as elements of
$\Hom(TM,TM)$, they are symmetric and traceless.
\end{thm}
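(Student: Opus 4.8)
The plan is to combine Hodge theory on the closed manifold $M$ with the \W formula of Proposition \ref{prop:byparts}. First I observe that, since $\mathfrak{s}$ is invariant under $\Ad$ of the isometries of $\h^m$, the sub-bundle $\mathcal{S}\subset\E|_M$ is a flat metric bundle, and the operators $D,T,D^{*},T^{*}$ of (\ref{DTeqns}) all preserve the splitting $\E|_M\cong\mathcal{H}\oplus\mathcal{S}$; here one uses that $M$ is totally geodesic, so that the normal section $N$ is parallel and, under $R_{ni}\leftrightarrow e_i$, the sub-bundle $\mathcal{B}$ carries exactly the Levi-Civita connection of $TM$. Hence $\lap_\E$ restricts to an elliptic operator on $\Omega^1(M;\mathcal{S})$, and Hodge theory identifies $H^1(M;\mathcal{S})$ with the space of harmonic $\mathcal{S}$-valued $1$-forms, i.e. those $\omega$ with $d_\E\omega=0=\delta_\E\omega$.

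Next I apply the \W formula on the closed manifold $M$ to the $\mathcal{S}$-summand. The boundary term (\ref{Weitzbdry}) is absent and the left side of (\ref{Weitzformula}) vanishes for a harmonic $\omega$, so
\[
0=\|D\omega\|^2+\|D^{*}\omega\|^2+(H\omega,\omega)=\|D\omega\|^2+\|D^{*}\omega\|^2+\|T\omega\|^2+\|T^{*}\omega\|^2,
\]
using that $(H\omega,\omega)=(TT^{*}\omega+T^{*}T\omega,\omega)=\|T^{*}\omega\|^2+\|T\omega\|^2$ because $\ad(E_j)$ is fiberwise self-adjoint. Therefore $D\omega=D^{*}\omega=T\omega=T^{*}\omega=0$. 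Now write $\omega=N\otimes\eta+\omega_{\mathcal{B}}$, where $\eta$ is a real $1$-form on $M$ and, via $R_{ni}\leftrightarrow e_i$, $\omega_{\mathcal{B}}$ is a $TM$-valued $1$-form $B$, equivalently a section of $\Hom(TM,TM)$; it remains to decode the four vanishing equations in terms of $(\eta,B)$.

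The heart of the matter is the pointwise linear algebra of $\ad(E_j)$ on $\mathfrak{s}=\langle N,R_{ni}\rangle$. Using $[E_j,N]=-R_{nj}$ and $[E_j,R_{ni}]=-\delta_{ij}N$ for tangent indices $i,j$, a direct computation gives that $T^{*}\omega$ has $N$-component proportional to $\tr B$ and $\mathcal{B}$-component proportional to the metric dual of $\eta$, while the $N$-component of $T\omega$ is the skew-symmetric part of $B$ (regarded as a $2$-form) and its $\mathcal{B}$-component is built from $\eta$. Thus $T\omega=0=T^{*}\omega$ forces $\eta=0$ and forces $B\in\Hom(TM,TM)$ to be symmetric and trace-free; and once $\eta=0$ the equations $D\omega=0$, $D^{*}\omega=0$ reduce precisely to $DB=0$, $D^{*}B=0$. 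Conversely, given any symmetric trace-free $TM$-valued $1$-form $B$ with $DB=0=D^{*}B$, the $\mathcal{S}$-valued form $\omega$ with vanishing $N$-component and $\mathcal{B}$-part $B$ has $T\omega=T^{*}\omega=0$, hence $d_\E\omega=D\omega=DB=0$ and $\delta_\E\omega=D^{*}\omega=D^{*}B=0$, so $\omega$ is harmonic. Therefore $\omega\mapsto B$ is a linear isomorphism from the space of harmonic $\mathcal{S}$-valued $1$-forms onto the space described in the statement, and composing with the Hodge isomorphism proves the theorem.

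I expect the bracket bookkeeping of the third step to be the only genuine obstacle: one must compute $\ad(E_j)$ on $\mathfrak{s}$ with correct signs and constants so as to see that, on $1$-forms, $\ker T\cap\ker T^{*}$ is exactly the symmetric trace-free tensors. This is precisely what distinguishes the $\mathcal{S}$-summand: unlike the operator $H$ on $\mathcal{H}$-valued forms, where Weil's positivity $(H\omega,\omega)\ge c\,(\omega,\omega)$ (valid for $m\ge3$) forces harmonic forms to vanish, the operator $H$ on $\mathcal{S}$-valued $1$-forms has a nontrivial kernel---the bending directions---and the theorem records exactly that kernel. A secondary point requiring care is checking that $D$ on the $\mathcal{B}$-summand introduces no terms beyond the Levi-Civita exterior covariant derivative on $TM$, which is where total geodesy of $M$ in $W$ is used.
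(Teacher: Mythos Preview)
Your proof is correct and follows essentially the same route as the paper's: both use Hodge theory on the closed manifold $M$ and the decomposition $\lap_\E=\lap_D+H$ with each summand non-negative, then identify the kernel of $H$ on $\Omega^1(M;\mathcal{S})$ as the symmetric trace-free part of $\Omega^1(M;\mathcal{B})\cong\Omega^1(M;TM)$. The only cosmetic difference is that the paper computes the eigenvalues of $H=TT^*+T^*T$ on the pieces $N$, multiples of the identity, skew part, and symmetric trace-free part, whereas you compute $T$ and $T^*$ separately and read off their joint kernel; these are the same linear-algebra computation packaged two ways. Your explicit verification of the converse (that a symmetric trace-free $B$ with $DB=0=D^*B$ is harmonic) is a nice addition the paper leaves implicit.
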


In this theorem the operators $D$, $T$, $D^*$, $T^*$, $d_\E$, and $\delta_\E$ are understood to live on $M$, so, in particular, the covariant derivatives and bracket operations in the formulae (\ref{DTeqns}) for these operators are only in directions tangent to $\h^m$.
Note that this theorem holds even
in the case $m=2$, where 
the $1$-form $B$ corresponds to a 
holomorphic quadratic differential, and we can view the general case as being some kind of 
higher dimensional infinitesimal Schwarzian derivative.

\begin{proof}
The Hodge Theorem says that we can represent the cohomology class 
$[{\hat \omega}_M]$ by a unique
harmonic element $B$. 
Harmonic here means
closed and co-closed with respect to the flat structure on $\mathcal{S}$ so that 
$d_\E B = 0 = \delta_\E  B$.
We can decompose $d_\E$ and $\delta_\E$  into $D + T$ and $D^{*} + T^{*}$.

The laplacian $\lap_\E  = d_\E  \delta_\E  + \delta_\E  d_\E$ decomposes into
$\lap_D + H$ where $\lap_D = D^{*} D + D D^{*}$ and $H = T^{*} T + T T^{*}$.
Via 
integration by parts, both $\lap_D$ and $H$ are non-negative operators.  We
will now identify the kernel of $H$.
To do this, note that $T$ and $T^{*}$ switch the two factors $N$ and $\mathcal{B}$
in the orthogonal decomposition of $\mathcal{S}$ and hence that
$T^{*} T$ and $T T^{*}$
preserve this decomposition.  It is straightforward to compute that on $N$ the operator 
$T^{*} T$ is multiplication by $m-1$ and $T T^{*}$ is multiplication by $1$.  So the sum is
multiplication by $m$.  Viewing an element of $\Omega^1(M;\mathcal{B}) \cong \Omega^1(M;TM)$ 
as an element of $\Hom(TM,TM)$, one computes that $T T^{*}$ annihilates the traceless part and is 
multiplication by $m$ on multiples of the identity transformation.  Also, $T^{*} T$ is multiplication by $2$ on 
the skew symmetric part and $0$ on the symmetric part.  Thus, the sum of the two operators is 
multiplication by $m$ on multiples of the identity, 
multiplication by $2$ on the 
skew symmetric part
and $0$ on the traceless, symmetric
part.  (This can also be computed directly using 
formula (4) in \cite{HK1}.)
We conclude that $H$ is positive semi-definite with kernel equal to the symmetric, traceless 
elements of $\Omega^1(M;\mathcal{B})$.  Note that such elements are, in fact, in 
the kernels of both $T$ and $T^{*}$ individually.  It follows that the harmonic elements 
$B$ of $H^1(M;\mathcal{S})$ satisfy $T B = 0 = T^{*} B$ and $DB = 0 = D^{*}B$.
\end{proof}

Now, given a harmonic element $B \in \Omega^1(M;\mathcal{S})$, we extend it to a 
harmonic element in $\Omega^1(M\times \R;\E)$  where $M \times \R$ is equipped with the
Fuchsian hyperbolic structure coming from the inclusion $\h^m \subset \h^n$. 
This extension can be defined by simply 
pulling back $B$ 
via the orthogonal 
projection map 
$\pi: M \times \R \to M$ onto the totally geodesic copy of $M = M \times \{0 \}$. 
A pull-back of a closed form (using the flat coefficients)
is always closed.  Whether or not a pull-back is co-closed depends on the underlying metrics and the map.

To see that the pull-back is co-closed in our case, we express everything 
in terms of an orthonormal frame. 
Given an orthonomal frame and co-frame $\{e_i\}$ and $\{\omega_i\}$ 
on $M$, the product structure determines tangent vectors and $1$-forms, $\{\pi^{*} e_i\}$ and 
$\{\pi^{*} \omega_i\}$ 
along the slices $M \times \{r\}$.  The corresponding orthonormal frames and coframes with respect
to the Fuchsian hyperbolic metric on $M \times \R$, denoted again by $\{e_i\}$ and $\{\omega_i\}$, equal
$\sech r \{\pi^{*} e_i\}$ and $\cosh r \{\pi^{*} \omega_i\}$.
We denote by $n$ the unit vector orthogonal to the slices $M \times \{r\}$, pointing in the positive direction along $\R$.

For a fixed value of $r$ the points on the hypersurface $M \times \{r\}$ have constant distance 
$|r|$ from the totally geodesic $M \times \{0\}$.  This implies that it is totally umbillic
with constant normal curvature $\tanh r.$  From this we conclude that $\del_{e_i} n = \tanh r \, e_i$ 
and that $(\del_{e_i} e_j) \cdot n = - \tanh r \, \delta_{ij}$ at any point of $M \times \{r\}.$
Furthermore, $\del_n e_i = 0 = \del_n n.$ 
The values $c_{ijk} = \langle \nabla_{e_j} e_i , e_k \rangle$ depend on our choice of frame.
However, when the frame on $M \times \{r\}$ is determined as above
by the frame on $M \times \{0\}$, the value of any $c_{ijk}$ at a point 
$(x,r)$ equals $\sech r$ times its value at $(x,0)$.  In particular, if we choose a frame 
near $(x,0)$ using geodesic coordinates, the $c_{ijk}$ will equal $0$ at $(x,r)$ for
all values of $r$.

With these observations we can now describe our extended form and show that it has the properties we want.

\begin{prop} \label{pullback}
Let $B \in \Omega^1(M;\mathcal{B})$ be closed and co-closed where $M$ is an $m$-dimensional
hyperbolic manifold, $m\geq 2$.  Suppose that
we can write $B = \sum b_{ij} R_{ni} \otimes \omega_j $, where the 
$b_{ij}$ are functions on $M$ which determine a matrix that is symmetric and traceless at each point.  
With respect to the extended orthonormal frame and coframe on $M \times \R$, let
$\widetilde B = \sum b_{ij} R_{ni} \otimes \omega_j$ and
$\widetilde A = -\tanh r \,  \sum b_{ij} E_i \otimes \omega_j$.
Then the $\E$-valued $1$-form $\omega_0 = \widetilde B + \widetilde A$ is 
closed, co-closed, equal to $\pi^{*} B$, and satisfies $D^{*} \omega_0 = 0$. 
\end{prop}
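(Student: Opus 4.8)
The plan is to verify the four assertions in the order $\omega_0 = \pi^{*}B$, then $d_\E\omega_0 = 0$, then $T^{*}\omega_0 = 0$, and finally $D^{*}\omega_0 = 0$; co-closedness is then automatic, since $\delta_\E\omega_0 = D^{*}\omega_0 + T^{*}\omega_0$. For the first point, pulling a section of the flat bundle $\mathcal{E}$ over $M$ back along $\pi$ just means precomposing with $\pi$ at the level of $\mathfrak{g}$, so the only input needed is how the non-flat frame elements $R_{ni}$ and $\omega_j$ transform. The coframe transforms as recorded before the proposition, $\pi^{*}\omega_j = \sech r\,\omega_j$, and a short computation — in the hyperboloid model with $\h^m = \h^n \cap \{x_n = 0\}$, or directly from the geometry of the normal geodesics leaving $\h^m$ — gives $\pi^{*}R_{ni} = \cosh r\,R_{ni} - \sinh r\,E_i$: viewed from the translated point $(x,r)$, a rotation fixing $\h^m$ has acquired a translational part. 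Since the coefficients $b_{ij}$ are pulled back and hence constant in $r$, multiplying out yields $\pi^{*}B = \sum b_{ij}(R_{ni} - \tanh r\,E_i)\otimes\omega_j = \widetilde B + \widetilde A = \omega_0$. Closedness is then free, because $d_\E$ commutes with $\pi^{*}$ (both being defined via the flat structure): $d_\E\omega_0 = \pi^{*}d_\E B = 0$.

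Next, $T^{*}$ is a pointwise algebraic operator; on $M\times\R$ it is $i(n)\ad(N) + \sum_i i(e_i)\ad(E_i)$, and since neither $\widetilde B$ nor $\widetilde A$ has a $dr$ component, the $i(n)\ad(N)$ summand drops, leaving exactly the operator from the proof of Theorem \ref{harmonicbends}. It kills $\widetilde B$ because $b$ is symmetric and traceless (such elements lie in the kernel of $T^{*}$ by the computation there), and it kills $\widetilde A$ because $T^{*}\widetilde A = -\tanh r\sum_{i,j} b_{ij}[E_j,E_i] = 0$ by symmetry of $b$ together with $[E_j,E_i] = -[E_i,E_j]$. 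Hence $T^{*}\omega_0 = 0$ and $\delta_\E\omega_0 = D^{*}\omega_0$.

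The substance of the proposition is $D^{*}\omega_0 = 0$. I would expand $D^{*} = -\sum_k i(e_k)\nabla_{e_k} - i(n)\nabla_n$ on $\widetilde B + \widetilde A$ using the geometry recorded above the proposition: $\nabla_n n = \nabla_n e_i = 0$, $\nabla_{e_k}n = \tanh r\,e_k$, $(\nabla_{e_k}e_i)\cdot n = -\tanh r\,\delta_{ik}$, and the option of making all $c_{ijk}$ vanish at the chosen point by using geodesic coordinates on $M$. The $n$-derivative term drops ($\nabla_n\widetilde B = 0$, and $\nabla_n\widetilde A = -\sech^{2}r\sum b_{ij}E_i\otimes\omega_j$ has no $dr$ component), so everything reduces to the $e_k$-derivatives. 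Differentiating $R_{ni}$, $E_i$, and the coframe in the slice directions each produces a $\tanh r$ contribution (through $\nabla n$ and the normal component of $\nabla e$), plus $c$-terms that vanish in geodesic coordinates; one finds these curvature contributions organize into a multiple of $\tanh r\sum_{i,j}b_{ij}[E_j,E_i]$ (zero, as in the $T^{*}$ step) and a multiple of $\tanh^{2}r\,(\tr b)\,N$ (zero by tracelessness). The only surviving terms are those in which $\nabla_{e_k}$ lands on the coefficient $b_{ij}$; after accounting for the rescaling $e_j = \sech r\,\pi^{*}e_j^{M}$ relating the Levi-Civita connection of the Fuchsian metric along a slice to that of $M$, these assemble into $\sech r$ times $D^{*}_{M}B$, which vanishes by Theorem \ref{harmonicbends}. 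Thus $D^{*}\widetilde B = D^{*}\widetilde A = 0$, completing the proof. The one genuinely delicate point is this last computation: there is no conceptual obstacle, but one must keep the conformal rescaling of the frame straight so that the coefficient-derivative terms really do reconstitute $D^{*}_{M}B$, and must check that every $\tanh r$ term spawned by differentiating $R_{ni}$, $E_i$, or the coframe either cancels a term coming from the other summand or dies by the symmetry or tracelessness of $b$.
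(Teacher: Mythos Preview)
Your proof is correct and follows essentially the same approach as the paper: identify $\omega_0$ with $\pi^{*}B$ via the transformation $R_{ni}\mapsto \cosh r\,R_{ni}-\sinh r\,E_i$ and $\omega_j\mapsto \sech r\,\omega_j$ to get closedness for free, kill $T^{*}\omega_0$ by symmetry and tracelessness of $b$, and verify $D^{*}\omega_0=0$ by working in geodesic coordinates so that the only surviving terms are curvature contributions (cancelled by symmetry/tracelessness) and coefficient derivatives (assembling into $\sech r\cdot D^{*}_M B=0$). The paper's argument is organized slightly differently---it treats $\widetilde B$ and $\widetilde A$ separately throughout and does not explicitly isolate the $\nabla_n$ term---but the computations and the ideas behind them are the same.
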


In other words, $\widetilde B$ trivially extends $B$ 
to each slice $M \times \{r\}$ using the extended 
frame and coframe and, under the identification of $\mathcal{B}$ with $TM$,  $\widetilde A$ simply equals 
$-\tanh r \, \widetilde B$. 

\begin{proof}
First, we claim that the $1$-form $\omega_0 = \widetilde B + \widetilde A$ equals the pull-back $\pi^{*} B$.  To see this
let $x\in M$ and $r \in \R$.  Observe that the element $R_{ni}$ of $\E$ at $(x,0)$ 
equals $\cosh r \, R_{ni} - \sinh r \, E_i$ at $(x,r)$.  On the other hand, the $1$-form 
$\omega_j$ at $(x,0)$ pulls back to $\sech r \, \omega_j$ at $(x,r)$.  The 
formula $\pi^{*} B = \widetilde B + \widetilde A$ follows, implying that $\omega_0$ is closed.

To see that it is co-closed refer to the formula (\ref{DTeqns}) for $\delta_\E = T^* + D^*$ on $\E$-valued $1$-forms and see how the value of $\delta_\E \omega_0$ differs at $(x,r)$ from that at $(x,0)$.  
First, consider the algebraic operator $T^{*}$.  When applied to $\widetilde B$ on $M \times \R$, 
the only difference from being applied to $B$ on $M$ is the addition of the term corresponding to
the normal direction.  Since there is no $dr$ term in $\widetilde B$ this adds nothing
and $T^{*} \widetilde B = 0$ since $T^{*} B  = 0$.  Also,
$T^{*} (E_i \otimes \omega_j) =  [E_j,E_i] = R_{ji}$. Since $b_{ij} = b_{ji}$
and $R_{ij} = - R_{ji}$ we obtain $T^{*} \widetilde A  = 0$.

Now consider the differential operator $D^{*}$.  At any point $(x,0)$ we can choose
a frame and coframe based on geodesic coordinates. As noted above this implies that 
$\nabla_{e_i} e_j = -\tanh r \, \delta_{ij} \, n$ for all $i,j $ with respect to the 
extended frame at $(x,r)$ for any $r$.
One computes that  
\[ D^{*}(R_{ni} \otimes \omega_j) = 
-\nabla_{e_j} R_{ni} + (\sum_k \omega_j(\nabla_{e_k} e_k)) \, R_{ni}.\]
Because of our choice of frame the second term vanishes.
Furthermore, 
\[\nabla_{e_j} R_{ni} =  (\tanh r \, R_{ji} + \sum_k c_{ijk} R_{nk})\]
where $c_{ijk} = \langle \nabla_{e_j} e_i , e_k \rangle.$
In our choice of frame only the first term is nontrivial.  Thus, using geodesic 
coordinates, we obtain
\[ D^{*}(b_{ij} (R_{ni} \otimes \omega_j)) = - ( b_{ij} \tanh r \, R_{ji} + 
(\nabla_{e_j} b_{ij}) R_{ni}).\] 
Summing over $i,j \in {1,2,\cdots, m}$, the contribution from the terms involving $R_{ji}$ equals $0$ because $b_{ij} = b_{ji}$ and $R_{ij} = -R_{ji}$.
Since the functions $b_{ij}$ are independent of $r$ and the orthonormal
frame at $(x,r)$ equals $\sech r$ times the frame pulled-back from $(x,0)$, 
the terms involving derivatives of $b_{ij}$ at $(x,r)$ equal $\sech r$ times 
those at $(x,0)$.  
Since $D^{*} B = 0$ on $M$ we conclude that $D^{*} \widetilde B = 0$ on $M \times \R$.

Similarly, to compute $D^{*} \widetilde A$ we observe that
\[ D^{*}(E_i \otimes \omega_j) = -\nabla_{e_j} E_i 
+ (\sum_k \omega_j(\nabla_{e_k} e_k)) \, E_i.\]
In geodesic coordinates the second term is trivial and the first term
is nontrivial only when $i=j$ in which case it equals $\tanh r \,  N.$
We obtain
\[D^* (\tanh r \, b_{ij} \, (E_i \otimes \omega_j)) = 
(\delta_{ij} b_{ij}) (\tanh r)^2 \, N - (\nabla_{e_j} b_{ij})\, \tanh r \, E_i\] 
Summing over $i,j$, the terms involving $N$ only occur when
$i = j$ so the contribution from them is zero because $B$ has trace zero. 
The contribution from the terms involving derivatives of the
$b_{ij}$ equals zero as before.

\end{proof}

We will refer to $\omega_0$ as the model harmonic form near the boundary $M$ of $W$. 


\section{Boundary value problem} \label{sec:bdryvalue}

In this section we will describe and solve a boundary value problem that will provide us
with a harmonic representative $\omega$ for any class in $H^1(W;\E).$  The boundary conditions
have been chosen so that the boundary term (\ref{Weitzbdry}) in the \W formula is reasonably computable.
In the next section we will show that the boundary term of (\ref{Weitzbdry}) is $0$, which will imply that $\omega$ is trivial. 

Given an element $[\hat \omega] \in H^1(W;\E)$ where $W$ is a 
hyperbolic $(m+1)$-dimensional manifold 
with totally geodesic boundary $M^m$, we know that, when $m \ge 3$, 
the restriction $[{\hat \omega}_M] \in H^1(M;\E)$
lies completely in $H^1(M; \mathcal{S})$.  By 
the results of the previous section, there is a unique
harmonic $B \in \Omega^1(M;\mathcal{B})$ representing this class.  Under the identification
of $\mathcal{B}$ with $TM$, and $\Omega^1(M;TM)$ with $\Hom(TM,TM)$, the linear transformation
corresponding to $B$  is symmetric and traceless.   
By Proposition \ref{pullback}, $B$ determines a model harmonic form $\omega_0$ defined in a neighborhood of $M$. 
The element $[{\hat \omega}] \in H^1(W;\E)$ 
will be cohomologous to $\omega_0$ 
in a neighborhood of the boundary of $W$.
We can therefore 
assume that $[{\hat \omega}]$ is represented by a form ${\hat \omega}$
which 
agrees with $\omega_0$ 
in a neighborhood of the boundary $M$.
In particular, it is closed and co-closed near $M$.

However, ${\hat \omega}$ will not necessarily 
be harmonic on all of $W$.  It is closed, 
but $\delta_\E  {\hat \omega}$ will generally not be trivial
away from the boundary.  To find a harmonic representative we need to find a section
$s$ of the bundle $\E$ over $W$ satisfying
\begin{equation}\label{correctionprob}
\delta_\E  d_\E  s = -\delta_\E  {\hat \omega}.
\end{equation}
Then $\omega = {\hat \omega} + d_\E  s$ will be a harmonic
representative of our given cohomology class.

There are many solutions to (\ref{correctionprob}). We 
will require $s$ to satisfy
certain boundary conditions that will make the operator $\lap_\E  = \delta_\E  d_\E$ self-adjoint 
and elliptic with trivial kernel.  Since we want to apply the \W formula (\ref{Weitzformula}) to this
harmonic representative, our choice of boundary conditions will also be motivated
by the resulting boundary term in that formula.  Recall that the 
boundary term (\ref{Weitzbdry})
is an integral over $M$ with integrand 
$-({*}T \omega \wedge \omega + T^{*} \omega \wedge {*}\omega).$  In this section we
will show that our chosen boundary conditions imply that
$T^{*} \omega = 0$, greatly simplifying 
our computation of the boundary integral in the next section.

By Lemma \ref{lem:canon_lift} we can assume ${\hat \omega}$ admits a canonical lift, meaning it is locally the image under $d_\E$ of a canonical lift.
Equivalently, we can assume the translational part of ${\hat \omega}$,
viewed as a element of $\Hom(TW,TW),$ is symmetric.

The operator $\lap_\E  = \delta_\E  d_\E$ acting on sections preserves the decomposition 
of $\E$ into translational and rotational parts.  It follows that
solving equation (\ref{correctionprob}) is equivalent to solving
it separately for the translational and rotational parts; in other words, to solving the corresponding
problem for vector fields and for sections of $\so(TW).$
Using canonical lifts, 
we will see below that it suffices to solve the vector field problem.
In particular, when solving (\ref{correctionprob}) we will 
be able to assume that the global section $s$ is a canonical lift.  This 
will imply that the translational part of $d_\E s$ will be symmetric.  Thus, the harmonic representative 
$\omega = {\hat \omega} + d_\E s$ will also have this property.

Using the metric on $TW$ we can identify vector fields with $1$-forms and
linear transformations with $(0,2)$-tensors. 
We will use the convention that a linear transformation $\phi$ is identified with the $(0,2)$-tensor $\theta$ defined by
\[\theta (u_1, u_2) := \langle \phi u_1, u_2 \rangle.\]
Notice that a skew symmetric transformation is identified with a $2$-form.  
For example, with this convention $R_{ij}$ is identified with $\omega_j \wedge \omega_i$.
A section of $\E$ can then be identified 
with a pair $(\tau, \widetilde \tau)$ consisting of a $1$-form and a $2$-form. 
Via 
this identification
a canonical lift is identified with the pair 
$(\tau, \frac {1}{2} d \tau)$, 
where $\tau$ is dual to $u$ and $d$ is exterior differentiation on $W$.  
(In low dimensions the skew symmetric part of $Du$ is identified with 
$\frac {1}{2} \curl(u).$)

On the translational part of $\E$, the operator $T^{*}T$ is multiplication by $m$, so
$\lap_\E  u = D^{*}D u + T^{*}T u = D^{*} D u + m u$.  Similarly, on the rotational part of $\E$, 
$T^{*}T$ is multiplication by $2$ so that $\lap_\E  \widetilde u = D^{*}D \widetilde u + 2 \widetilde u.$ 
If we view a section of $\E$ as a pair consisting of a vector field and a section of $\so(TW)$, 
the operator $D^{*}D$ equals $\nabla^{*} \nabla$ on each component of the pair.
Under the identification of vector fields with $1$-forms and sections of $\so(TW)$ with $2$-forms,  
$\nabla^{*} \nabla$ becomes $\nabla^{*} \nabla$ on these forms.  
The fact that the Ricci curvature of $\h^{n}$
equals $m$ implies that $\nabla^{*} \nabla$ on $1$-forms equals 
$ \lap + m$ where $ \lap =  \delta  d +  d  \delta$
is the usual (exterior) laplacian on differential forms.  
More generally, for an $n$-dimensional space of constant
curvature $-1$, $\nabla^{*}\nabla$ on $p$-forms equals 
$ \lap + p (n-p)$ \cite[Ch.7.4]{Pet}. 
When $p=2$ and $n = m+1$, $p (n-p) = 2 (m-1) = 2m-2.$
Thus, under the identification of $u$ and $\widetilde u$ with the forms
$\tau$ and $\widetilde \tau$, the operator $\lap_\E$ corresponds to
$ \lap + 2m$ in both cases.

Being a canonical lift means that the
$2$-form $\widetilde \tau = \frac {1}{2} d \tau.$  
Since $d$ commutes with $\lap + 2m$, it follows that $\lap_\E$ 
preserves the property of being a canonical lift.
We are assuming that ${\hat \omega}$ is the local image under $d_\E$ of a canonical lift.  
It follows that $\delta_\E {\hat \omega}$ is a canonical lift. 
If $\zeta$ denotes the $1$-form dual to the translational part of 
$ - \delta_\E {\hat \omega}$, then the $2$-form corresponding to its rotational part 
equals $\frac{1}{2}  d \zeta$.  We conclude that in order to solve equation (\ref{correctionprob}), 
it suffices to solve
\begin{equation}\label{formprob} 
(\lap + 2m) (\tau) = \zeta
\end{equation}
for a $1$-form $\tau$ on $W$. The solution  to (\ref{correctionprob}) will then be the
section $s$ which is the canonical lift of the vector field $u$ dual to $\tau.$

We will now describe our boundary conditions both in terms of the translational
part $u$ of the section $s$ and the $1$-form $\tau$ dual to $u$.  At any point
on the boundary $M$ of $W$ we can decompose vector fields and $1$-forms into
their normal and tangential components. Using surfaces equidistant from the boundary and 
orthogonal projection, this decomposition extends to a neighborhood of the boundary.  
Choose the unit normal $n$ to be pointing outward and the normal coordinate to be denoted by $r$, 
so that $dr$ is dual to $n$.  Near the boundary write $u = (h,v)$, where $h$ is a function 
and $v$ is a vector field on $M$.  The dual $1$-form is $\tau = h dr + \sigma$,
where $\sigma$ is dual to $v$ on $M$.

Our boundary conditions are
\begin{equation}\label{vfbdcond1} 
h = 0 
\end{equation}
\begin{equation}\label{vfbdcond2} 
\del_n v = 0.
\end{equation}
In other words, the normal component of $u$ is zero and the normal derivative
of the tangential part of $u$ is zero.  

We 
must show that solving (\ref{correctionprob}) subject to 
boundary conditions (\ref{vfbdcond1}) and (\ref{vfbdcond2}) is an elliptic problem
with a unique solution.  Furthermore, we need to show that the resulting harmonic
$\E$-valued $1$-form $\omega = {\hat \omega} + d_\E  s$ on $W$ 
satisfies $T^{*} \omega = 0$.

Let $s$ be a global section of $\E$ and write its decomposition
into translational 
and rotational parts as $s = (u,\widetilde u)$.  Decompose
$u = (h,v)$ into its normal and tangential components.  
Let $\tau$ be the $1$-form dual to $u$ and $\widetilde \tau$ 
the $2$-form associated to $\widetilde u$ (which is a section of $\so(TW)$).  
Assume that 
$s$ is a canonical lift so that $2 \widetilde \tau = d \tau$.  
Decompose 
$\tau$ and $\widetilde \tau$ into their normal and tangential parts as $\tau = h dr + \sigma$
and $\widetilde \tau = dr \wedge \widetilde \sigma + \widetilde \phi$.  Here $\sigma$ and $\widetilde \sigma$
are tangential $1$-forms and $\widetilde \phi$ is a tangential $2$-form.  Since we
are assuming that
$2 \widetilde \tau = d \tau$, we obtain
$2 \widetilde \sigma = \del_n \sigma - d_M h$, 
where $d_M$ is the exterior derivative in the tangential directions.  

Using 
canonical lifts, our boundary problem (\ref{correctionprob}) reduces to the analogous problem for the
vector field part of the section $s$, subject to 
boundary conditions 
(\ref{vfbdcond1}) and (\ref{vfbdcond2}).  With the above notation, the boundary conditions 
for the dual problem (\ref{formprob}) on $1$-forms are:

\begin{equation}\label{formbdcond1} 
h = 0 
\end{equation}
\begin{equation}\label{formbdcond2} 
\del_n \sigma = 0.
\end{equation}
Using equation (\ref{formbdcond1}) we have that
$d_M h \equiv 0$ on the boundary, so
we could equivalently replace the second condition with

\begin{equation}\label{formbdcond3} 
\widetilde \sigma = 0. 
\end{equation}

In the formalism of differential operators on the bundle of $\mathcal{C}^{\infty}$ $1$-forms
we 
write the boundary value problem as solving $P(\tau) = -\zeta$ subject
to the conditions $p_1(\tau) = 0 = p_2(\tau)$ where $(P;\{p_1,p_2\})$ is the operator
\begin{align*}
P(\tau) &= (\lap +2m) (\tau)\\
p_1(\tau) &= h\\
p_2(\tau) &=   \del_n \sigma. 
\end{align*}
To show that this boundary problem has a unique solution, it suffices to show that this
differential operator is elliptic and that, on the subspace
where $p_1(\tau) = 0 = p_2(\tau)$, it is self-adjoint and positive.

These are standard facts because the main operator $P$ has the same symbol as the laplacian
on $1$-forms, and because, near the boundary, the operator decomposes into the 
normal part, where the boundary condition is the Dirichlet condition, and the tangential part, where the boundary condition is the Neumann condition.  However, for completeness, we include the argument here. 

We will first show that this operator is self-adjoint and positive.
Recall that
$\lap =  d  \delta +  \delta  d$.  Then
for real-valued $1$-forms $\tau$ and $\psi$ 
on $W$, integration by parts yields: 
\[ \langle  \lap \tau, \psi\rangle = \langle  d \tau,  d \psi \rangle ~+~
\langle  \delta \tau,  \delta \psi \rangle + \beta(\tau,\psi), \]
where $\langle \cdot, \cdot \rangle$ is the $L^2$ inner product on $W$ and
$\beta(\tau,\psi)$ is a term given by an integral over the boundary. 

The operator $\lap$ (hence $\lap + 2m$) will be self-adjoint as long as
\[ \langle  \lap \tau, \psi\rangle ~-~ \langle \lap \psi, \tau\rangle
~=~ \beta(\tau,\psi)\,-\, \beta(\psi, \tau) ~=~ 0. \]
The operator $\lap + 2m$ will have trivial kernel as long as
$\langle (\lap +2m) \tau, \tau \rangle > 0$
 for any nonzero 
$\tau$.  Letting $\tau = \psi$ above,  
this will be guaranteed as long as we have
$\beta(\tau,\tau) \geq 0$.

Using Green's identity, we obtain the following formula for the boundary term, where
the boundary is oriented with respect to the outward normal:
\[ \beta(\tau, \psi) ~=~  \int_M ~  {*} d\tau \wedge \psi ~+~  \delta \tau \wedge {*} \psi.\]
As before we decompose $\tau$ as $\tau = h \, dr + \sigma$ and  
$\psi$ as $\psi = k \,dr + \phi$.  Boundary condition (\ref{formbdcond1}) implies that
$h=0=k$, so that the
tangential parts of $ {*} \tau$ and $ {*} \psi$ are trivial.  
Similarly, boundary condition (\ref{formbdcond3}) implies that the normal parts of 
$d \tau$ and $d \phi$, hence the tangential parts of 
$ {*}  d \tau$ and $ {*}  d \phi$, are trivial.  It is then clear that
$\beta (\tau, \psi) = 0$ for $\tau$ and $\psi$ satisfying the boundary conditions and the operator
is thus self-adjoint and positive.

To see that the boundary conditions lead to an
elliptic boundary value problem, we consider the operators,
$P$, $p_1$, and $p_2$, 
where the ranges of the two boundary operators are
the sub-bundles of normal and tangential parts, respectively, of $1$-forms on the boundary.
We then take the top order terms of each of these operators.  It is a subtlety
of differential operators on bundles
that ellipticity may depend on the choice of decomposition of the target
bundle, since this affects what the top order terms are.
In our case, the boundary operators themselves decompose 
(ie., you can decompose the domain in the same way).

To show that the system is elliptic, one considers the symbols of the operators.
This amounts to looking at the system in local coordinates, fixing the coefficients
of the operators by evaluating at a boundary point, and taking only the top
order terms in each operator.  One then considers the homogeneous, constant
coefficient problem in the upper half space of $\R^n$ given by these simplified
operators.  

The symbol of $\lap + 2m$ is simply the symbol of the  
standard laplacian on $\R^n$, which is well-known to be elliptic.
Clearly $p_1$ is of order $0$ and $p_2$ is of order $1$.  Taking the top order terms,
the operators become the same operators viewed in the upper half-space
$\R^n_+ = \{(x_1, \ldots, x_m,t) \, | \, t \geq 0\}$ with the standard Euclidean metric.
We are now left with the simplified system of solving
$\lap \tau = 0$ in the upper half-space with homogeneous boundary
conditions determined by these Euclidean boundary operators.
By definition,
the original system is elliptic if and only if
this simplified  system has no nontrivial bounded solutions.
It suffices to show that there are no nontrivial bounded
solutions $\tau(x,t)$ of the form
$f (t) e^{i(\zeta \cdot x)},$ where $x = (x_1,\ldots, x_m)$ and $\zeta$ is any 
nonzero vector in the boundary hyperplane.  The solutions $f (t)$ 
to $\lap \tau = 0$ for a given choice
of $\zeta$ are linear combinations of $e^{|\zeta| \, t}$ and $e^{-|\zeta| \,t}$.  
Since we are interested in bounded solutions, only scalar multiples of the latter function
will appear.  In particular, we have that 
$f' (t) = -|\zeta| f (t)$. 

We decompose $f$ into its normal and tangential components, which
we denote by $h \, dr$ and $\sigma$, respectively.  The boundary conditions are then 
$h(0) = 0$ and $\sigma^{\prime}(0) = 0$.  But since $\sigma ^{\prime}(t) = -|\zeta| \sigma(t)$,
the latter condition implies that $\sigma(0) = 0$ and we conclude that any solution must
be trivial.

Thus we have shown that our boundary value problem is uniquely solvable.  Standard elliptic
theory implies that the solution will be smooth, even on the boundary.  Having solved the 
equation $\delta_\E  d_\E  s = - \delta_\E  {\hat \omega}$ we define
$\omega = {\hat \omega} + d_\E s.$   Then $\omega$ is a closed and co-closed representative in 
the same comology class as ${\hat \omega}$.  We will now
show that $T^{*} \omega = 0.$

Define the trace of $\omega$ as the trace of its translational part, viewed as an element of $\Hom(TW,TW)$.  We denote this trace function on $W$ by $\tr$.  To prove $T^{*} \omega = 0$ it will suffice to show that $\omega$ has trace zero. 
Since $\omega$ is closed, it is the image under $d_\E$ of a locally defined section
of $\E$.  The trace is just the divergence of the locally defined vector field
which is the translational part of this locally defined section.  It also equals 
$-\delta$ of the local $1$-form which is dual to this local vector field.  
Note that, although this vector field and its dual $1$-form are only locally defined, 
the trace is, nonetheless, globally defined.

Our proof that $\omega$ has trace zero is similar to an argument of \cite{HK2}. 
We know that 
$\omega$ is harmonic.  Since $\omega$ is closed,
it is locally equal to $d_\E$ of a local section, and being co-closed as
well means that $\lap_\E$ of this local section is zero. 
$\lap_\E$ preserves the decomposition into translational and rotational parts, so $\lap_\E$ of the corresponding 
local vector field is zero. 
As we showed above, this means that 
$  d  \delta +  \delta  d + 2m$ applied to the dual local $1$-form is zero. 
Applying $ \delta$ and using that $\tr$ equals $-\delta$ of this 
local $1$-form, we conclude that $(\delta  d  + 2m)\tr  = 0$. 
This equation holds on all of $W$. Taking 
the $L^2$ dot product on $W$ of $\tr$
with this equation, we conclude that $\langle ( \delta  d  + 2m)\tr , \tr \rangle = 0.$ 
Integrating by parts gives
\begin{equation}\label{eqn:tr}
\langle  d \tr,  d \tr \rangle + 2m \langle \tr, \tr \rangle - 
\int_M~   \tr \wedge  {*}  d \tr ~=~ 0
\end{equation}
where the boundary is oriented using the outward normal.
Below we will show that the integral term of equation (\ref{eqn:tr}) 
equals $0$.  It will follow that $\tr = 0$
on all of $W$. 

Since ${\hat \omega}$ equals the 
model harmonic $\E$-valued $1$-form $\omega_0$ of Proposition \ref{pullback}  
in a neighborhood of the boundary, and since model forms all have trace zero, 
the trace 
of $\omega = {\hat \omega} + d_\E  s$ is just the trace of 
$d_\E  s$ in a neighborhood of the boundary.  
In turn, this is the
divergence of the vector field $u$ which is the translational 
part of $s$;
this equals $-\delta \tau$, where 
$\tau$ is the $1$-form dual to $u$.

Since $\tr = -\delta \tau$ in a neighborhood of the boundary, we can
use the boundary conditions on $\tau$
when computing the boundary integral. Also, since ${\hat \omega}$ is harmonic in
a neighborhood of the boundary, $\delta_\E  {\hat \omega} = 0$ in a neighborhood
of the boundary which in turn implies that $\delta_\E  d_\E  s$  is zero near the boundary.
This implies that
$(\lap + 2m) \tau = 0$ near the boundary.

The term $ {*}  d \tr$ in the integrand becomes $- {*} d  \delta \tau$.
Because 
$\tau$ satisfies $( d  \delta +  \delta  d + 2m) \tau = 0,$
this equals $ {*}( \delta  d + 2m) \tau$.  Since the integral is over the
boundary, only the tangential part of the integrand appears.  The tangential
part of ${*} \delta  d \tau =  d {*} d \tau$ equals
$d_M$ of the tangential part of ${*} d \tau$, 
where $d_M$ is the exterior derivative on the boundary.  But, the tangential part 
of $ {*}  d \tau$ equals ${*} (dr \wedge \widetilde \sigma)$, where the normal part of $ d \tau$ 
equals $dr \wedge \widetilde \sigma$.  However, boundary condition (\ref{formbdcond3}) implies
$\widetilde \sigma = 0$ on $M$.
Similarly, the tangential part of $ {*} \tau = {*} (h dr + \sigma)$ equals $h$ times  
the volume form of $M$. 
We know $h=0$ on $M$ 
by (\ref{formbdcond1}).  Thus, the boundary integral is zero and
we conclude that the infinitesimal deformation $\omega$ has trace zero, as desired. 

Finally, we need to show that, if $\omega$ is harmonic, has trace zero, and comes from 
a canonical lift, then $T^{*} \omega = 0$ (hence $D^{*} \omega = 0$ since $\delta_\E  \omega = 0$).
This is really an algebraic fact arising from the basic identities coming from being harmonic
and a canonical lift.

Because $\omega$ is a canonical lift, its translational part is symmetric 
when viewed as a section of $\Hom(TW,TW)$.  But 
$T^{*}(\sum_{i,j} a_{ij} E_i \otimes \omega_j) = \sum_{i,j} (a_{ji} -a_{ij}) R_{ij}$
so it is trivial on symmetric elements of $\Hom(TW,TW)$.
Next we need to compute $T^{*}$ on the rotational part of $\omega$.  Since $d_\E  \omega = 0$
it is locally the image under $d_\E$ of a section of $\E$.  Expressing this section as a pair 
$(u,\widetilde u)$ consisting of its translational  and rotational parts, we see that the rotational 
part of $\omega$ equals $Tu + D\widetilde u$.  Thus, we want to compute 
$T^{*}T u + T^{*}D \widetilde u$.

One computes that $T^{*}T(E_i) = \sum_k [E_k,[E_k,E_i]] = m E_i$, so that 
$T^{*}T(u) = m u$.  To compute $T^{*} D \widetilde u$, recall that being a canonical
lift means that the skew symmetric transformation $\widetilde u$ corresponds to the $2$-form
$\frac {1}{2} d \tau$, where $\tau$ is the $1$-form dual to $u$. 
Below we will see that the vector field obtained by applying
$T^{*}D$ to a section of $\so(TW)$ is dual to the $1$-form obtained by applying $\delta$
to the corresponding $2$-form.  Assuming this, we have that $T^{*} D \widetilde u$ is dual
to $\frac {1}{2} \delta d \tau$.  We conclude that $T^{*}$ of the rotational
part of $\omega$ is dual to $\frac {1}{2} (\delta d + 2 m) \tau$.  Since
$\omega$ is harmonic this equals $- \frac {1}{2} d \delta \tau$.  But 
$-\delta \tau $ equals the divergence of $u$ which, by definition, equals the 
trace of $\omega.$  Therefore $-\delta \tau = 0$ on 
all of $W$ so that $- \frac {1}{2} d \delta \tau = 0$, as desired.

To see the relation between $\delta$ and $T^{*} D$, we recall 
that, under the identification between sections of $\so(TW)$ and $2$-forms, the element $R_{ij}$
corresponds to $- \omega_i \wedge \omega_j$.  
We first compute $T^{*}D(a R_{ij})$ where $a$ is a function.  
From (\ref{DTeqns}) we conclude that 
$D (a R_{ij}) = \sum_k \del_{e_k}(a R_{ij}) \otimes \omega_k.$
and that $T^{*}(\del_{e_k}(a R_{ij}) \otimes \omega_k) = [E_k, \del_{e_k}(a R_{ij})].$ 
Thus we obtain 
\[T^{*} D (a R_{ij}) = \sum_k [E_k, \del_{e_k}(a R_{ij})] = 
(\del_{e_i} a) E_j - (\del_{e_j} a) E_i + a \sum_k [E_k, \del_{e_k}(R_{ij})].\]
Similarly, using a standard expression for $\delta$ on forms, we have
\[ - \delta (a \omega_i \wedge \omega_j) = 
\sum_k i(e_k) \del_{e_k}(a \omega_i \wedge \omega_j) =  
(\del_{e_i} a) \omega_j - (\del_{e_j} a) \omega_i 
+ a \sum_k i(e_k) (\del_{e_k}(\omega_i \wedge \omega_j)).\]
The first two terms of these two expressions are clearly dual.  At any point we can choose
a frame and coframe using geodesic coordinates and see that all the terms $\del_{e_k}(R_{ij})$ 
and $\del_{e_k}(\omega_i \wedge \omega_j)$ are zero.  Since $\delta$ and $T^*D$ are independent 
of this choice, we see that they are always dual.  

\section{Computing the \W boundary term} \label{sec:bdryterm}

We now assume that we
have a closed and co-closed form $\omega$ satisfying
$T^{*} \omega = 0$.  
The 
boundary term (\ref{Weitzbdry}) in the \W formula 
is then 
the integral of 
$- {*}T\omega \wedge \omega$ over
the boundary, oriented by the outward normal.  If this term is
non-negative, $\omega$ must be zero.  We have found it easier to keep
track of signs by writing 
the boundary integrand as $(-1)^m \omega \wedge {*}T\omega$.
We will give a general formula for this boundary integrand. 
This requires some preliminary notation.  
We will see that when $s$ is a solution to (\ref{correctionprob}) satisfying the boundary 
conditions of the previous section, and we let $\omega = \hat \omega + d_\E s$, 
the boundary integral (\ref{Weitzbdry}) will equal $0$.

We first describe the matrix corresponding to
$\omega \in \Omega^1(W,\E)$
in terms of $8$ sub-matrices
or blocks that are determined by dividing the columns into $2$
groups, the rows into $4$ groups, and taking all 
possible combinations.  
The columns are indexed by the $1$-forms $dr, \omega_1,\omega_2, \ldots, \omega_m$ corresponding to our coframe.  They are grouped according
to normal and tangential directions.  (So the first ``subset" 
consists of the single element $dr$.). 
The first $m+1$ rows correspond to the infinitesimal translations $N,E_1,E_2, \ldots, E_m$.  Again, they are grouped according to normal (single element)
and tangential ($m$ elements) directions.  The next subset of rows 
correspond to the infinitesimal rotations $R_{ab}$ that do not
involve the normal direction.  There are $k$ such elements, 
where $k$ is the dimension of $\so(m)$.  These can be put in any order; they 
will not play much of a role. 
The final group of $m$ rows 
correspond to the infinitesimal rotations $R_{ni}$ that do involve
the normal direction.

The resulting $8$ blocks are labeled by letters $\bA - \bH$ in the following
way, where the labeling has been chosen so that the most important terms
come first.  (The case when $m=3$ is pictured in Table \ref{1forms}.)
$\bA$ is the $m \times m$ block of elements of the form
$E_i \otimes \omega_j$ and $\bB$ is $m\times m$ block whose
entries are the coefficients of $R_{ni} \otimes \omega_j$.  They have already made
an appearance and are our main players.
Then there is an $m \times 1$ block with coordinates
$R_{ni} \otimes dr$ and a $k \times 1$ block with
coordinates $R_{ab} \otimes dr$.  These are labeled $\bC$
and $\bD$ respectively.  These are followed by a block whose 
entries are the coefficients of $E_i \otimes dr$ ($m \times 1$, labeled $\bE$) and 
the coefficient of $N \otimes dr$ ($1 \times 1$ block, labeled $\bF$).  The remaining
two blocks with tangential forms have entries that are the coefficients of $N \otimes \omega_j$
($1 \times m$) and of $R_{ab} \otimes \omega_j$ ($k \times m$), respectively
and are labeled $\bG$ and $\bH$.

\begin{table}
\[\kbordermatrix{		& dr	    &	    	& \omega_1	& \omega_2  & \omega_3	\\
                N	& \bF	    &\vrule	&		& \bG	    &		\\ \cline{2-6}
                E_1	&	    &\vrule	&		&	    &		\\
                E_2	& \bE	    &	\vrule	&		& \bA	    &		\\
		E_3	&	    &	\vrule	&		&	    &		\\ \cline{2-6}
		R_{12}	&	    &	\vrule	&		&	    &		\\
		R_{13}	& \bD	    &	\vrule	&		& \bH	    &		\\
		R_{23}	&	    &	\vrule	&		&	    &		\\ \cline{2-6}
		R_{n1}	&	    &	\vrule	&		&	    &		\\
		R_{n2}	& \bC	    &	\vrule	&		& \bB	    &		\\
		R_{n3}	&	    &	\vrule	&		&	    &		
}\]
\caption{The blocks of $\omega$}\label{1forms}
\end{table}

\bigskip

We want to compute $\omega \wedge {*}T \omega$.  Since we will be integrating
this form over the boundary, we are only interested in the tangential
terms of the forms. So the formula will only involve the tangential 
part $\omega_M$ of $\omega$ and the normal part of $T \omega$.  
(The ${*}$ operator interchanges the forms with a $dr$ term and those without.) 
The tangential 
$1$-forms have a basis given by $\{\omega_i\}$, and the normal
$2$-forms have a natural basis given by $\{dr \wedge \omega_j\}$.  
The decomposition of $\E$ into $4$ groups decomposes the matrix of normal $2$-forms into $4$ blocks.  These blocks have the same shapes as $\bG$, $\bA$, $\bH$ and $\bB$, respectively. 

Note that 
$\omega_i \wedge {*}(dr \wedge \omega_j)$ equals $0$ when 
$i \neq j$ and is a positive orientation of $M$ when $i = j$.
It follows that the integral of $\omega \wedge {*}T \omega$ over $M$ 
equals the $L^2$ dot product on $M$ of the $1$-forms $\omega_M$ and 
$i(n) T\omega$, where $i(n)$ denotes the interior product with the outward
normal.  The form $i(n) T\omega$ is simply the normal part of $T \omega$ without the $dr$ term, so we can continue to speak of the $4$ blocks of $i(n) T \omega$. 

The $L^2$ product of $\omega_M$ and 
$i(n) T\omega$ on $M$ equals the integral of the function 
obtained by taking
the dot products of each of $\bG$, $\bA$, $\bH$, and $\bB$ with its
corresponding block in $i(n) T \omega$ and then adding them together.

This leaves the task of computing the blocks of $i(n) T\omega$.  
For $m=3$ the answer is shown in Table \ref{2forms}.  The next few paragraphs 
explain this computation. 
There will be terms of 
two different types: those coming from the tangential part $\omega_M$ of $\omega$ 
and those coming from the normal part.  Looking at formula 
(\ref{DTeqns}) for $T$,  
we see that this can be expressed as
\begin{equation}\label{eqn:inTw}
i(n) T\omega = [N, \omega_M] - T_M (i(n) \omega).
\end{equation}
Here $T_M$ is the operator $T$ restricted to $M$; the sum in formula 
(\ref{DTeqns}) is taken only over the vectors $\{ e_1 , \ldots, e_m \}$ tangent to $M$.

\begin{table}
\[
\kbordermatrix{		& dr & & \omega_1 & \omega_2 & \omega_3\\
                N	& 0 &\vrule & & \bC^T & \\ \cline{2-6}
                E_1	&  &\vrule & & & \\
                E_2	& 0 &\vrule & & \bB+ \text{skew}(\bD)	& \\
		E_3	&  &\vrule & & & \\ \cline{2-6}
		R_{12}	&  &\vrule & & & \\
		R_{13}	& 0 &\vrule & & \text{rot}(\bE) &\\
		R_{23}	&  &\vrule & & & \\ \cline{2-6}
		R_{n1}	&  &\vrule & & & \\
		R_{n2}	& 0 &\vrule & & \bA + \bF (\text{Id}) & \\
		R_{n3}	&  &\vrule & & & 
}\]
\caption{The $1$-form $i(n) T \omega$}\label{2forms}
\end{table}

Taking the bracket of $N$ with $\omega_M$ 
simply interchanges the translational Lie algebra elements $\{E_i\}$
with the normal rotational ones $\{R_{ni}\}$ in the obvious way and annihilates the
others.  This has the effect of interchanging the blocks labelled $\bA$ and $\bB$.  In
other words, the first summand of (\ref{eqn:inTw}) consists of a copy of
$\bB$ in the second block, a copy of $\bA$ in the fourth block, and zeros elsewhere.  
This is visible in Table \ref{2forms}. 

To understand the contribution from the 
second summand of (\ref{eqn:inTw}), we simply drop
the normal $1$-form $dr$ from the first column $c$ of $\omega$ and think of $c$ as a section of $\E$.  We then compute $- T_M c$.
We can describe the contribution to the first block ($1 \times m$) as $\bC^T$, 
to the second block ($m \times m$) as
$\skewm(\bD)$, the third block ($k \times m$) as
$\rotm(\bE)$, and the fourth block ($m \times m$) as
$\bF(Id)$. 
In these descriptions $\bC^T$ is just the transpose of the
block $\bC$ in $\omega$, and $\bF (Id)$ is a scalar matrix. 
The block $\bD$ belongs to $\so(m) \otimes dr$ so that, dropping the $dr$ term, 
it represents an element of $\so(m).$  Then $\skewm(\bD)$ denotes this skew symmetric 
transformation, where we are identifying $TM$-valued $1$-forms with sections of
$\Hom(TM,TM)$ in the usual way.  
Dropping the $dr$ part of the block $\bE$, it becomes
simply a  vector in $TM$.  Its image under $T$ is a $1$-form with values in the 
bundle of tangential infinitesimal rotations, 
defined by sending $E_i$ to $\sum_j R_{ij} \otimes \omega_j$
and extending linearly.  We define $\rotm(\bE)$ to be the image of the vector $\bE$ under
this map.  In low dimensions this operator is related to the cross-product 
\cite{HK3}, which motivates our notation.  However, we don't have a 
good geometric description in general and use it simply as a definition here. 

Adding these two summands of (\ref{eqn:inTw}) we obtain the result shown in Table \ref{2forms} (for $m=3$). 
With this computation and notation we can now write down the integrand
$\omega \wedge {*}T \omega$ on $M$.
We record the answer in the following 
proposition.

\begin{prop} \label{integrand}
Let $\omega$ be an $\E$-valued $1$-form on an oriented hyperbolic manifold $W$ 
of dimension $n = m+1$ with totally geodesic boundary $M$.  Then, 
with respect to the notation above, the real-valued $1$-form
$\omega \wedge {*}T\omega$ restricted to the boundary equals the volume
form on $M$ times the function
\begin{equation} \label{bdterm}
\bG \cdot \bC^T + \bA \cdot \bB + \bA \cdot \skewm(\bD) + \bH \cdot \rotm(\bE) +\bB \cdot \bA + \bB \cdot \bF(Id)
\end{equation}
\end{prop}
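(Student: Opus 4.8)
The plan is to reduce the boundary integrand to a pointwise fibre pairing on $M$ and then read it off block by block, using the computation of $i(n)T\omega$ already assembled in the paragraphs above. The identity is local on $M$ — both sides are a function times the volume form in the adapted frame — so I work at a single point of $M$ with the frame $\{n,e_1,\dots,e_m\}$ and coframe $\{dr,\omega_1,\dots,\omega_m\}$. Since $T\omega$ is an $\E$-valued $2$-form, write $T\omega = dr\wedge\alpha+\beta$ with $\alpha$ a tangential $1$-form and $\beta$ a tangential $2$-form, and write $\omega = dr\otimes c + \omega_M$ with $c = i(n)\omega$. On the collar the Hodge star gives $\omega_i\wedge{*}(dr\wedge\omega_j)=\delta_{ij}\,\mathrm{vol}_M$ with the outward orientation, while $\omega_k\wedge{*}(\omega_i\wedge\omega_j)$ always carries a $dr$ and so pulls back to $0$ on $M$. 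Hence on $M$ only $\omega_M$ and $\alpha=i(n)T\omega$ survive, and $\omega\wedge{*}T\omega\big|_M = \langle\omega_M,\,i(n)T\omega\rangle\,\mathrm{vol}_M$, where $\langle\,,\,\rangle$ contracts equal $\omega_i$-coefficients against the fibre metric of $\E$ for which $\{E_i,R_{ij}\}$ is orthonormal. This is exactly the reduction announced just before the proposition, so it remains to identify $\langle\omega_M,i(n)T\omega\rangle$.

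Next I use formula (\ref{eqn:inTw}), $i(n)T\omega = [N,\omega_M] - T_M\bigl(i(n)\omega\bigr)$, which comes from $T=\sum_\mu\omega_\mu\wedge\ad(E_\mu)$ by splitting off the normal index, using $dr\wedge dr=0$ and that $i(n)$ kills the purely tangential summands. Evaluating the two pieces on the standard basis of $\mathfrak g$: for $[N,\omega_M]$ the brackets $[N,N]=0$, $[N,E_i]=R_{ni}$, $[N,R_{ni}]=E_i$, $[N,R_{ab}]=0$ (for $a,b\le m$) carry the $\bA$-block of $\omega_M$ to the $R_{ni}$-rows and the $\bB$-block to the $E_i$-rows, leaving the $N$- and $R_{ab}$-rows empty. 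For $-T_M(i(n)\omega)=-\sum_{k\le m}\omega_k\otimes[E_k,c]$, expand the first column $c = \bF\,N + \sum_i\bE_i E_i + \sum_{a<b}\bD_{ab}R_{ab} + \sum_i\bC_i R_{ni}$; the brackets $[E_k,R_{ni}]=-\delta_{ki}N$, $[E_k,R_{ab}]=\delta_{ka}E_b-\delta_{kb}E_a$, $[E_k,E_i]=R_{ki}$, $[E_k,N]=-R_{nk}$ route these four pieces into $\bC^T$ in the $N$-rows, $\skewm(\bD)$ in the $E_i$-rows, $\rotm(\bE)$ in the $R_{ab}$-rows, and $\bF(\mathrm{Id})$ in the $R_{ni}$-rows, where $\skewm$ and $\rotm$ are the maps defined above. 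Adding gives the four blocks of $i(n)T\omega$ of Table \ref{2forms}: $\bC^T$, $\bB+\skewm(\bD)$, $\rotm(\bE)$, $\bA+\bF(\mathrm{Id})$.

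Finally, since the row-groups $\{N\}$, $\{E_i\}$, $\{R_{ab}\}$, $\{R_{ni}\}$ are mutually orthogonal in the fibre metric, $\langle\omega_M,i(n)T\omega\rangle$ is the sum of the four group-wise pairings of the blocks $\bG,\bA,\bH,\bB$ of $\omega_M$ against the matching blocks of $i(n)T\omega$, namely $\bG\cdot\bC^T + \bA\cdot(\bB+\skewm(\bD)) + \bH\cdot\rotm(\bE) + \bB\cdot(\bA+\bF(\mathrm{Id}))$. Combining with the reduction of the first paragraph, $\omega\wedge{*}T\omega\big|_M$ equals $\mathrm{vol}_M$ times this expression, which expands to (\ref{bdterm}); that is the claim.

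The expected friction is pure bookkeeping: pinning down compatible sign and normalization conventions for the identifications $\mathcal B\cong TM$, $R_{ij}\leftrightarrow -\omega_i\wedge\omega_j$, and $TM$-valued $1$-forms $\leftrightarrow\Hom(TM,TM)$, so that $\skewm$, $\rotm$, and each block dot-product are normalized consistently and the six terms of (\ref{bdterm}) all appear with a $+$ sign; and verifying that cross-terms between distinct row-groups vanish, which is just the orthogonality of the $N$-, translational-, tangential-rotational-, and normal-rotational-subspaces of $\mathfrak g$. Nothing here is deep, but it must be done carefully — which is precisely why the computation is isolated as its own proposition.
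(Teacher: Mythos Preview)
Your proposal is correct and follows essentially the same route as the paper: reduce $\omega\wedge{*}T\omega|_M$ to the pointwise fibre pairing $\langle\omega_M,\,i(n)T\omega\rangle\,\mathrm{vol}_M$ via $\omega_i\wedge{*}(dr\wedge\omega_j)=\delta_{ij}\mathrm{vol}_M$, use the splitting $i(n)T\omega=[N,\omega_M]-T_M(i(n)\omega)$ of (\ref{eqn:inTw}) to read off the four blocks displayed in Table~\ref{2forms}, and then sum the block-wise dot products. Your explicit bracket computations and your remark that the only friction is sign/normalization bookkeeping are spot on; the paper handles these conventions exactly as you anticipate.
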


In the situation of interest to us $\omega = \omega_0 + d_\E s$ where $\omega_0$
is a model harmonic form near 
the boundary and 
$d_\E s$ is a correction term.  We will use the letters $\hat \bA - \hat \bH$ to denote 
the blocks of $\omega_0$ and $\bA - \bH$ to denote those of 
$d_\E s$.
The form $\omega_0$ is particularly simple with all the blocks
equal to $0$ except for $\hat \bB$ which is symmetric and traceless.
In the correction term, $s$ is a global
section of $\E$. In particular, it is a global
section on $M$. Furthermore, the section $s$
is 
a canonical lift so that $\bA$ will be
the symmetric
part of $D v$ where $v$ is a global vector field on $M$.

Below we will prove the following lemma. 

\begin{lem} \label{0blocks}
Let $d_\E  s$ be an $\E$-valued $1$-form, where $s$ is a global section
of $\E$ which is a canonical lift satisfying 
boundary conditions (\ref{vfbdcond1}) and (\ref{vfbdcond2}).  Denote the blocks of $d_\E$ by the letters
$\bA - \bH$ as in Table \ref{1forms} above.  Then $\bB$, $\bD$, $\bE$, and $\bG$ are all zero. 
\end{lem}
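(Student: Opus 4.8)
The plan is to unwind the definition of $d_\E s$ using the local formula from (\ref{DTeqns}), namely $d_\E s = Ds + Ts$, and track where each block of the output lands, using the frame $\{n,e_1,\dots,e_m\}$ adapted to $M$ together with the geometric data of the equidistant foliation near $M$ (totally umbilic slices, $\nabla_{e_i} n = \tanh r\, e_i$, $\nabla_n e_i = 0 = \nabla_n n$, and geodesic coordinates on $M\times\{0\}$ making the $c_{ijk}$ vanish at the point of interest). Write $s = (u,\widetilde u)$ with $u = (h,v)$ its normal and tangential parts, and $\widetilde u$ the skew part of $Du$ since $s$ is a canonical lift. The boundary conditions are $h = 0$ and $\nabla_n v = 0$ on $M$; as noted in the previous section these are equivalent to the dual conditions $h=0$, $\partial_n\sigma = 0$, hence also $\widetilde\sigma = 0$ on $M$, where $\widetilde\tau = dr\wedge\widetilde\sigma + \widetilde\phi$ is the decomposition of the $2$-form associated to $\widetilde u$.

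The four blocks to kill are: $\bB$ (coefficients of $R_{ni}\otimes\omega_j$), $\bD$ (coefficients of $R_{ab}\otimes dr$), $\bE$ (coefficients of $E_i\otimes dr$), and $\bG$ (coefficients of $N\otimes\omega_j$). First I would handle $\bE$ and $\bG$, the ``$dr$-versus-tangential translational'' blocks. The $N\otimes\omega_j$ coefficient of $d_\E s$: the $Ts$ contribution to the $N$-slot comes from $[E_j, \widetilde u]$-type terms and from the rotational part of $s$, while the $Ds$ contribution to $N\otimes\omega_j$ is governed by $\nabla_{e_j}$ of the normal component of $u$, i.e.\ by $d_M h$ plus umbilicity terms proportional to $h$; since $h=0$ on $M$ and $d_M h = 0$ there, this block vanishes. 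Symmetrically, the $E_i\otimes dr$ block of $d_\E s$ is controlled by $\nabla_n$ applied to the tangential vector field $v$ (the $\omega$-part of $Ts$ in the $dr$ direction contributes nothing since there is no relevant bracket producing a translation there), and $\nabla_n v = 0$ by (\ref{vfbdcond2}); hence $\bE = 0$.

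Next, $\bD$: the $R_{ab}\otimes dr$ block (tangential rotations paired with $dr$). Since $s$ is a canonical lift, its rotational part is the skew part of $Du$, and the relevant component is $\widetilde\phi$, the tangential $2$-form part of $\widetilde\tau$; the $R_{ab}\otimes dr$ coefficients of $d_\E s = Ds + Ts$ in the $dr$-column reduce, in geodesic coordinates at the point, to $\nabla_n$ of the tangential rotational part of $s$ together with $T$-contributions that vanish in the normal direction. Because $\widetilde\phi$ is (essentially) $d_M\sigma$ and $\sigma$ satisfies $\partial_n\sigma = 0$, differentiating in the normal direction kills this block — or more directly, $\bD$ is expressed through $\nabla_n v$ and its tangential derivatives, both zero on $M$. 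Finally, $\bB$ (the $R_{ni}\otimes\omega_j$ block): this is the one the boundary conditions were really designed for. Here the canonical-lift identity $2\widetilde\tau = d\tau$ gives, for the $dr\wedge\widetilde\sigma$ part, $2\widetilde\sigma = \partial_n\sigma - d_M h$; boundary conditions (\ref{formbdcond1}) and (\ref{formbdcond2}) force $\widetilde\sigma = 0$ on $M$ (this is exactly (\ref{formbdcond3})), and the $R_{ni}\otimes\omega_j$ block of $d_\E s$ is precisely the matrix of $\widetilde\sigma$ (tangential $1$-form read as a vector, i.e.\ the $R_{ni}$-coefficients), so $\bB = 0$.

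The main obstacle I expect is bookkeeping rather than conceptual: correctly identifying which summand of $d_\E s = Ds + Ts$ feeds into each of the eight blocks when the frame is the $M$-adapted one (so $n$ is genuinely distinguished and the umbilicity terms $\pm\tanh r\,\delta_{ij}$ appear), and verifying that the ``off-block'' contributions — e.g.\ algebraic $T$-terms mixing $N$ with $E_i$, $R_{ni}$ — really do cancel or land elsewhere, so that $\bB,\bD,\bE,\bG$ are left with only the pieces that the four scalar/first-order boundary conditions annihilate. Choosing geodesic normal coordinates on $M\times\{0\}$ at the evaluation point (killing the $c_{ijk}$, as in Proposition \ref{pullback}) is the device that makes this manageable; the result then holds at every boundary point since the blocks, being coefficients of an intrinsically defined form, are frame-independent.
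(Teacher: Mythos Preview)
Your strategy — split $d_\E s = Ds + Ts$, track each of the eight blocks in the adapted frame, and kill $\bB,\bD,\bE,\bG$ using the boundary conditions — is exactly what the paper does. But two of your block computations contain actual errors, not just elisions.

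For $\bE$ you assert that ``there is no relevant bracket producing a translation'' in the $dr$-column of $Ts$. That is false: $T(R_{ni}) = E_i\otimes dr - N\otimes\omega_i$, so the $R_{ni}$-coefficients of $s$ feed directly into both $\bE$ and $\bG$. Your treatment of $\bG$ has the same gap — you mention $[E_j,\widetilde u]$-type contributions but never dispose of them. The correct logic is to \emph{first} establish that the normal rotational part of $s$ vanishes on $M$ (this is exactly $\widetilde\sigma = 0$, which you do derive, but only later when handling $\bB$), and then observe that this kills the $T$-contributions to $\bE$ and $\bG$. The paper orders it this way: from $h=0$ and $\nabla_n v = 0$ one deduces, via the canonical-lift identity, that the $R_{ni}$-coefficients of $s$ are zero on $M$; this is then used for $\bE$, $\bG$, and $\bB$.

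For $\bB$ you write that the block ``is precisely the matrix of $\widetilde\sigma$''. This conflates a section with its derivative. The $R_{ni}\otimes\omega_j$ block of $d_\E s$ consists of the \emph{tangential covariant derivatives} of the $R_{ni}$-part of $s$ (the $Ds$ piece) together with $-h\,\delta_{ij}$ coming from $T(hN) = -h\sum_i R_{ni}\otimes\omega_i$ (the $Ts$ piece). You need $\widetilde\sigma\equiv 0$ on all of $M$ so that its tangential derivative vanishes, and $h=0$ separately for the $T$ term.

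Finally, two remarks on $\bD$ and on your setup. For $\bD$ the point is that on the totally geodesic $M$ one has $\nabla_{e_i} n = 0$ (not merely $\tanh r\cdot e_i$, which is the statement off $M$), and the normal and tangential covariant derivatives effectively commute when applied to $v$ and dotted with tangential vectors; this is what lets you rewrite $\nabla_n$ of the tangential skew part of $Du$ as tangential derivatives of $\nabla_n v = 0$. Your umbilicity bookkeeping and the geodesic-coordinate trick are harmless but unnecessary here: at $r=0$ all the $\tanh r$ terms are zero, and the $c_{ijk}$ do not enter the four blocks in question.
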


Assuming Lemma \ref{0blocks}, we can show that, for a harmonic form $\omega$ obtained 
from our boundary value problem, the boundary term in the \W formula (\ref{Weitzformula})
is always trivial.  This implies infinitesimal, hence local, rigidity. 

\begin{thm} \label{boundaryzero}
Let $W$ be a compact hyperbolic $n$-manifold with totally geodesic boundary $M$, where
$n \geq 4$.  Let $\omega \in H^1(W;\E)$ be a harmonic representative constructed using
the boundary value problem in Section \ref{sec:bdryvalue}.  Then the boundary term 
\begin{equation*}
- \int_M \left( {*}T\omega \wedge \omega + T^{*} \omega \wedge {*} \omega \right) .
\end{equation*}
in the \W formula (\ref{Weitzformula}) equals $0.$   Hence $\omega = 0$.
\end{thm}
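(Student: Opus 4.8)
The plan is to feed the structural information we have already accumulated about $\omega$ into the block formula (\ref{bdterm}) of Proposition \ref{integrand}, watch all but one term collapse, and then eliminate the surviving term by an integration by parts on the \emph{closed} boundary $M$.

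Since $T^{*}\omega = 0$ was established in Section \ref{sec:bdryvalue}, the \W boundary term reduces to $-\int_M {*}T\omega\wedge\omega$, which by Proposition \ref{integrand} is, up to the fixed sign $(-1)^m$, the integral over $M$ of the function (\ref{bdterm}) against the volume form. So I would next record the blocks of $\omega = \omega_0 + d_\E s$ near $M$. The model form $\omega_0$ has all blocks zero except $\hat\bB$, which is symmetric and traceless. By Lemma \ref{0blocks} the blocks $\bB$, $\bD$, $\bE$, $\bG$ of $d_\E s$ are zero, and since $s$ is a canonical lift its translational block is symmetric. Adding, the blocks of $\omega$ satisfy $\bD = \bE = \bG = 0$, $\bB = \hat\bB$ (symmetric, traceless) and $\bA$ symmetric. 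Substituting into (\ref{bdterm}): the terms $\bG\cdot\bC^{T}$, $\bA\cdot\skewm(\bD)$ and $\bH\cdot\rotm(\bE)$ vanish since $\bG = \bD = \bE = 0$, and $\bB\cdot\bF(\mathrm{Id}) = \bF\cdot\tr(\hat\bB) = 0$ since $\hat\bB$ is traceless. The integrand is therefore just $2\,\bA\cdot\bB = 2\,\bA\cdot\hat\bB$.

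It remains to show $\int_M \bA\cdot\hat\bB = 0$. The key identifications are: (i) the restriction of $\bA$ to $M$ is the symmetric part of $D_M v$, where $v$ is the tangential part of the translational part of the global section $s$ — a globally defined vector field on $M$ — and $D_M$ is the operator $D$ of (\ref{DTeqns}) computed using only directions tangent to $M$; this uses the boundary condition $h = 0$ and, crucially, the fact that on a totally geodesic boundary the ambient Levi-Civita connection restricts to the intrinsic one, so that $(\bA)_{ij}|_M = \langle \nabla^M_{e_j} v, e_i\rangle$ up to symmetrization; and (ii) $\hat\bB$, viewed as a $TM$-valued $1$-form via the identification of $\mathcal{B}$ with $TM$, is exactly the harmonic representative $B$ of Theorem \ref{harmonicbends}, so $D_M^{*}\hat\bB = 0$ and $\hat\bB$ is symmetric. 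Since $\hat\bB$ is symmetric it pairs trivially against the skew part of $D_M v$ (so the symmetrization in (i) is invisible), and since $M$ is closed $D_M$ and $D_M^{*}$ are genuine adjoints, so
\[
\int_M \bA\cdot\hat\bB \;=\; \langle D_M v,\ \hat\bB\rangle_{L^2(M)} \;=\; \langle v,\ D_M^{*}\hat\bB\rangle_{L^2(M)} \;=\; 0 .
\]
Thus the boundary term of the \W formula vanishes. Finally, setting $B = 0$ and $d_\E\omega = 0 = \delta_\E\omega$ in (\ref{Weitzformula}) gives $(D\omega, D\omega) + (D^{*}\omega, D^{*}\omega) + (H\omega,\omega) = 0$; each summand is non-negative and, since $n = m+1 \geq 4 > 3$, Weil's estimate $(H\omega,\omega)\geq c\,(\omega,\omega)$ with $c > 0$ forces $\omega = 0$.

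The heart of the matter — and the step I expect to require the most care — is identification (i): checking that the tangential block of $d_\E s$ along $M$ really is $D_M$ applied to the \emph{globally defined} vector field $v$, with no leftover contributions from the normal direction or from the second fundamental form. Everything else is bookkeeping (which blocks of $\omega_0$, $d_\E s$, and their sum are zero, symmetric, or traceless, packaged in Lemma \ref{0blocks}) followed by the one-line adjointness computation on the closed manifold $M$, which is precisely where the hypothesis $D_M^{*}\hat\bB = 0$ from Theorem \ref{harmonicbends} does its work.
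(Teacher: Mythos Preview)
Your proof is correct and follows essentially the same approach as the paper's: reduce to the integrand $(\ref{bdterm})$, use the block information from Lemma \ref{0blocks} and the structure of $\omega_0$ to collapse everything to $2\,\bA\cdot\hat\bB$, then kill this by integrating $D_M$ by parts against the globally defined vector field $v$ and invoking $D_M^{*}\hat\bB=0$. The only cosmetic difference is that you substitute the combined blocks of $\omega$ directly into $(\ref{bdterm})$, whereas the paper first expands $\omega\wedge{*}T\omega$ bilinearly in $\omega_0$ and $d_\E s$; both routes arrive at the same surviving term, and your citation of Theorem \ref{harmonicbends} for $D_M^{*}\hat\bB=0$ is in fact slightly more to the point than the paper's appeal to Proposition \ref{pullback}.
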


\begin{proof}
Since $T^{*} \omega = 0$, it suffices to show that the integral of 
$\omega \wedge {*}T \omega$ over $M$ is trivial.  We use Proposition \ref{integrand}
and the notation above to compute the integrand of this integral.

Using the decomposition $\omega = \omega_0 + d_\E s$, we
expand to obtain
\[\omega \wedge {*}T \omega = \omega_0 \wedge {*}T \omega_0 + d_\E s \wedge {*}T d_\E s + \omega_0 \wedge {*}T d_\E s + d_\E s \wedge {*}T \omega_0 \]
Each of the four terms is a sum of dot products 
with the same terms as those in equation (\ref{bdterm}) except that 
some of the letters have hats. 
In the first term all letters have hats, in the second none do, in the third the first letter in each dot product
has a hat, 
and in the fourth the second letter in each dot product
has a hat. 

From Lemma \ref{0blocks} we know that blocks $\bB$, $\bD$, $\bE$ and $\bG$ are all zero.  In $\omega_0$ every block except $\hat \bB$ is zero. 
One quickly sees that the first two terms are 
trivial because at least one matrix in each dot product equals $0$.  The third term
equals $\hat \bB \cdot (\bA + \bF (Id)) = \hat \bB \cdot \bA + \bF (\tr \hat \bB) = \hat \bB \cdot \bA$ since $\hat \bB$ is traceless.  The
final term equals $\bA \cdot \hat \bB$.  Thus the integrand
equals $ 2 \bA \cdot \hat \bB$.  Integrating this over $M$
is just $2$ times the $L^2$ dot product $\langle \bA, \hat \bB \rangle$ 
of the forms
$\bA$ and $\hat \bB$ (where $\hat \bB$ is interpreted as a $TM$-valued
$1$-form on $M$). 
Since $\hat \bB$ is symmetric, its dot product with $Dv$, for $v$ any 
global vector field on $M$, equals its dot product with the 
symmetric part of $Dv$ (this is true pointwise, just a fact
about dot products of matrices).  Since the section $s$ is a canonical lift, 
$\bA$ is the symmetric part of $Dv$ for a global vector field $v$ on $M$. 
So, taking the $L^2$ dot product,
$\langle\cdot,\cdot\rangle_M$, on $M$, we obtain
\[\langle \bA, \hat \bB \rangle_M = \langle Dv, \hat \bB \rangle_M = \langle v, D^{*} \hat \bB \rangle_M. \]
By Proposition \ref{pullback}, $0 = D^{*} \omega_0 = D^{*} \hat \bB$. 
We conclude that integrating $\bA \cdot \hat \bB$ over $M$ produces $0$.
\end{proof}

We conclude this section with the proof of Lemma \ref{0blocks}.

\begin{proof}


Recall that $\del_n e_i = 0 = \del_n n$ in a neighborhood of $M.$ Since 
$M$ is totally geodesic, $\del_{e_i} n = 0$ and $\del_{e_i} e_j$ has no normal component. The tangential components of
$\del_{e_i} e_j$ depend on the choice of moving frame, but are not relevant to our computations.

Near the boundary the bundle $\so(TW)$ decomposes into
infinitesimal rotations $\{R_{ab}\}_{n \notin \{a,b\}}$ fixing $n$, called tangential rotations, and those of the form $R_{ni}$, which we will call normal rotations.  The above discussion 
about covariant derivatives implies 
that, on 
$M$, $\del_{e_i} R_{ab}$ involves only tangential rotations and $\del_{e_i} R_{nj}$ involves
only normal rotations.  The normal derivatives of all rotations are zero.

These observations show that the exterior differentiation operator
$D: \Omega^0(M;\E) \to \Omega^1(M;\E)$ preserves a decomposition of the
domain and range into blocks in the following way.  Near the boundary, 
an element 
of the bundle $\E$ is decomposed into $4$ blocks 
corresponding to the normal translational part, tangential translational part,
tangential rotational part, and normal rotational 
part, respectively.  Similarly,
a matrix representing an $\E$-valued $1$-form is decomposed into the $8$ blocks 
of Table \ref{1forms}. 
$D$ preserves these decompositions in the sense that the only contribution
to a normal or tangential block in $\Omega^1(M;\E)$ comes from normal or tangential
derivatives respectively of the corresponding block in $\E$.  For example,
the contribution from $Ds$ to the block $ \bB$ whose coordinates are of the
form $R_{ni} \otimes \omega_j$ come from tangential derivatives of the normal rotational
block of $s$. 

With these preliminary observations, it is easy 
to see the 
effects of our 
boundary conditions on 
the blocks of $Ds$.  
As before, let $s = (u, \widetilde u)$ for $u$ a vector field on $W$ and $\widetilde u$ a section of $\so (TW)$. 
Applying boundary condition (\ref{vfbdcond1}) we see that, on $M$, 
$(\del_{e_i} u)\cdot n = \del_{e_i} (u \cdot n) = \del_{e_i} h = 0$ because $h \equiv 0$
and where we have used that $\del_{e_i} n = 0$ on $M$.  This is the contribution from $Ds$ to
block $ \bG$, whose coordinates are $N \otimes \omega_i$.
Similarly, because the $e_i$ are parallel in the normal direction, the boundary
condition (\ref{vfbdcond2}) implies that
$(\del_n u)\cdot e_i = \del_n (u \cdot e_i) = 0$. This is the $ \bE$ block of $Ds$.


Since $s = (u,\widetilde u)$ is a canonical lift, its rotational part, $\widetilde u$, is 
the skew symmetric part of $Du$ and corresponds to the $2$-form $ \frac {1}{2}  d \tau$,
where $\tau$ is dual to $u$.  The normal rotational part of $\widetilde u$, 
which is a linear combination of the $R_{ni}$, 
is just the part of the $2$-form $ \frac {1}{2}  d \tau$ 
involving $dr$.  It equals
$\frac {1}{2} \sum_i ((\del_n u)\cdot e_i - (\del_{e_i} u)\cdot n)\, dr \wedge \omega_i.$
We have just shown that
this is zero, so we conclude that the normal rotational part of
$s$ is trivial on $M$.  Since this is true on all of $M$, its tangential derivative is
zero.  We saw above that this is precisely the image in $ \bB$ of $Ds$.

Finally, the image in $ \bD$ of $Ds$ comes from the normal derivative
of the tangential rotational part of $s$.  The tangential rotational part of $s$
is described by the coefficients of $R_{ab}$.  Since 
$s$ is a canonical lift
the coefficient of $R_{ab}$ equals $- \frac{1}{2} ((\del_{e_a} u)\cdot e_b - (\del_{e_b} u)\cdot e_a).$
We don't know anything about these values except that, since $\del_{e_a} n = 0 = 
\del_{e_b} n$, they equal
$- \frac{1}{2} ((\del_{e_a} v)\cdot e_b - (\del_{e_b} v)\cdot e_a),$ where $v$ is the tangential
part of the vector field $u$.  However, since it is totally geodesic, $\del_{e_i}$
and $\del_n$ commute on $M$ and the normal derivative of these tangential derivatives
of $v$ can be expressed as tangential derivatives of the normal derivative of $v$.
But, by boundary condition (\ref{vfbdcond2}), these are all zero.
Thus, we have shown that, under our boundary conditions, $Ds$ has trivial image in the blocks
$ \bB$, $ \bD$, $ \bE$, and $ \bG$.  

Since $d_\E  = D + T$ we also need to compute
the value of $T(s)$ in these blocks.  We can quickly compute that 
$T(N) = - \sum_i R_{ni} \otimes \omega_i$, so its image is in
$ \bB$, and 
$T(E_i) = R_{ni} \otimes dr + \sum_j R_{ji} \otimes \omega_j$ 
with image in $ \bH$ and $ \bC$.  Similarly, 
$T(R_{ab}) = E_b \otimes \omega_a - E_a \otimes \omega_b$ is contained
in $ \bA$ and $T(R_{ni}) = E_i \otimes dr - N \otimes \omega_i$
contributes to $ \bE$ and $ \bG$.  However, for the section $s$ the 
coefficient of $N$ and all the coefficients of the $R_{ni}$ are zero 
(by boundary 
conditions (\ref{formbdcond1}) and (\ref{formbdcond3}), 
respectively) so
the value of $T s$ in each of $ \bB$, $ \bE$, and $ \bG$ is trivial. 
The block $ \bD$ is not in the image of $T$.
Thus, we have shown that, under our boundary conditions, 
the $4$ blocks $ \bB$, $ \bD$, $ \bE$, and $ \bG$ of $ds$ are all trivial.  
\end{proof}

This completes the proof of Theorem \ref{thm:main}.

\section{Extensions and Conjectures}\label{sec:conjectures}

In this section we will discuss some fairly immediate extensions of the results in this paper and  
some conjectures suggested by our methods.    

A first obvious question to ask is whether these results generalize to the case of manifolds 
with parabolic elements.  
The situation turns out to be more subtle than one might expect at first glance.  As we will
discuss below, the answer depends on whether or not the cusps are on the boundary.
Furthermore, we conjecture that the answer is different in dimension $4$ than 
in higher dimensions.

Let $W$ be a finite volume hyperbolic $n$-manifold with totally geodesic boundary.  In Theorem \ref{thm:intromain} $W$ is assumed to be compact, but in general it might have cusps and hence be noncompact.  
If the boundary of $W$ is nonetheless compact, 
then the methods used in this paper 
extend easily to prove local rigidity.  The main thing to check is that, when the dimension 
$n$ of $W$ is at least $4$, then the cusped ends remain complete under any small deformation.  
This follows from the fact that all of the cusps will have rank $n-1$ and a simple analysis of 
the algebraic deformations of rank $n-1$ parabolic subgroups 
in $\h^n.$   Once it is established that the cusps remain complete, standard $L^2$ techniques 
allow one to deal with the \W boundary terms of a harmonic representative in 
$H^1(X;\E)$ on these ends.  The rest of our analysis goes through without change.  
This proves the following theorem. 
 
\begin{thm}\label{thm:rank3}
Let $W$ be a finite volume hyperbolic $n$-manifold with compact, totally geodesic boundary. Assume $n>3$. 
Then the holonomy representation of $W$ is infinitesimally rigid.
\end{thm}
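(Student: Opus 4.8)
The plan is to adapt the three‑step argument of Sections \ref{sec:bdryforms}--\ref{sec:bdryterm} essentially verbatim, treating the totally geodesic boundary $M$ exactly as in the compact case and supplying the one extra ingredient needed to neutralize the cusps of $W$. Since $\partial W = M$ is compact, no cusp of $W$ can touch the boundary, so every cusp is an interior end whose cross‑section is a closed flat $(n-1)$‑manifold; the corresponding peripheral subgroup $\Lambda \subset \pi_1 W$ is therefore virtually $\mathbb{Z}^{n-1}$, i.e. a maximal‑rank parabolic subgroup of $\Isom(\h^n)$. As before, I would fix a class $[\hat\omega]\in H^1(W;\E)\cong H^1(X;\E)$, observe that its restriction to $M$ lies in $H^1(M;\mathcal{S})$ (the $\mathcal{H}$‑part is trivial because $\dim M = n-1 \geq 3$), produce the model harmonic form $\omega_0$ of Proposition \ref{pullback} near $M$, and arrange that $\hat\omega$ equals $\omega_0$ in a collar of $M$.

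The genuinely new point is the behavior near the cusps, and this is where the hypothesis $n\geq 4$ enters. First I would record that each cusp has rank $n-1\geq 3$ and carry out the elementary analysis of $\Hom(\Lambda,\Isom(\h^n))$ near the inclusion: for a maximal‑rank parabolic subgroup, every nearby representation keeps the peripheral elements parabolic, so the cusps stay complete under small deformations (this is exactly the feature that fails in dimension $3$, where the Dehn‑type deformations intervene). Infinitesimally this says that no cohomology class in $H^1(W;\E)$ detects an incomplete deformation of a cusp, and, together with the finiteness of $\operatorname{vol}(W)$, it lets one run the standard $L^2$ Hodge theory for the flat bundle $\E$ on the complete, finite‑volume cusp ends. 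Concretely, I would show that $[\hat\omega]$ has a representative which agrees with $\omega_0$ near $M$ and which, after solving the correction equation (\ref{correctionprob}) subject to the boundary conditions (\ref{vfbdcond1})--(\ref{vfbdcond2}) of Section \ref{sec:bdryvalue}, is harmonic on $W$ and decays exponentially (in particular is $L^2$) down each cusp together with $D\omega$, $T\omega$, $D^{*}\omega$, $T^{*}\omega$. The boundary value problem of Section \ref{sec:bdryvalue} needs no change: its analysis is local near $M$, and the only global input — existence of the correction section — is now guaranteed by the $L^2$ theory once the cusps are known to be complete.

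Finally I would apply the \W formula of Proposition \ref{prop:byparts}, integrating over $W$ truncated along $M$ and along horospherical cross‑sections pushed deep into each cusp. The boundary contribution along $M$ vanishes by Theorem \ref{boundaryzero}, whose proof only uses the boundary conditions and Lemma \ref{0blocks} and is purely local near $M$; the contribution along the cusp cross‑sections tends to $0$ as these are pushed to infinity, by the exponential decay of $\omega$ and its derived forms. In the limit one gets $(d_\E\omega,d_\E\omega)+(\delta_\E\omega,\delta_\E\omega) = (D\omega,D\omega)+(D^{*}\omega,D^{*}\omega)+(H\omega,\omega)$; since $\omega$ is harmonic and $(H\omega,\omega)\geq c\,(\omega,\omega)$ with $c>0$ (Weil's estimate, valid because $n\geq 4$), we conclude $\omega=0$, hence $H^1(W;\E)=0$ and $W$ is infinitesimally rigid. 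The main obstacle is the cusp analysis: verifying that maximal‑rank parabolic subgroups of $\Isom(\h^n)$ admit no incomplete deformations for $n\geq 4$, and checking that the harmonic representative produced by the boundary value problem really does decay fast enough for all boundary terms at infinity to vanish. Once these are in place, the remainder of the argument is the compact proof of Theorem \ref{thm:main} without modification.
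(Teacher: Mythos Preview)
Your proposal is correct and follows essentially the same approach the paper sketches: observe that the cusps have rank $n-1$, use the elementary algebraic analysis of maximal-rank parabolic subgroups in $\Isom(\h^n)$ to see the cusps stay complete, invoke standard $L^2$ techniques to control the \W\ boundary contributions at the cusped ends, and then run the compact argument of Sections \ref{sec:bdryforms}--\ref{sec:bdryterm} unchanged near the compact geodesic boundary $M$. One small correction: Weil's estimate $(H\omega,\omega)\geq c\,(\omega,\omega)$ already holds for $n\geq 3$; the hypothesis $n\geq 4$ is used not there but, as you correctly noted earlier, to ensure $\dim M\geq 3$ so that $H^1(M;\mathcal{H})=0$.
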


On the other hand, if the boundary of $W$ is allowed to have cusps, 
hence to be noncompact, then in dimension $4$ it is no longer true that $W$ is always locally rigid.  
The first example  of a such a flexible, finite volume hyperbolic $4$-manifold with 
non-compact geodesic boundary is constructed and studied in detail in \cite{KS}.  (Actually, the 
example is an orbifold, but it has manifold covers with similar properties.)

It is worth looking at how the arguments of the current paper break down in that case, as it provides
some insight into the situation in other dimensions.  If we denote by $W$ the finite volume hyperbolic manifold and let $M$ denote its geodesic boundary, then we consider the restriction map $H^1(W;\E) \to H^1(M;\E).$
The image of this map corresponds to those infinitesimal deformations in $\h^n$ of the $n-1$ dimensional 
hyperbolic manifold $M$ that extend over $W$.  We have seen that
there is a decomposition 
\begin{equation} \label{cohdecomp} 
H^1(M;\E) \cong H^1(M;\mathcal{H}) \oplus H^1(M;\mathcal{S}).
\end{equation}
When $M$ is compact and has dimension at least $3$, the first factor is trivial.
This was crucial to our argument.  When $M$ is complete and finite volume but noncompact, 
the factor $H^1(M;\mathcal{H})$ is \emph{nontrivial} when $M$ has dimension $3.$
This corresponds to the fact that such an $M$ has local deformations where the new
structure is no longer complete; in particular, at least one cupsed end will no longer
be complete.  This phenomenon is the basis for hyperbolic Dehn surgery which is an
important tool in the study of hyperbolic $3$-manifolds.

The analysis in this paper strongly suggests that, for any nontrivial element of $H^1(W;\E)$, 
the image in $H^1(M;\mathcal{H})$ under the restriction map must be nontrivial.  
Indeed, this is the case for the example in \cite{KS}.  In that example some of the cusps 
on the (orbifold) boundary of the original structure do not remain complete and 
undergo an 
orbifold version of hyperbolic Dehn surgery.  Furthermore, nearby representations
of the boundary groups no longer preserve a $3$-dimensional totally geodesic
hyperplane; in other words the boundary of convex hull is no longer totally geodesic.
This means that the image in the other factor, $H^1(M;\mathcal{S})$ is also 
nonzero.  
We conjecture
that this must always be true, that a nontrivial deformation of $W$ must always have
nontrivial image in \emph {both} factors in (\ref{cohdecomp}) under the restriction map.

Similar reasoning leads us to suspect that this failure of local rigidity
only occurs in dimension $4$.  For, when the dimension $n$ of $W$ is at least $5$,
the dimension of the boundary $M$ is at least $4$.  In these dimensions, 
the results of Garland-Raghunathan \cite{GR} imply that $H^1(M;\mathcal{H}) = 0$
whenever $M$ is complete and finite volume, even in the noncompact case.

\begin{conj}\label{conj:dimn5}
Let $W$ be a finite volume hyperbolic $n$-manifold with totally geodesic boundary.
If $n>4$ then $W$ is infinitesimally rigid.
\end{conj}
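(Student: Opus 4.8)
The plan is to run the three-step argument behind Theorem~\ref{thm:main} in the finite-volume setting, combining the treatment of the cusps of $W$ sketched for Theorem~\ref{thm:rank3} with the Garland--Raghunathan vanishing theorem \cite{GR} to control the now-noncompact geodesic boundary $M$. Throughout, $n \ge 5$, so $m = \dim M = n-1 \ge 4$. The cusps of $W$ are of two kinds: those disjoint from $M$, which have maximal rank $n-1$ in $W$, and those meeting $M$, which restrict to cusps of $M$ of maximal rank $n-2$ (since $M$ has finite volume). The first step is to set up $L^2$-Hodge theory on this geometrically finite manifold-with-boundary: given $[\hat\omega] \in H^1(W;\E)$, produce a square-integrable closed $\E$-valued $1$-form representing it which agrees with a model harmonic form near $M$ and is asymptotic to a flat $\E$-valued form along each cusp. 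The exponential decay of harmonic $\E$-valued forms into the interior cusps --- hence the legitimacy of the integration-by-parts identities there --- rests on those cusps having maximal rank together with Weil's pointwise lower bound $H = TT^* + T^*T \ge c > 0$ on $\E$-valued $1$-forms (valid since $n \ge 3$), exactly as for Theorem~\ref{thm:rank3}.

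Next one restricts to the boundary. Because $M$ is complete, of finite volume, and $m \ge 4$, Garland--Raghunathan gives $H^1(M;\mathcal{H}) = 0$, so the restricted class lies entirely in the bending factor $H^1(M;\mathcal{S})$, just as in the compact case of Section~\ref{sec:bdryforms}. The algebraic content of Theorem~\ref{harmonicbends} (the kernel of the zeroth-order operator $H$ on $\mathcal{S}$-valued $1$-forms is the symmetric traceless part) is pointwise and unchanged; replacing the Hodge theorem by the $L^2$-Hodge theorem on the finite-volume $M$ with its maximal-rank cusps, one again represents the class by a symmetric traceless harmonic $TM$-valued bending form $B$, which now decays exponentially into the cusps of $M$. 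Proposition~\ref{pullback} is a genuinely local construction and produces, verbatim, a model harmonic form $\omega_0 = \pi^* B$ in a neighborhood of $M$; one arranges $\hat\omega = \omega_0$ there. One then solves the boundary value problem of Section~\ref{sec:bdryvalue}, namely $\delta_\E d_\E s = -\delta_\E\hat\omega$ subject to conditions (\ref{vfbdcond1}) and (\ref{vfbdcond2}) along $M$, now on the noncompact $W$ and with an added $L^2$-decay requirement at the cusps. Ellipticity and the self-adjointness and positivity of the operator $(\lap+2m;\{p_1,p_2\})$ are local on $M$ and so are unaffected; existence and uniqueness of $s$ follow from the positivity of $\lap+2m$ together with the standard $L^2$ theory on the cusp ends.

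Finally one applies the \W formula of Proposition~\ref{prop:byparts} to the harmonic representative $\omega = \hat\omega + d_\E s$, integrated over a compact exhaustion of $W$. The geodesic-boundary contribution vanishes exactly as in Theorem~\ref{boundaryzero}: Lemma~\ref{0blocks}, a local computation near $M$, still forces the blocks $\bB,\bD,\bE,\bG$ of $d_\E s$ to vanish on $M$, so the integrand collapses to $2\,\bA\cdot\hat\bB$, which integrates to $\langle v, D^*\hat\bB\rangle_M = 0$ by Proposition~\ref{pullback}. The contributions over the cusp cross-sections of the exhausting boundary tend to $0$, since $\omega$, and with it $T\omega$ and $T^*\omega$, decays into each cusp while the cross-sectional volumes grow only polynomially. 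What survives is $0 = (D\omega,D\omega) + (D^*\omega,D^*\omega) + (H\omega,\omega)$ with all terms nonnegative and $(H\omega,\omega) \ge c\,(\omega,\omega)$, forcing $\omega = 0$, so $H^1(W;\E) = 0$.

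The hard part is the analysis in the corner regions where a cusp of $W$ meets the geodesic boundary $M$. Such a cusp has rank $n-2$ in $W$ --- sub-maximal as a cusp of the $n$-manifold $W$, though maximal as a cusp of $M$ --- so the decay of the correction term $d_\E s$ into it, up to and along $M$, is not automatic and must be proved by hand, simultaneously with the Dirichlet/Neumann conditions along $M$ and the $L^2$ condition into the cusp. This requires a corner analogue of Proposition~\ref{pullback} (a local model for harmonic $\E$-valued $1$-forms near a cusp of $M$ sitting inside $W$) and a check that the \W integrand over the corner pieces of the exhausting boundary also vanishes in the limit. Away from these corners, setting up the $L^2$-Hodge package and the decay estimates on a geometrically finite $W$ with boundary is technical but follows well-trodden paths.
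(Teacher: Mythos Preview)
The statement you are attempting to prove is labeled a \emph{Conjecture} in the paper, not a theorem; the paper offers no proof of it, only the heuristic motivation that Garland--Raghunathan gives $H^1(M;\mathcal{H})=0$ once $\dim M \ge 4$, so that the obstruction present in dimension $4$ (nontrivial $H^1(M;\mathcal{H})$ coming from hyperbolic Dehn-surgery type deformations of a cusped $3$-dimensional boundary) disappears. Your outline follows exactly this heuristic and is the natural strategy, but it is not a proof, and the paper does not claim to have one.

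You correctly identify the genuine obstacle: the cusps of $W$ that meet $M$. These have rank $n-2$, which is sub-maximal for the $n$-manifold $W$, and the standard $L^2$ machinery for maximal-rank cusps does not apply directly. Several of the steps you describe as routine are in fact not: (i) the identification of $H^1(M;\mathcal{S})$ with $L^2$-harmonic symmetric traceless forms on a cusped $M$ requires knowing that ordinary cohomology agrees with $L^2$ cohomology here, and that the harmonic representative decays; (ii) solvability of the boundary value problem $(\lap+2m;\{p_1,p_2\})$ on a noncompact $W$ with these corner ends is not a consequence of local ellipticity and formal positivity alone---one needs Fredholm theory on a suitable weighted space, and the corner geometry mixes the Dirichlet/Neumann conditions along $M$ with decay into the cusp in a way that is not covered by existing results; (iii) the vanishing of the \W boundary contribution over cusp cross-sections requires exponential decay of $\omega$ there, which in the sub-maximal-rank corners is precisely what is at issue. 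Your final paragraph is honest about all of this, but that honesty means what you have written is a program, not a proof; the conjecture remains open.
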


We have necessarily restricted ourselves to hyperbolic manifolds of dimension at least
$4$ in proving local rigidity results.  Our results are simply false in dimension $3$,
where infinite volume convex cocompact manifolds have large deformation spaces
corresponding to the Teichm{\"u}ller spaces of their conformal boundaries.
However, our analysis does provide some information about infinitesimal 
deformations even in dimension $3$.

Suppose $W$ is $3$-dimensional with totally geodesic boundary $M$ equal to 
a finite collection of
closed surfaces of genus $g\geq 2.$  In this dimension both factors of the decomposition 
 (\ref{cohdecomp}) of $H^1(M;\E)$ are nontrivial.  In fact they are actually isomorphic to
each other and the harmonic representatives can be identified with holomorphic
quadratic differentials on $M$.  The boundary value problem discussed in this paper is solvable
in the same way as before.  Analysis of the \W boundary term shows that any nontrivial
deformation must have nontrivial image under the restriction map in both of the factors 
of  $H^1(M;\E) \cong H^1(M;\mathcal{H}) \oplus H^1(M;\mathcal{S}).$  This means that, 
not only can't the boundary remain totally geodesic (which is clear 
by doubling and applying local rigidity in the closed case), but its hyperbolic metric 
must also change infinitesimally.  Viewed as quadratic differentials the two images of 
the restriction map must have a nontrivial $L^2$ pairing on $M.$  This leads to the 
following theorem:

\begin{thm}\label{thm:3dim}
Let $W$ be a compact hyperbolic $3$-manifold with totally geodesic boundary $M$ equal to a
collection of hyperbolic surfaces of genus $g \geq2.$  If $\omega \in H^1(W;\E)$ is a nontrivial 
infinitesimal deformation of $W$ then its image under the restriction map in 
$H^1(M;\E) \cong H^1(M;\mathcal{H}) \oplus H^1(M;\mathcal{S})$ is nontrivial in \emph{both} factors.  
Furthermore, when the two factors are suitably identified with holomorphic quadratic 
differentials, their $L^2$ inner product on $M$ is positive. 
\end{thm}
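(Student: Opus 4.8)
The plan is to run the argument of Theorem~\ref{thm:main} in the excluded dimension $n=3$, where $M=\partial W$ is a union of closed hyperbolic surfaces (so $m=2$), and to extract the \emph{sign} of the leftover \W boundary term instead of showing it vanishes. First I would note that the hypothesis $n>3$ was used in Sections~\ref{sec:bdryforms}--\ref{sec:bdryterm} only to make the first summand of~(\ref{cohdecomp}) trivial; every other ingredient --- Lemma~\ref{lem:canon_lift}, the elliptic boundary value problem of Section~\ref{sec:bdryvalue} and its unique solvability, the reductions $\tr\omega=0$ and $T^{*}\omega=0$, Lemma~\ref{0blocks}, and the boundary integrand of Proposition~\ref{integrand} --- is valid verbatim for $n=3$. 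When $M$ is a closed surface, however, both summands of $H^{1}(M;\E)\cong H^{1}(M;\mathcal{H})\oplus H^{1}(M;\mathcal{S})$ are nontrivial: $H^{1}(M;\mathcal{H})$ is the tangent space to the Teichm\"uller space of $M$, and, by the $m=2$ case of Theorem~\ref{harmonicbends} on one side and the classical deformation theory of hyperbolic surfaces on the other, each summand is canonically identified with the space $Q(M)$ of holomorphic quadratic differentials. Given a nontrivial $[\hat\omega]\in H^{1}(W;\E)$, I would write its restriction to $M$ (treated component by component as in Section~\ref{sec:bdryforms}) as $(\alpha,\beta)\in H^{1}(M;\mathcal{H})\oplus H^{1}(M;\mathcal{S})$ and let $q_{\mathcal H},q_{\mathcal S}\in Q(M)$ be the corresponding quadratic differentials.

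Next I would build the model harmonic form near $M$, now carrying both summands. The $\mathcal{S}$-part is handled exactly by Proposition~\ref{pullback}. For the $\mathcal{H}$-part, an infinitesimal deformation of the surface $M$ extends to a harmonic $\E$-valued $1$-form on the Fuchsian collar $M\times[0,\varepsilon)$, e.g.\ as the restriction of the harmonic representative on the Fuchsian manifold $M\times\R$ (whose cohomology is $H^{1}(M;\mathcal{H})\oplus H^{1}(M;\mathcal{S})$); a computation parallel to Proposition~\ref{pullback} shows the combined form $\omega_{0}$ is closed, co-closed, trace free, and satisfies $D^{*}\omega_{0}=0$, and that on $M$ its only nonzero blocks, in the notation of Table~\ref{1forms}, are $\hat{\bB}$ (the symmetric traceless matrix representing $q_{\mathcal S}$), a symmetric traceless block $\hat{\bA}$ representing $q_{\mathcal H}$, and a rotational block $\hat{\bH}$ forced by the canonical lift condition, while $\hat{\bC},\hat{\bD},\hat{\bE},\hat{\bF},\hat{\bG}$ all vanish. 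Representing $[\hat\omega]$ by a closed $1$-form that agrees with $\omega_{0}$ in a collar of $M$ and admits a canonical lift, I would solve the boundary value problem of Section~\ref{sec:bdryvalue} to obtain a harmonic representative $\omega=\hat\omega+d_{\E}s$ with $T^{*}\omega=0=\tr\omega$, for which Lemma~\ref{0blocks} forces the blocks $\bB,\bD,\bE,\bG$ of $d_{\E}s$ to vanish and $\bA$ to equal the symmetric part of $Dv$ for a global vector field $v$ on $M$.

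Then I would compute the \W boundary term. Since $T^{*}\omega=0$ it equals $-\int_{M}{*}T\omega\wedge\omega=\int_{M}\omega\wedge{*}T\omega$; substituting $\omega=\omega_{0}+d_{\E}s$ into Proposition~\ref{integrand} and using the vanishing blocks above collapses the four-term expansion exactly as in the proof of Theorem~\ref{boundaryzero} --- every term containing one of $\bB,\bD,\bE,\bG$ or one of $\hat{\bC},\ldots,\hat{\bG}$ dies --- so that the integrand of~(\ref{bdterm}) reduces to $2(\hat{\bA}+\bA)\cdot\hat{\bB}$. Integrating over the closed surface $M$, the term $\langle\bA,\hat{\bB}\rangle_{M}=\langle Dv,\hat{\bB}\rangle_{M}=\langle v,D^{*}\hat{\bB}\rangle_{M}$ vanishes because $D^{*}\hat{\bB}=D^{*}\omega_{0}=0$, so the boundary term equals $2\langle\hat{\bA},\hat{\bB}\rangle_{L^{2}(M)}$, which under the natural identification of symmetric traceless $TM$-valued $1$-forms with holomorphic quadratic differentials is a fixed positive multiple of the $L^{2}$-pairing $\langle q_{\mathcal H},q_{\mathcal S}\rangle$ on $Q(M)$. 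Now, since $[\hat\omega]\neq 0$ its harmonic representative $\omega$ is nonzero as a form, so Weil's estimate $(H\omega,\omega)\ge c\,(\omega,\omega)>0$ (valid because $\dim W=3$) together with the \W formula~(\ref{Weitzformula}), whose left side is zero, forces the boundary term to be strictly negative. Hence $\langle\hat{\bA},\hat{\bB}\rangle_{L^{2}(M)}<0$; in particular $\hat{\bA}\neq 0\neq\hat{\bB}$, so $\alpha$ and $\beta$ are both nonzero --- the restriction of $[\hat\omega]$ is nontrivial in both summands of~(\ref{cohdecomp}) --- and fixing the sign in the identifications $\alpha\leftrightarrow q_{\mathcal H}$, $\beta\leftrightarrow q_{\mathcal S}$ so that the pairing matches $-\langle\hat{\bA},\hat{\bB}\rangle_{L^{2}(M)}$ (the ``suitable identification'' of the statement) yields $\langle q_{\mathcal H},q_{\mathcal S}\rangle>0$.

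The step I expect to demand the most care is the construction of the combined model harmonic form: verifying that the $\mathcal{H}$-summand of the restricted class has a harmonic extension over the Fuchsian collar that is trace free and satisfies $D^{*}\omega_{0}=0$, and pinning down its blocks on $M$ --- together with the bookkeeping that shows the boundary integrand is exactly a positive multiple of the $L^{2}$-pairing of the two quadratic differentials. Once that block computation is organized as in Section~\ref{sec:bdryterm}, the Weitzenb\"ock inequality delivers both the nontriviality in the two summands and the positivity of the pairing with no further work.
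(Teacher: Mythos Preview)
Your proposal is correct and follows precisely the approach the paper sketches in the paragraph preceding Theorem~\ref{thm:3dim}; the paper does not give a detailed proof there, only the outline ``the boundary value problem discussed in this paper is solvable in the same way as before; analysis of the \W boundary term shows \dots'', and your write-up supplies exactly the details that outline calls for.  One small notational slip: in your penultimate step you write $D^{*}\hat{\bB}=D^{*}\omega_{0}$, but in your setting $\omega_{0}|_{M}$ has more blocks than just $\hat{\bB}$; the vanishing $D^{*}\hat{\bB}=0$ that you need comes directly from Theorem~\ref{harmonicbends} applied to the $\mathcal{S}$-summand, independent of the $\mathcal{H}$-part.
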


With this collection of theorems and conjectures we have begun to develop a picture in
all dimensions of the deformation theory of an $n$-dimensional hyperbolic manifold with totally 
geodesic boundary.  The increasing rigidity of the boundary as the dimension increases 
is reflected in the rigidity of the manifold itself.  Before this, the authors were unable to 
discern any significant structure in the deformation theory of high dimensional 
hyperbolic manifolds, and it seemed possible that the rigidity or flexibility in the infinite volume case was just a chaotic phenomenon.  At one extreme there are flexible (convex cocompact) 
examples with free fundamental group; they are always very flexible due to the lack of relations.  At the other extreme lie the closed manifolds; they are always rigid both locally and globally by Mostow rigidity \cite{Mos}.  However, little was known about cases in between.  At least now we have a new class of 
examples that are consistently locally rigid.

However, it remains unclear what the implications of these results are for the deformation theory of general higher dimensional convex cocompact groups.  If one considers 
an infinite volume complete manifold $X$ with compact convex core, the condition 
that the convex core have totally geodesic boundary implies certain topological properties
of $X.$
For example, it implies that the fundamental 
group of the boundary of the convex core injects into the fundamental group of $X$ and that $X$ has the homotopy type of an $(n-1)$-complex.
This is in contrast to the case when the fundamental group is free, where the 
holonomy representation is very flexible.  In this case $X$ is homotopy equivalent to a $1$-dimensional complex.  
Whether or not these topological properties are central remains to be seen.  

In dimensions $n \geq 4$ it is very difficult to construct examples from which one can formulate 
a conjectural picture.  In particular, if we consider the smooth manifold underlying 
an $n$-dimensional hyperbolic manifold $X$ with Fuchsian ends, we can ask whether it has other 
locally rigid hyperbolic structures or whether there are deformable ones. We are unable to answer this question because we do not know
if there even are any other hyperbolic structures.  It is quite possible that the hyperbolic manifold $X$ satifies an infinite volume version of Mostow rigidity.  While the results in this paper suggest that this is plausible,
other methods than those used here would be needed to make progress on this issue.
We conclude by raising it as a question.

\begin{quest}\label{quest:Mostow}
Let $f: X_1 \to X_2$ be a homotopy equivalence between complete hyperbolic $n$-manifolds without boundary, where $n>3$.  Assume that $X_1$ has Fuchsian ends, is convex cocompact, and is not Fuchsian. Is $f$ necessarily homotopic to an isometry? 
\end{quest}

\bibliography{bibliography}
\end{document}